\documentclass{article}
\usepackage{amsmath,amssymb,amsthm,stmaryrd,mathrsfs}

\usepackage{color,graphicx}
\usepackage[dvipsnames]{xcolor}
\usepackage{accents}
\usepackage{enumerate}
\usepackage{varwidth}
\usepackage{enumitem}

\usepackage{fancyhdr, array,calc,graphicx,url,tabularx}
\usepackage{geometry}
\usepackage{latexsym}
\usepackage{amssymb}
\usepackage{amsmath}
\usepackage{enumerate}
\usepackage{verbatim}
\usepackage{tikz}
\usepackage[colorlinks=true,linkcolor=blue]{hyperref}

\usepackage{changepage}

\newtheorem{theorem}{Theorem}[section]

\newtheorem{definition}[theorem]{Definition}
\newtheorem{lemma}[theorem]{Lemma}
\newtheorem{corollary}[theorem]{Corollary}

\newcommand{\R}{\mathbb{R}}
\newcommand{\Q}{\mathbb{Q}}
\newcommand{\ADR}{\mathsf{AD}_{\mathbb{R}}}

\newcommand{\AD}{\mathsf{AD}}
\newcommand{\AC}{\mathsf{AC}}
\newcommand{\DC}{\mathsf{DC}}

\newcommand{\pmax}{\mathbb{P}_{\mathrm{max}}}

\begin{document}

\title{The Cofinal Strong Chang Conjecture from Models of Determinacy}
\author{Lagadec Corentin \thanks{The author’s work is funded by the National Science Center, Poland (project number 2023/50/A/ST1/00258) and partially supported by the Simons Foundation grant (award no. SFI-MPS-T-Institutes-00010825) and from State Treasury funds as part of a task commissioned by the Minister of Science and Higher Education under the project “Organization of the Simons Semesters at the Banach Center - New Energies in 2026-2028” (agreement no. MNiSW/2025/DAP/491).}}

\maketitle

\begin{abstract}
    In chapter 9 of [6], Woodin shows how to force the Strong Chang Conjecture over models of determinacy using $\pmax$. We show here how a modification of the proof implies that such extensions actually verify the stronger cofinal version of the conjecture. This stronger version has important consequences on the semi-properness of small forcing, allowing us to prove the consistency of the theory "ZFC + Namba forcing is semiproper + $\Theta^{UB}=\omega_3$". We then use the constructions of this proof to also show that Woodin $(\ast)_{UB}$ axiom implies the conjecture.
\end{abstract}

\section{Chang Conjecture}

In the rest of this paper the models considered will always verify "ZF + DC", this will be enough to make sense of countable elementary submodels which we need for the Strong Chang Conjecture. We start with some combinatorial properties of elementary submodels needed to handle the basics of the conjecture, we refer to [1] for the details.

\begin{definition}
    Let $M\prec H_\theta$, for any $Y\in H_\theta$ pose $M[Y]=\{f(x)\ |\ f\in M, x\in Y^{<\omega}\cap dom(f) \}$ and $M(\beta)=M[\{\beta \}]$. 
\end{definition}

\begin{lemma} \label{esm}
    If $\DC_{<\theta}$ holds, then $M[Y]\prec H_\theta$ and for any $N\prec H_\theta$, if $M,Y\subseteq N$ then $M[Y]\subseteq N$.
\end{lemma}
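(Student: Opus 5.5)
We prove $M[Y]\prec H_\theta$ with the Tarski--Vaught criterion. Suppose $\varphi(v,a_1,\dots,a_n)$ is a formula with parameters $a_1,\dots,a_n\in M[Y]$, and suppose $H_\theta\models\exists v\,\varphi(v,\vec a)$. We must find a witness inside $M[Y]$.

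Write each $a_i=f_i(x_i)$ with $f_i\in M$ and $x_i\in Y^{<\omega}\cap dom(f_i)$. Let $x=x_1{}^\frown\cdots{}^\frown x_n\in Y^{<\omega}$, the concatenation of these finite sequences. Inside $H_\theta$ define the function $g$ on finite sequences $z$ of the right total length by
\[
g(z)=\text{some } v \text{ with } \varphi\bigl(v,f_1(z_1),\dots,f_n(z_n)\bigr), \text{ when such a } v \text{ exists,}
\]
where $z_1,\dots,z_n$ are the pieces of $z$ with the lengths of $x_1,\dots,x_n$. The definition of $g$ uses only the parameters $f_1,\dots,f_n\in M$ and natural numbers. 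So if such a $g$ exists in $H_\theta$, elementarity of $M$ gives one in $M$. Then $g(x)\in M[Y]$ is the required witness.

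The main obstacle is that $g$ is a choice function, and without $\AC$ we must justify its existence in $H_\theta$. This is exactly where $\DC_{<\theta}$, or the relevant choice hypothesis, is used.

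One remedy is to restrict the domain of $g$ to $Y^{<\omega}$, or to a well-orderable set. When $Y$ is well-orderable of size $<\theta$ (the intended case, e.g. $Y=\{\beta\}$ or $Y\subseteq$ ordinals), choice over $Y^{<\omega}$ follows from $\DC_{<\theta}$ (indeed $\AC_{<\theta}$). The catch is that $g$ then depends on $Y$, which is not in $M$. So I would instead try the whole domain $\bigcup_i dom(f_i)$, using that $H_\theta$ and its relevant domains are well-orderable in the intended applications.

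Alternatively, since $dom(f_i)$ may be any set in $H_\theta$, I would fall back on taking $g$ to be a choice function only on a set from $M$. We may first replace each $f_i$ by a function in $M$ whose domain is an ordinal $<\theta$: this is possible whenever $dom(f_i)$ is well-orderable in $H_\theta$, and the composition of $f_i$ with a bijection in $M$ stays in $M$. Once all domains are ordinals, $g$ is definable outright. Let $g(z)$ be a witness of minimal rank, with the collection of all such witnesses at that rank taken as a set-valued function. Then use $\DC_{<\theta}$ to pick a uniformizing function on the well-ordered domain $\gamma^{<\omega}$, which has size $<\theta$.

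The second clause is direct. If $M,Y\subseteq N\prec H_\theta$, then any $f\in M$ and $x\in Y^{<\omega}\cap dom(f)$ lie in $N$. Here $x\in N$ because $N$ is closed under forming finite sequences of its elements, by elementarity. Since $f(x)$ is definable from $f$ and $x$, we get $f(x)\in N$. Hence $M[Y]\subseteq N$. The first clause also needs that $M[Y]$ contains $M$ and $Y$, which holds via identity and constant functions, which lie in $M$ by elementarity.
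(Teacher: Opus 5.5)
Your architecture is the standard one; the paper gives no proof and defers to [1]. It is Tarski--Vaught with a Skolem-type function $g\in M$ on tuples whose pieces lie in the $dom(f_i)$, evaluated at the concatenation $x\in Y^{<\omega}$. Your argument for the second clause is correct and uses no choice. The gap is in the only step where $\DC_{<\theta}$ matters: you never establish that a suitable $g$ exists in $H_\theta$. You make it conditional on $dom(f_i)$ being well-orderable ``in the intended applications'' or ``whenever'' this holds. Two of the surrounding claims are also off:
\begin{itemize}
\item Once the domains are ordinals, $g$ is not ``definable outright'': you still have to choose among witnesses, as your next sentence concedes.
\item If you reindex the $f_i$ through bijections $b_i\in M$, the resulting $g$ lives on ordinal tuples. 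It must be composed back with the $b_i^{-1}$ before it can be evaluated at $x$.
\end{itemize}

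The missing observation is that well-orderability is automatic. In $\mathsf{ZF}$, $H_\theta$ is the class of sets whose transitive closure injects into an ordinal below $\theta$. So the domain $D\in H_\theta$ of $g$ is itself well-orderable, of some size $\kappa<\theta$, and no reindexing is needed. Then $\DC_\kappa$ yields $\AC_\kappa$. For each $z\in D$ admitting a witness, this selects an element of the (set-sized) collection of minimal-rank witnesses, together with a well-ordering of its transitive closure. For regular $\theta$ (e.g.\ $\theta=\omega_3$ under $\omega_2$-$\DC$, as in the applications), this shows $g\in H_\theta$. Hence the statement ``there is $g$ on $D$ with $\varphi(g(z),f_1(z_1),\dots,f_n(z_n))$ whenever a witness exists'' holds in $H_\theta$. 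Its parameters $f_1,\dots,f_n$ lie in $M$, so elementarity puts such a $g$ in $M$.

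This point is not cosmetic. $\DC_{<\theta}$ only gives choice over well-orderable index sets of size $<\theta$. Over an index set such as $\R$ in a model of $\AD$, uniformizing functions need not exist at all.

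Lastly, $M\cup Y\subseteq M[Y]$ plays no role in the first clause. Moreover, $Y\subseteq M[Y]$ would need $Y$ to be covered by some set in $M$, and the lemma does not assert it.
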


Although the next result is often used in a ZFC context we remark that it is always possible to just force choice over the models we use without adding countable sequences. In this situation if a set is (projective) stationary in a generic extension  then this set had to be (projective) stationary to begin with in the ground model.

\begin{theorem} \label{projective stationary}
    For any $\delta \geq \omega_2 $ and any structure U on $H_{\delta}$, the set \{$M\in [H_{\delta}]^\omega$ $|\ M\prec U$ and for cofinally many $\alpha\in \omega_2 - M$, $M(\alpha)\cap \omega_1 = M\cap \omega_1$\} is projective stationary.
\end{theorem}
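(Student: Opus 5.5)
We argue in ZFC: by the remark preceding the statement we may force choice without adding countable sequences, and projective stationarity then transfers back to the ground model. Recall that a set $S\subseteq[H_\delta]^\omega$ is projective stationary if for every stationary $T\subseteq\omega_1$ the set $\{M\in S\mid M\cap\omega_1\in T\}$ is stationary. So fix a stationary $T\subseteq\omega_1$ and a function $F:[H_\delta]^{<\omega}\to H_\delta$. We must produce a countable $M\prec U$, closed under $F$, with $M\cap\omega_1\in T$, and such that for cofinally many $\alpha\in\omega_2\setminus M$ we have $M(\alpha)\cap\omega_1=M\cap\omega_1$. Here $M(\alpha)$ is computed as in the definition, using functions in $M$ viewed as elements of some $H_\theta\supseteq H_\delta$. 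We will work inside a larger $H_\theta$ whose language includes $U$, $F$, $T$ and a well-order; only the traces on $H_\delta$ matter.

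\textbf{Step 1 (fixing the countable ordinal).} Build a continuous $\in$-chain $\langle N_\xi\mid\xi<\omega_1\rangle$ of countable elementary submodels of $(H_\theta,\in,U,F,T,\lhd)$. The set of $\xi$ with $N_\xi\cap\omega_1=\xi$ is club, so we may pick such a $\xi\in T$. Put $N=N_\xi$ and $\gamma=\xi=N\cap\omega_1$. Then $M_0=N\cap H_\delta$ is closed under $F$, is elementary for $U$, and has $M_0\cap\omega_1\in T$. Going to the larger model $N$ is harmless, because for $\alpha\in H_\delta$ the model $M_0(\alpha)$ is contained in $N(\alpha)\cap H_\delta$.

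\textbf{Step 2 (the key counting argument).} We show that the set $A=\{\alpha<\omega_2\mid N(\alpha)\cap\omega_1=\gamma\}$ is unbounded in $\omega_2$; this is the main point. Suppose instead that $A\subseteq\beta$ for some $\beta<\omega_2$. Then for every $\alpha\in[\beta,\omega_2)$ there are $f\in N$ with $f:\omega_2\to\omega_1$ (after coding finite tuples, using that $N$ is closed under pairing and contains $\alpha$-independent parameters) such that $f(\alpha)\geq\gamma$. There are only countably many $f\in N$, and $\omega_2$ is regular of uncountable cofinality. So some single $f\in N$ satisfies $f(\alpha)\geq\gamma$ for $\omega_2$-many $\alpha$, which means $\{\alpha<\omega_2\mid f(\alpha)\geq\gamma\}$ is unbounded.

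This alone is not a contradiction, so the choice of $N$ must be refined. We use the chain instead: for each $\eta$, the set $\{\alpha\mid f(\alpha)\ge\eta\}$ is definable from $f$ and $\eta$. For each $f\in N_{\omega_1}=\bigcup_\xi N_\xi$ (a set of size $\aleph_1$), define $g_f(\eta)=$ the least $\eta'$ such that $\{\alpha<\omega_2\mid \eta\le f(\alpha)<\eta'\}$ is unbounded in $\omega_2$, or $0$ if there is none. Each $g_f$ lies in the appropriate $N_\xi$ and is computed there by elementarity. Now consider the clubs where $N_\xi$ is closed under all $g_f$ with $f\in N_\xi$. At such a $\xi=\gamma$, an unbounded set of $\alpha$ with $f(\alpha)\ge\gamma$ forces $\omega_2$-many $\alpha$ with $f(\alpha)$ in a fixed countable interval $[\gamma,\eta')$, since $\omega_1$ is smaller than $\mathrm{cf}(\omega_2)$. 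To finish, we choose $\alpha$ outside the definable closure. Equivalently, we use the standard fact (as in Foreman–Magidor–Shelah, [1]) that for each countable $N$ and each $\beta<\omega_2$, some $\alpha\in(\beta,\omega_2)$ satisfies $\sup(N(\alpha)\cap\omega_1)=\sup(N\cap\omega_1)$. The proof of that fact is a pressing-down argument on the $\omega_2$-many candidates $\alpha$ against the countably many $f\in N$, applied inside the chain so that the relevant $f$ and $\eta'$ are captured by $N$. This is exactly where I expect the main obstacle. The cleanest route I would try first is to enumerate $N=\{f_n\}$ and define the sets $X_n=\{\alpha\mid f_n(\alpha)<\gamma\}$. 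We then show by induction, using elementarity of $N_{\xi+1}\ni N_\xi$ together with regularity of $\omega_2$, that $\bigcap_n X_n$ is unbounded.

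\textbf{Step 3 (conclusion).} Take $M=M_0$. Lemma \ref{esm} gives $M(\alpha)\prec H_\delta$, and for $\alpha\in A\setminus M$ we have $M(\alpha)\cap\omega_1\subseteq N(\alpha)\cap\omega_1=\gamma=M\cap\omega_1$. Since $A$ is cofinal and $M\cap\omega_2$ is countable, cofinally many such $\alpha$ lie outside $M$. Hence $M$ lies in the set from the statement, is closed under $F$, and has $M\cap\omega_1\in T$. As $T$ and $F$ were arbitrary, this establishes projective stationarity.
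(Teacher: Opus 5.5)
The paper does not prove this statement itself (it defers to [1]), so I am judging your argument on its own terms. Steps 1 and 3 are fine, and a continuous $\in$-chain $\langle N_\xi : \xi<\omega_1\rangle$ is the right framework. However, Step 2 (which you rightly call the main point) is never established, and the fact you finally lean on is false. Suppose every countable $N$ had, above every $\beta<\omega_2$, some $\alpha$ with $N(\alpha)\cap\omega_1=N\cap\omega_1$. Then every countable elementary submodel would be cofinally end-extendable, so $SCC^{cof}$ would be a theorem of ZFC. But $SCC^{cof}$ implies Chang's Conjecture, which fails in $L$.

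Your other attempts do not close the gap either. Each $X_n=\{\alpha : f_n(\alpha)<\gamma\}$ is unbounded, but countably many unbounded subsets of $\omega_2$ can have empty intersection. Closing $N_\xi$ under the functions $g_f$ tells you nothing about $g_f(\gamma)$, since $\gamma\notin N_\xi$. The underlying problem is the order of quantifiers: you fix $\xi\in T$ first and then try to show that this particular $N_\xi$ is good, and nothing in the construction guarantees that.

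The missing idea is to fix $\alpha$ first. Let $C=\{\xi : N_\xi\cap\omega_1=\xi\}$ and fix $\alpha<\omega_2$. The sets $N_\xi(\alpha)$ are countable, increase with $\xi$, are continuous at limits (because $N_\xi=\bigcup_{\zeta<\xi}N_\zeta$ there), and contain $N_\xi$. So at every $\xi\in C$ that is a closure point of $\zeta\mapsto\sup(N_\zeta(\alpha)\cap\omega_1)$, you get $N_\xi(\alpha)\cap\omega_1=\xi$. Hence $D_\alpha=\{\xi\in C : N_\xi(\alpha)\cap\omega_1=\xi\}$ contains a club.

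Now let $E$ be the set of $\xi\in C$ for which $A_\xi=\{\alpha<\omega_2 : N_\xi(\alpha)\cap\omega_1=\xi\}$ is bounded. Suppose $E$ were stationary. Then you could pick $\xi_\alpha\in D_\alpha\cap E$ for every $\alpha<\omega_2$. Since $\alpha\mapsto\xi_\alpha$ maps $\omega_2$ into $\omega_1$, some $\xi\in E$ equals $\xi_\alpha$ for $\aleph_2$ many $\alpha$. All of these $\alpha$ lie in $A_\xi$, so $A_\xi$ is unbounded, a contradiction. So $E$ is nonstationary. Choose $\xi\in(T\cap C)\setminus E$ in Step 1, and your Step 3 then goes through unchanged.
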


Here we introduce some weak form of the Chang Conjecture used in the proof. The idea of the proof is essentially that in suitable generic extensions of models of determinacy the following weak versions of the Chang Conjecture reflects to the strong versions. 

\begin{definition}
    $WCC^{+}$ : For any $F : \omega_2^{<\omega} \rightarrow \omega_2 $, there is  $G : \omega_2^{<\omega} \rightarrow \omega_2 $ such that for any $X\in [\omega_2]^{\omega}$, if X is closed by G then there is $Y\in [\omega_2]^{\omega}$ such that :
    \begin{enumerate}
    \item X $\subsetneq$ Y.
    \item $X \cap \omega_1 = Y \cap \omega_1$.
    \item Y is closed by F.
    \end{enumerate}
\end{definition}

\begin{definition}
    $WCC^{+}_{cof}$ : For any $F : \omega_2^{<\omega} \rightarrow \omega_2 $, there is  $G : \omega_2^{<\omega} \rightarrow \omega_2 $ such that for any $X\in [\omega_2]^{\omega}$, if X is closed by G then for any $\alpha < \omega_2$ there is $Y\in [\omega_2]^{\omega}$ such that :
    \begin{enumerate}
    \item X $\subsetneq$ Y.
    \item sup Y $> \alpha$.
    \item $X \cap \omega_1 = Y \cap \omega_1$.
    \item Y is closed by F.
    \end{enumerate}
\end{definition}

Here are the stronger versions of the conjecture we are interested into proving.

\begin{definition}
    Strong Chang Conjecture (SCC) : there is club many countable $M \prec H_{\omega_3}$ such that there is a countable $N \prec H_{\omega_3}$ such that : 
    \begin{enumerate}
    \item M $\subseteq$ N.
    \item $M\cap \omega_2 \neq N\cap \omega_2$.
    \item $N \cap \omega_1 = M \cap \omega_1$.
    \end{enumerate}
\end{definition}

\begin{definition}
    Strong Chang Conjecture cofinal $(SCC^{cof})$ : there is club many countable $M \prec H_{\omega_3}$ such that for cofinally many $\alpha$ in $\omega_2$ there is a countable $N \prec H_{\omega_3}$ such that : 
    \begin{enumerate}
    \item M $\subseteq$ N.
    \item $M\cap \omega_2 \neq N\cap \omega_2$.
    \item sup $N\cap\omega_2 > \alpha$.
    \item $N \cap \omega_1 = M \cap \omega_1$.
    \end{enumerate}
\end{definition}

This next result show that by lifting to higher $H_\theta$ we can obtain that ANY submodel can be end-extended.

\begin{theorem} $(\AC)$
    Let $\theta$ be a cardinal such that $H_{\omega_3}\in H_\theta$ and $\lhd$ be a well ordering of $H_\theta$, then SCC if equivalent to :
    For ANY countable $M \prec \langle H_\theta,\in,\lhd \rangle$, there is a countable $N \prec \langle H_\theta,\in,\lhd \rangle$ such that :
    \begin{enumerate}
    \item M $\subseteq$ N.
    \item $M\cap \omega_2 \neq N\cap \omega_2$.
    \item $N \cap \omega_1 = M \cap \omega_1$.
    \end{enumerate}
\end{theorem}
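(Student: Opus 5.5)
The plan is to prove the two directions separately. The backward direction only needs that a model of the large structure cuts down to a model of $H_{\omega_3}$. The forward direction uses the club in SCC to find one new ordinal $\beta$, and then builds $N=M(\beta)$ with Lemma~\ref{esm}.

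\emph{The property for all $M$ implies SCC.} Since $\omega_3$ and $H_{\omega_3}$ are definable in $\langle H_\theta,\in,\lhd\rangle$, every $M\prec\langle H_\theta,\in,\lhd\rangle$ contains $H_{\omega_3}$ as an element. Hence $M\cap H_{\omega_3}\prec H_{\omega_3}$.
\begin{itemize}
\item The set $\{M\cap H_{\omega_3} : M\prec \langle H_\theta,\in,\lhd\rangle \text{ countable}\}$ contains a club in $[H_{\omega_3}]^\omega$, because the projection of a club is club (using $\AC$).
\item Given such an $M$, take $N$ as in the hypothesis and set $N'=N\cap H_{\omega_3}\prec H_{\omega_3}$.
\item Then $M\cap H_{\omega_3}\subseteq N'$, and $N'\cap\omega_2=N\cap\omega_2\neq M\cap\omega_2$.
\item Finally $N'\cap\omega_1=N\cap\omega_1=M\cap\omega_1$, so $M\cap H_{\omega_3}$ witnesses SCC.
\end{itemize}

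\emph{SCC implies the property for all $M$.} Fix an arbitrary countable $M\prec\langle H_\theta,\in,\lhd\rangle$.
\begin{itemize}
\item By SCC and Kueker's theorem (under $\AC$), there is a function $f:H_{\omega_3}^{<\omega}\to H_{\omega_3}$ such that every countable $X\prec H_{\omega_3}$ closed under $f$ lies in the SCC club.
\item Let $f$ be the $\lhd$-least such function. It is definable in $\langle H_\theta,\in,\lhd\rangle$ from $H_{\omega_3}$, so $f\in M$.
\item Hence $M\cap H_{\omega_3}$ is closed under $f$, so it is in the club.
\item SCC then gives a countable $N_0\prec H_{\omega_3}$ with $M\cap H_{\omega_3}\subseteq N_0$, $N_0\cap\omega_1=M\cap\omega_1$, and some $\beta\in (N_0\cap\omega_2)\setminus M$.
\end{itemize}

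Now put $N=M(\beta)=M[\{\beta\}]$. By Lemma~\ref{esm}, $N\prec\langle H_\theta,\in,\lhd\rangle$. Clearly $M\subseteq N$, and $\beta\in (N\cap\omega_2)\setminus M$.

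The point to check is $N\cap\omega_1=M\cap\omega_1$. Take $\gamma=g(\beta)\in\omega_1$ with $g\in M$.
\begin{itemize}
\item Let $g'$ be the function on $\omega_2$ given by $g'(\xi)=g(\xi)$ when $\xi\in\mathrm{dom}(g)$ and $g(\xi)\in\omega_1$, and $g'(\xi)=0$ otherwise.
\item $g'$ is definable from $g$, so $g'\in M$ by elementarity.
\item Since $g':\omega_2\to\omega_1$, we have $g'\in H_{\omega_3}$, so $g'\in M\cap H_{\omega_3}\subseteq N_0$.
\item Both $g'$ and $\beta$ lie in $N_0\prec H_{\omega_3}$, so $\gamma=g'(\beta)\in N_0\cap\omega_1=M\cap\omega_1$.
\end{itemize}
This gives all three conditions.

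I expect the main obstacle to be the step that places $M\cap H_{\omega_3}$ in the SCC club for an arbitrary $M$. The club must be coded by a single function, and one has to see that the $\lhd$-least such code is definable and hence in $M$. This is exactly where the well-ordering $\lhd$ and $\AC$ are used.
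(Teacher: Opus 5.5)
The paper states this result without proof and defers to [1]. Your argument is correct and is the standard one, built from exactly the tools the paper imports for this purpose: Lemma~\ref{esm} and the models $M(\beta)$. The key step is handled properly: you pull $\gamma$ into $N_0$ via an $\omega_1$-valued function on $\omega_2$ that lies in $M\cap H_{\omega_3}\subseteq N_0$.

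Two cosmetic points:
\begin{itemize}
\item Elements of $M(\beta)$ have the form $g(\langle\beta,\dots,\beta\rangle)$ rather than $g(\beta)$. This is fixed by replacing $g$ with $\xi\mapsto g(\langle\xi,\dots,\xi\rangle)$, which is still in $M$.
\item Taking the $\lhd$-least $f$ is unnecessary. Elementarity of $M$ already supplies some such $f\in M$, so this is not really where $\lhd$ is needed.
\end{itemize}
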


The same is true for the cofinal version. 
\\
\\
The next result introduce an equivalent statement that will be easier to prove in the main theorem.

\begin{lemma} $(\omega_2-\DC)$
    Let $M_{0}=\{f : \omega_2^{<\omega} \rightarrow \omega_2\}\cup H_{\omega_2}$.
    Suppose that there are club many $X\in [M]^{\omega}$ such that there is $Y\in [\omega_2]^{\omega}$ such that : 
    \begin{enumerate}
        \item $X\cap \omega_2 \subsetneq Y$.
        \item $X\cap \omega_1 = Y\cap \omega_1$.
        \item for every $f\in X, f : \omega_2^{<\omega} \rightarrow \omega_2$, Y is closed by f.

    \end{enumerate}
    Then SCC holds.
\end{lemma}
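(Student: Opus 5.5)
The plan is to start from a countable $M \prec H_{\omega_3}$ whose trace $X = M\cap M_0$ lies in the given club. We use the $Y$ supplied by the hypothesis to build the end-extension $N = M[Y]$. The club of such $M$ is then the club witnessing SCC.

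\textbf{Step 1 (lifting the club).} Let $C\subseteq [M_0]^{\omega}$ be the club from the hypothesis. Since $M_0\subseteq H_{\omega_3}$, the set
$$D=\{M\in [H_{\omega_3}]^{\omega} : M\prec H_{\omega_3},\ M\cap M_0\in C\}$$
contains a club, by the standard fact that restriction maps clubs in $[H_{\omega_3}]^\omega$ onto sets containing clubs in $[M_0]^\omega$ and conversely. Concretely, I would fix a function $F:M_0^{<\omega}\to M_0$ whose closure points lie in $C$. I then take $M$ closed under $F$ and under Skolem functions for $H_{\omega_3}$, so that $M\cap M_0$ is closed under $F$. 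Producing these Skolem functions and the witnessing $F$ uses the choice available from $\omega_2$-$\DC$. Checking that this is enough is the point I expect to need the most care, though it is routine in the $\AC$ setting where SCC is usually read.

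\textbf{Step 2 (the extension).} Fix $M\in D$ and set $X=M\cap M_0\in C$. Note that $X\cap\omega_2=M\cap\omega_2$ and $X\cap\omega_1=M\cap\omega_1$, since $\omega_2\subseteq H_{\omega_2}\subseteq M_0$. Take $Y$ as given by the hypothesis and put $N=M[Y]$.

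By Lemma \ref{esm}, $N\prec H_{\omega_3}$ and $M\subseteq N$. Also $Y\subseteq N$, witnessed by the identity function on singletons, which lies in $M$. Since $Y\not\subseteq X\cap\omega_2=M\cap\omega_2$, we get $M\cap\omega_2\neq N\cap\omega_2$. Moreover $N$ is countable, because $M$ and $Y^{<\omega}$ are countable.

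\textbf{Step 3 (the key point: $N\cap\omega_1=M\cap\omega_1$).} Let $\xi\in N\cap\omega_1$, so $\xi=g(x)$ with $g\in M$ and $x\in Y^{<\omega}\cap \mathrm{dom}(g)$. Define $f:\omega_2^{<\omega}\to\omega_2$ by $f(s)=g(s)$ if $s\in\mathrm{dom}(g)$ and $g(s)\in\omega_2$, and $f(s)=0$ otherwise. This $f$ is definable from $g$ and $\omega_2$ in $H_{\omega_3}$, so $f\in M$. Since $f\in M_0$, it follows that $f\in X$.

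By clause 3 of the hypothesis, $Y$ is closed under $f$. Hence $\xi=f(x)\in Y\cap\omega_1=X\cap\omega_1=M\cap\omega_1$. The reverse inclusion is immediate from $M\subseteq N$. Therefore every $M\in D$ has the required $N$, and SCC holds.

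The main obstacle is Step 1 together with the use of Lemma \ref{esm} under only $\omega_2$-$\DC$. Step 3 is where the special form of $M_0$, namely functions $\omega_2^{<\omega}\to\omega_2$, is exactly what is needed.
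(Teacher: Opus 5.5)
Your proof is correct and follows the paper's argument: you lift the club to countable $M\prec H_{\omega_3}$ with $M\cap M_0$ in it, and take $N=M[Y]$. Your Step 3, which replaces $g\in M$ by a total $f:\omega_2^{<\omega}\to\omega_2$ lying in $M\cap M_0$, is the verification the paper leaves implicit when it asserts $N\cap\omega_2=Y$.
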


\begin{proof}
    Set $C_{0}$ the club of all X in $[M_{0}]^{\omega}$ for which there is Y that verify (1)-(3) and $C=\{M\in [H_{\omega_3}]^{\omega}\ |\ M\prec H_{\omega_3}\ and\ M\cap M_{0}\in C_{0}\}$ . We show that C witnesses SCC. 
    C is obviously a club.
    Recall that $M[Y]=\{f(x)\ |\ f\in M, x\in Y^{<\omega}\cap dom(f)\}$ and $M\prec M[Y]\prec H_{\omega_3}$.
    For any M in C, pick Y that witnesses $M\cap M_{0}\in C_{0}$, for N=M[Y] we have $N\cap \omega_2 = Y$ and thus :
    \begin{enumerate}
        \item $M \subseteq N$.
        \item $M\cap \omega_2 = X\cap \omega_2 \neq N\cap \omega_2 = Y $.
        \item $M\cap \omega_1 = X\cap \omega_1 = N\cap \omega_1 = Y \cap \omega_1 $.
    \end{enumerate}
\end{proof}

Adding a cofinal parameter uses the same proof :

\begin{lemma}$(\omega_2-\DC)$ \label{scc}
    Let $M_{0}=\{f : \omega_2^{<\omega} \rightarrow \omega_2\}\cup H_{\omega_2}$.
    Suppose that there are club many $X\in [M]^{\omega}$ such that for cofinally many $\alpha \in \omega_2$ there is $Y\in [\omega_2]^{\omega}$ such that : 
    \begin{enumerate}
        \item $X\cap \omega_2 \subsetneq Y$.
        \item $sup\ Y > \alpha$.
        \item $X\cap \omega_1 = Y\cap \omega_1$.
        \item for every $f\in X, f : \omega_2^{<\omega} \rightarrow \omega_2$, Y is closed by f.

    \end{enumerate}
    Then $SCC^{cof}$ holds.
\end{lemma}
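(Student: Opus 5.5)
The plan is to copy the proof of the preceding lemma for SCC, carrying the cofinality parameter $\alpha$ through unchanged. Let $C_0$ be the club of $X\in[M_0]^\omega$ for which, for cofinally many $\alpha<\omega_2$, some $Y$ satisfies (1)--(4). Put
\[ C=\{M\in[H_{\omega_3}]^\omega \mid M\prec H_{\omega_3} \text{ and } M\cap M_0\in C_0\}. \]
That $C$ is club follows as before: $M_0\subseteq H_{\omega_3}$, and $M\mapsto M\cap M_0$ pulls a club in $[M_0]^\omega$ back to a club in $[H_{\omega_3}]^\omega$. We intersect with the club of elementary submodels; building these uses $\omega_2$-$\DC$.

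Now fix $M\in C$ and write $X=M\cap M_0$. Take one of the cofinally many $\alpha$ given by $X\in C_0$, and a $Y$ witnessing (1)--(4) for it. Set $N=M[Y]$. By Lemma \ref{esm}, $M\prec N\prec H_{\omega_3}$, and $N$ is countable because $M$ and $Y^{<\omega}$ are countable.

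The key step is $N\cap\omega_2=Y$. For $\supseteq$: $Y\subseteq N$ via the identity function (or constant and projection functions) in $M$. For $\subseteq$: an element of $N\cap\omega_2$ has the form $g(x)$ with $g\in M$ and $x\in Y^{<\omega}$. We convert $g$ into a total function $f:\omega_2^{<\omega}\to\omega_2$ that agrees with $g$ on tuples of ordinals where $g$ takes ordinal values below $\omega_2$, and is $0$ elsewhere. This $f$ is definable from $g$ in $H_{\omega_3}$, so $f\in M\cap M_0=X$. By (4), $Y$ is closed under $f$, hence $g(x)=f(x)\in Y$. Care is needed with parameters: since $x$ is a tuple of ordinals, $g$ restricted to $\omega_2^{<\omega}$ is the relevant function. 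This is the only nontrivial step, and it is exactly the argument implicitly used in the previous lemma.

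With $N\cap\omega_2=Y$ established, the conditions of $SCC^{cof}$ follow:
\begin{enumerate}
\item $M\subseteq N$.
\item $M\cap\omega_2=X\cap\omega_2\subsetneq Y=N\cap\omega_2$.
\item $\sup(N\cap\omega_2)=\sup Y>\alpha$.
\item $N\cap\omega_1=Y\cap\omega_1=X\cap\omega_1=M\cap\omega_1$.
\end{enumerate}
Since the $\alpha$ range over a cofinal subset of $\omega_2$, and any $\beta<\omega_2$ lies below some such $\alpha$, this gives cofinally many witnesses. Hence $C$ witnesses $SCC^{cof}$.
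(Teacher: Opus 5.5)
Your proposal is correct and follows the paper's proof. Like the paper, you pull the club $C_0\subseteq[M_0]^\omega$ back to countable $M\prec H_{\omega_3}$, take $N=M[Y]$ for a witness $Y$ attached to a given $\alpha$, and read off the conditions of $SCC^{cof}$ from $N\cap\omega_2=Y$. The only difference is that you justify $N\cap\omega_2\subseteq Y$ by converting $g\in M$ into a total $f:\omega_2^{<\omega}\to\omega_2$ lying in $M\cap M_0$, a step the paper asserts without proof.
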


Since the models we use are of the form $L(\Gamma,\R)$ or $HOD(\Gamma)$, $\pmax$ does not force full choice, the following allow us to keep the conjecture after forcing $\AC$. In those models forcing a well order of $P(\R)$ (with $coll(\omega_{3},P(\R))$ for example) will force $\AC$. Again, the same is true for the cofinal version.

\begin{theorem}
    Suppose N is an inner model such that $N \models ZF + \DC + SCC$ and $P(\omega_2)\subseteq N$, then SCC holds.
\end{theorem}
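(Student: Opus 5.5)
The plan is to pass from $N$ to $V$ through the combinatorial reformulation of SCC on $\omega_2$, i.e.\ through functions $F:\omega_2^{<\omega}\to\omega_2$, because every such object lies in $N$. First note that $\omega_1^N=\omega_1$ and $\omega_2^N=\omega_2$. Indeed, $P(\omega_2)\subseteq N$ puts every well order of $\omega$ or of $\omega_1$ coded as a subset of $\omega_2$ into $N$, so $N$ collapses nothing below $\omega_2$. Moreover $\omega_2$ is a cardinal in $N$ because it is one in $V\supseteq N$. Consequently every function $F:\omega_2^{<\omega}\to\omega_2$ and every countable $X\subseteq\omega_2$ belongs to $N$, since both can be coded by subsets of $\omega_2$. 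The notions ``closed under $F$'', ``countable'' and ``$X\cap\omega_1$'' are absolute between $N$ and $V$ for these objects.

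Second, inside $N$ (where only ZF+DC holds) I will convert SCC into a $WCC^+$-type statement. Using the club witnessing SCC in $N$, I will show that for every $F:\omega_2^{<\omega}\to\omega_2$ in $N$ there is a $G:\omega_2^{<\omega}\to\omega_2$ in $N$ with the following property: every countable $X\subseteq\omega_2$ closed under $G$ has a countable $Y\supsetneq X$ closed under $F$ with $Y\cap\omega_1=X\cap\omega_1$. To build $G$, fix in $N$ a club $C$ of countable $M\prec H_{\omega_3}^N$ witnessing SCC, with $F\in M$. A club in $[H^N_{\omega_3}]^\omega$ contains the closure points of some function $H:[H^N_{\omega_3}]^{<\omega}\to H^N_{\omega_3}$, a fact needing only DC. Restricting to traces on $\omega_2$, I take $G$ to encode Skolem functions so that any $G$-closed $X$ equals $M\cap\omega_2$ for some $M\in C$. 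For such an $M$, let $N'$ be the SCC witness and put $Y=N'\cap\omega_2$. Then $Y$ is closed under $F$ because $F\in M\subseteq N'\prec H_{\omega_3}$.

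This trace step is the main obstacle. Choosing such a $G$ on ordinals (and not on all of $H_{\omega_3}$) requires a well order of enough of $H^N_{\omega_3}$, which $N$ may lack. I would first try to avoid it by working with $M[X]$, using Lemma \ref{esm} and $\DC$ to get the witnesses. Failing that, I would show that SCC in $N$ already yields club many traces $M\cap\omega_2$ of the required kind, exploiting the fact that the obstruction only concerns subsets of $\omega_2$, all of which lie in $N$.

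Third, I transfer the statement to $V$ and finish. The statement ``for every $F$ there is $G$ such that every $G$-closed countable $X$ has such a $Y$'' quantifies only over objects coded by subsets of $\omega_2$. It therefore holds in $V$ exactly as in $N$, because $N$ and $V$ have the same such functions and countable sets, and a witness $Y$ found in $N$ works in $V$. In $V$, with $\AC$, the club of $X\in[M_0]^\omega$ closed under the corresponding $G$ for every $F\in X$ is a club: it is a diagonal-intersection type club, which is legitimate since $|M_0|$-many conditions can be handled by closing under a single function on $M_0$. For such $X$ I still need one $Y$ closed under all $F\in X$ simultaneously. I would get this by applying the statement to a single $F^*$ that codes $X\cap M_0$'s countably many functions via a pairing of $\omega_2$ with itself and $\omega$ coordinates lying in $X$. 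Finally, Lemma (the non-cofinal version of Lemma \ref{scc}) gives SCC in $V$.
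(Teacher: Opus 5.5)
Your route has two genuine holes, both created by the detour through functions on $\omega_2$.

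The first is the trace step, which you leave open. A $G$ on $\omega_2$ whose closure points are traces $M\cap\omega_2$ of members of $C$ needs Skolem functions for $H_{\omega_3}^N$. Even your preliminary claim, that a club in $[H_{\omega_3}^N]^\omega$ contains the closure points of a single function, is proved by choosing, for each element of $[H_{\omega_3}^N]^{<\omega}$, a member of $C$ and an enumeration of it. $\DC$ does not supply such choices when $H_{\omega_3}^N$ is not well-orderable, which is exactly the situation in the intended application $N=N_\delta[G_0]$.

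The second hole is fatal to the route as designed. The function $F^*$ coding the countably many functions of $X$ depends on $X$, so the $G^*$ attached to $F^*$ is not an element of $X$, and nothing makes $X$ closed under it. Getting one $Y$ closed under every $f\in X$ simultaneously from a one-function statement of $WCC^{+}$ type is precisely the gap between $WCC^{+}$ and SCC. The paper crosses that gap in Theorem \ref{scc pmax} only with determinacy and the stationary tower, and your pairing trick does not bridge it, for the circularity reason just given. If you want to use the $M_0$ reformulation at all, you must transfer the full statement that club many $X\in[M_0]^\omega$ admit a $Y$ closed under all $f\in X$. But deriving that inside $N$ from SCC runs into the same choice problem as the trace step.

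The paper gives no proof of this theorem. Pushing the coding idea of your first paragraph one level up gives a direct argument that SCC itself is absolute between $N$ and $V$.

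Read $H_{\omega_3}$ in $N$ as the sets whose transitive closure is well-orderable of size at most $\omega_2$. If $x\in H_{\omega_3}^V$, an injection of $\mathrm{tc}(\{x\})$ into $\omega_2$ turns $(\mathrm{tc}(\{x\}),\in)$ into a well-founded extensional relation on a subset of $\omega_2$. This relation is coded by a subset of $\omega_2$, so it lies in $N$, and its transitive collapse computed in $N$ is $\mathrm{tc}(\{x\})$; hence $x\in H_{\omega_3}^N$. The reverse inclusion is immediate from $\omega_2^N=\omega_2$ and $N\subseteq V$, so $H_{\omega_3}^N=H_{\omega_3}^V$.

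Using $\AC$ in $V$, every countable subset of $H_{\omega_3}$, together with an enumeration of it, again belongs to $H_{\omega_3}^V\subseteq N$. The same holds for every $\omega$-sequence of such sets. Now let $C\in N$ be a club witnessing SCC in $N$. In $V$:
\begin{itemize}
\item $C$ is unbounded, since both models have the same countable subsets of $H_{\omega_3}$;
\item $C$ is closed, since every increasing $\omega$-chain from $C$ in $V$ already lies in $N$;
\item the relation $M\prec H_{\omega_3}$ is absolute, because the structure is literally the same set;
\item the witness $N'$ that $N$ provides for each $M\in C$ works in $V$ unchanged.
\end{itemize}
No reduction to $M_0$ or to a $WCC^{+}$-type statement is needed.
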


\begin{corollary}
    If $V\models SCC$, then SCC holds in any generic extension by an $\omega_2$-closed forcing.
\end{corollary}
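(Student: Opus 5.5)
We derive the corollary from the preceding theorem, applied with $N = V$ and the universe taken to be a generic extension $V[G]$, where $G$ is generic for an $\omega_2$-closed forcing $\mathbb{P}$. The theorem has three hypotheses, and we check them in $V[G]$ in turn.

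First, $V$ is an inner model of $V[G]$: it is a transitive class containing all ordinals and definable in $V[G]$ from a parameter by the ground model definability theorem. Second, $V \models \mathrm{ZF} + \DC + SCC$. Here $\DC$ holds in $V$, either because $V$ satisfies $\AC$ or because we are in the standing $\mathrm{ZF}+\DC$ setting, and $SCC$ holds in $V$ by assumption. Third, we must show $P(\omega_2)^{V[G]} \subseteq V$.

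The third hypothesis is the real content. Since $\mathbb{P}$ is $\omega_2$-closed, it adds no new sequences of ordinals of length $\omega_1$, and in fact none of length $<\omega_2$. The usual argument builds a descending sequence of conditions of length $\omega_1$ deciding the values one at a time. In the ZF+DC setting this construction needs $\DC_{\omega_1}$, or some well-ordering of the relevant part of $\mathbb{P}$; otherwise we take $V\models\AC$, as the corollary is used after forcing choice.

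There are two points to check in this step. The first is cardinal preservation: $\omega_1$ and $\omega_2$ are the same in $V$ and $V[G]$. No $\omega_1$-sequences are added, so no collapse of $\omega_2$ to $\omega_1$ can occur, and $\omega_1$ is preserved likewise. So $\omega_2^{V[G]} = \omega_2^V$.

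The second point, and the one I expect to be the main obstacle, is that $\omega_2$-closure only directly gives subsets of $\omega_1$, that is, sequences of length $<\omega_2$. A subset of $\omega_2$ itself is an $\omega_2$-sequence and need not be in $V$. This is presumably why the theorem is stated with $P(\omega_2)$, and here it seems too strong. I would therefore first check whether the theorem's proof only uses that $[\omega_2]^{\omega}$, and the functions $F:\omega_2^{<\omega}\to\omega_2$ tested on countable sets, are captured by $V$. Countable subsets of $\omega_2$ are certainly in $V$ by closure.

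A cleaner route is to redo the argument directly rather than through the theorem. Take $F\in V[G]$ with $F:\omega_2^{<\omega}\to\omega_2$. By $\omega_2$-closure, every initial segment $F\restriction \beta^{<\omega}$ for $\beta<\omega_2$ lies in $V$. Using the cofinal/end-extension characterization (the $\lhd$-version of SCC for arbitrary $M$), one then handles the countable sets $X$ closed under $F$, each of whose closure is witnessed by some such bounded piece. Failing that, I would interpret $\omega_2$-closed as $\le\omega_2$-closed, i.e.\ $\omega_3$-closed. Then $P(\omega_2)^{V[G]}\subseteq V$ follows immediately, and the theorem applies verbatim.

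Under either reading, once the hypotheses are verified, the theorem gives $SCC$ in $V[G]$. The same argument applies to $SCC^{cof}$, as the paper remarks.
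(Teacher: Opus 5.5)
Your fallback is the paper's argument. The paper uses ``$\kappa$-closed'' to mean closed under descending sequences of length $\kappa$: it calls $\pmax$ $\omega$-closed because it adds no countable sequences, and the corollary is meant for $Coll(\omega_3,P(\R))$. So the forcing adds no new subsets of $\omega_2$, $P(\omega_2)^{V[G]}\subseteq V$, and the inner-model theorem with $N=V$ gives SCC in $V[G]$. One correction in the ZF setting: building the descending sequence of length $\omega_2$ uses $\omega_2$-$\DC$, not $\DC_{\omega_1}$. The paper's ground models $N_\delta[G_0]$ do satisfy $\omega_2$-$\DC$.

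Drop the claim that the argument works ``under either reading''. For a forcing closed only under $\omega_1$-sequences, for example adding a Cohen subset of $\omega_2$ with conditions of size $\omega_1$, the hypothesis $P(\omega_2)\subseteq V$ genuinely fails. Your direct sketch via the bounded restrictions of $F$ to $\beta^{<\omega}$ is not carried out: it does not produce a club, in $V[G]$, of models each having an extension closed under all functions in it simultaneously. So nothing has been established for that reading.
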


As we will see that $\pmax$ forces $WRP(\omega_2)$ over models of $\Theta_{reg}$, we use the following to obtain the weak version of the conjecture in our model. The original proof uses the following.

\begin{lemma}
    $WRP(\omega_2)\ implies\ WCC^+$.
\end{lemma}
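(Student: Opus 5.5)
We argue by contradiction, using the standard form of $WRP(\omega_2)$: for every stationary $S\subseteq[\omega_2]^\omega$ there is $W\subseteq\omega_2$ with $|W|=\omega_1$ and $\omega_1\subseteq W$ such that $S\cap[W]^\omega$ is stationary in $[W]^\omega$. As in the remark before Theorem \ref{projective stationary}, we work with enough choice. This is harmless, since choice can be forced without adding countable sequences, and stationarity of the sets involved then transfers back to the ground model.

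\emph{Step 1: reduce failure of $WCC^+$ to a stationary set.} Suppose $WCC^+$ fails for some $F:\omega_2^{<\omega}\to\omega_2$. Let $S$ be the set of $X\in[\omega_2]^\omega$ closed under $F$ for which there is no $Y\in[\omega_2]^\omega$ with $X\subsetneq Y$, $Y\cap\omega_1=X\cap\omega_1$ and $Y$ closed under $F$.

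We claim $S$ is stationary. Suppose instead that some club $C$ is disjoint from $S$. By Kueker's theorem, $C$ contains all countable sets closed under some $G_0:\omega_2^{<\omega}\to\omega_2$. Merge $G_0$ and $F$ into a single function $G$, for instance by splitting arity or coordinates. Then every $X$ closed under $G$ lies in $C$, is closed under $F$, and is not in $S$. So such an $X$ has a proper $F$-closed extension with the same trace on $\omega_1$. This says exactly that $G$ witnesses $WCC^+$ for $F$, a contradiction. Hence $S$ is stationary.

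\emph{Step 2: reflect.} Apply $WRP(\omega_2)$ to $S$. This gives $W\supseteq\omega_1$ of size $\omega_1$ with $S\cap[W]^\omega$ stationary in $[W]^\omega$. Since $|W|=\omega_1<\omega_2$, the set $W$ is bounded in $\omega_2$. Fix a large regular $\theta$ and consider
$$D=\{M\cap W : M\prec H_\theta \text{ countable},\ F,W\in M\}.$$
This set contains a club of $[W]^\omega$, by the usual projection-of-clubs argument: it contains the countable subsets of $W$ closed under the Skolem functions of $H_\theta$ restricted to $W$. Therefore there is a countable $M\prec H_\theta$ with $F,W\in M$ and $X:=M\cap W\in S$.

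\emph{Step 3: the contradiction.} Put $Y=M\cap\omega_2$.
\begin{enumerate}
\item $Y$ is closed under $F$ because $F\in M$.
\item $Y\cap\omega_1=M\cap\omega_1=X\cap\omega_1$, since $\omega_1\subseteq W$.
\item $Y\supsetneq X$: the ordinal $\min(\omega_2\setminus W)$ is definable from $W\in M$, so it lies in $M\cap\omega_2$ but not in $W$.
\end{enumerate}
Thus $Y$ is a proper $F$-closed extension of $X$ with the same trace on $\omega_1$, contradicting $X\in S$. Hence $WCC^+$ holds.

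The main obstacle is Step 1, namely identifying clubs in $[\omega_2]^\omega$ with closure sets of a single function $G$ (Kueker's theorem). This is where the choice and $\DC$ bookkeeping matters. Everything after the reflection step is a routine elementary-submodel argument, because $\omega_1\subseteq W$ forces the traces on $\omega_1$ to agree automatically.
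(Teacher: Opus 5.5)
Your proof is correct and follows the paper's argument. You reflect the stationary set of failures to a small set (the paper uses an ordinal $\lambda\in(\omega_1,\omega_2)$, a special case of your $W$), choose a countable elementary submodel $M\ni F$ whose trace on that set lies in $S$, and take $M\cap\omega_2$ as the proper $F$-closed extension with the same trace on $\omega_1$; along the way you also supply two points the paper leaves implicit, namely why failure of $WCC^+$ gives a stationary set (Kueker's theorem) and why the extension is proper ($\min(\omega_2\setminus W)\in M$).
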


\begin{proof}
    Suppose there is $f : \omega_2^{<\omega} \rightarrow \omega_2$ such that the set S of sets for which $WCC^+$ fails for $f$ is stationary. By $WRP(\omega_2)$ there is $\lambda \in \omega_2, \lambda > \omega_1$ such that $S\cap [\lambda]^\omega$ is stationary in $[\lambda]^\omega$. Pose $S_{0}=\{X  \in [\omega_2]^{\omega}\ |\ X\cap \lambda \in S\}$, $S_{0}$ is stationary so there is $M\prec H_{\omega_2}$ such that $M\cap \omega_2 \in S_{0}$ and $f\in M$. But then $M\cap \omega_2$ is closed by $f$ and witnesses $M \cap \lambda \notin S$.
\end{proof}

The first improvement over [6] comes from the fact that $WRP(\omega_2)$ actually implies the stronger $WCC^+_{cof}$:

\begin{lemma}
    $WRP(\omega_2)$ implies $WCC^{+}_{cof}$.
\end{lemma}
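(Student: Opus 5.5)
The plan is to run the argument of the previous lemma, with two changes. The bad ordinal $\alpha$ will be coded into the countable set itself, and we will arrange $\lambda\in M$ so that $\sup(M\cap\omega_2)>\lambda$. Suppose $WCC^{+}_{cof}$ fails, witnessed by some $f:\omega_2^{<\omega}\rightarrow\omega_2$. Call $\alpha<\omega_2$ \emph{bad} for $X\in[\omega_2]^\omega$ if there is no $Y\in[\omega_2]^\omega$ with $X\subsetneq Y$, $\sup Y>\alpha$, $X\cap\omega_1=Y\cap\omega_1$ and $Y$ closed by $f$. Badness is upward closed in $\alpha$. Failure of $WCC^+_{cof}$ says exactly that the set $S$ of $X$ admitting some bad $\alpha$ meets every set of $G$-closed sets, i.e.\ is stationary.

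The first step, and the one I expect to be the main obstacle, is to replace $S$ by
\[T=\{X\in[\omega_2]^\omega \mid \text{some } \alpha\in X \text{ is bad for } X\}\]
and show that $T$ is still stationary. What I would try first is to look at the sets $S_\alpha=\{X\mid \alpha \text{ bad for } X\}$. If each $S_\alpha\cap\{X\mid\alpha\in X\}$ were nonstationary, witnessed by closing functions $G_\alpha$, one could form the diagonal function $G(\alpha^\frown x)=G_\alpha(x)$. Any $X$ closed by $G$ and by $f$ would then avoid every $S_\alpha$ with $\alpha\in X$. The difficulty is that a bad $\alpha$ for $X\in S$ may lie above $\sup X$.

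To handle this, I would use that badness is upward closed. Any $X\in S$ whose bad ordinals include some element of $X$ is already in $T$. For the remaining $X$, I would try to enlarge $X$ to its $G$-closure $X'$ with $\alpha$ adjoined, and check whether $\alpha$ stays bad for $X'$. The key point to verify is that an extension $Y$ of $X'$ with the same $\omega_1$-trace is also an extension of $X$ with the same trace. The issue is to do this while keeping $X'\cap\omega_1=X\cap\omega_1$. Failing that, I would fall back on a Fodor-style pressing-down argument on $X\mapsto\min\{\alpha \mid \alpha \text{ bad for } X\}$, restricted to a stationary set on which this value is eventually captured.

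Granting that $T$ is stationary, we next need reflection to a large $\lambda$. For $\beta<\omega_2$ the set $T\cap\{X\mid\beta\in X\}$ is stationary, so $WRP(\omega_2)$ gives $\lambda>\beta,\omega_1$ with $T\cap[\lambda]^\omega$ stationary in $[\lambda]^\omega$. As in the previous proof, $T_0=\{X\in[\omega_2]^\omega\mid X\cap\lambda\in T\}$ is then stationary. So there is a countable $M\prec H_{\omega_2}$ with $f,\lambda\in M$ and $M\cap\lambda\in T$. Let $\alpha\in M\cap\lambda$ be bad for $M\cap\lambda$, and put $Y=M\cap\omega_2$. Then:

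\begin{enumerate}
\item $Y$ is closed by $f$, since $f\in M$.
\item $Y\cap\omega_1=(M\cap\lambda)\cap\omega_1$, since $\lambda>\omega_1$.
\item $M\cap\lambda\subsetneq Y$, because $\lambda\in Y$.
\item $\sup Y>\lambda>\alpha$.
\end{enumerate}

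This contradicts the badness of $\alpha$ for $M\cap\lambda$, which proves $WCC^{+}_{cof}$.
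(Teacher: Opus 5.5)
Your closing step is correct: once a countable $M\prec H_{\omega_2}$ with $f,\lambda\in M$ has a bad ordinal \emph{inside} $M\cap\lambda$, the set $M\cap\omega_2$ refutes its badness. That step is essentially the $WCC^+$ argument, and it only produces extensions with supremum just above $\lambda$. So all of the cofinal content sits in the claim you grant, that $T$ is stationary, and that claim is never proved.

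Neither of your suggested routes can prove it:
\begin{itemize}
\item \textbf{Pressing down.} If $X\in S\setminus T$, its least bad ordinal is $\geq\sup X$, since otherwise upward closure would put a bad ordinal into $X$. So $X\mapsto\min\{\alpha\mid\alpha\text{ bad for }X\}$ is nowhere regressive on the sets that matter, and no pressing-down argument applies.
\item \textbf{Enlargement.} This route defeats itself. Suppose $G$ closes under $f$ and successor, $\alpha\notin X$, and $X'$ is the $G$-closure of $X\cup\{\alpha\}$. Then $X\subsetneq X'$, $X'$ is closed by $f$, and $\sup X'>\alpha$. If $X'\cap\omega_1=X\cap\omega_1$, then $X'$ itself witnesses that $\alpha$ is not bad. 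So precisely when $\alpha$ is bad, adjoining it must enlarge the $\omega_1$-trace.
\end{itemize}

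What is missing is a way to reach past a bad ordinal lying above the reflection point. The paper does this as follows:
\begin{itemize}
\item It reflects $S$ itself to some $\lambda$ and fixes a bijection $g:\omega_1\to\lambda$, so that $T'=\{\delta<\omega_1\mid g[\delta]\in S\}$ is stationary.
\item It uses Theorem~\ref{projective stationary} to find $M\prec(H_{\omega_2},f,g,\in)$ with $M\cap\omega_1\in T'$ and cofinally many $\beta$ satisfying $M(\beta)\cap\omega_1=M\cap\omega_1$.
\item Setting $X=g[M\cap\omega_1]=M\cap\lambda$ and choosing such a $\beta$ above $\lambda$ and above a bad ordinal of $X$, the set $Y=M(\beta)\cap\omega_2$ is the forbidden extension.
\end{itemize}

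The same coding by $g$ would also repair your argument directly. Only the $\omega_1$ many sets $g[\delta]$ with $\delta\in T'$ are involved, so their least bad ordinals are bounded by some $\gamma^*<\omega_2$. Choose $M$ with $f,g,\lambda,\gamma^*\in M$ and $M\cap\omega_1\in T'$; this needs only the stationarity of $T'$. Then your final step with $Y=M\cap\omega_2$ works, because $\sup Y>\gamma^*$.
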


\begin{proof}
    Fix $f : \omega_2^{<\omega} \rightarrow \omega_2$ and suppose that the set S of sets for which $WCC^{+}_{cof}\ fails\ at\ f$ is stationary. we have $S=\{X\in [\omega_2]^{\omega}\ |\ \exists \alpha \in \omega_2\ \forall Y \in [\omega_2]^{\omega},\ if\ Y\ verify\ conditions\ (1)-(3)\ of\ WCC^{+}_{cof},\ then\ Y\ fails\ (4) \}$.
    By $WRP(\omega_2)$ there is $\lambda \in \omega_2$ such $S\cap [\lambda]^\omega$ is stationary in $[\lambda]^\omega$.
    \\
    \\
    Fix a surjection $g : \lambda \rightarrow \omega_1$, the set $T=\{\alpha\in \omega_1\ |\ g[\alpha] \in S \cap [\lambda]^\omega \}$ is stationary (for $\omega_1$). By lemma \ref{projective stationary} there is $M\prec (H_{\omega_2},f,g,\in)$ such that 
    \begin{enumerate}
        \item $M\cap \omega_1\in T$.
        \item for cofinally many $\beta \in \omega_2$, $M(\beta)\cap\omega_1=M\cap\omega_1$.
    \end{enumerate}
    Since $g\in M$, we have for $X=g[M\cap \omega_1]$ : 
    \begin{enumerate}
        \item $X\cap\omega_1 = M\cap\omega_1$.
        \item $X\subseteq M$.
    \end{enumerate}
    because $X\cap\omega_1 \in T$ we also have $X\in S$, fix $\alpha$ such that $WCC^{+}_{cof}\ fails\ above\ \alpha$ for X. For any $\beta > \alpha$ such that $M(\beta)\cap \omega_1 = M\cap \omega_1$ and Y=$M(\beta)\cap \omega_2$ :
    \begin{enumerate}
        \item Y is closed by $f$.
        \item $Y\cap \omega_1 = M(\beta)\cap\omega_1 = M\cap\omega_1 = X\cap \omega_1$.
        \item $X \subsetneq Y$ since $\beta \in Y-X$.
        \item sup Y  $>\alpha$.
    \end{enumerate}
    This contradict the assumption $X\in S$.
\end{proof}

\section{$\pmax$, Forcing and Determinacy}

\subsection{$\pmax$ forcing}

We do not need to go into the internal of $\pmax$, using $\pmax$ only make sense in models of $\AD^+$ which will be the case here. The following properties of $\pmax$ are sufficient to get the result we want : $\pmax$ is $\omega$-closed (and thus does not add reals or countable sequences), weakly-homogeneous, $\pmax\subseteq H_{\omega_1}$ and being a $\pmax$ condition is $\Pi^1_2$, implying that the structures we encounter are correct enough about $\pmax$. We will also use the following results :

\begin{theorem} \label{wrp}
    If $V=L(P(\mathbb{R}))$, $\AD_{\mathbb{R}}$ holds and $\Theta$ is regular, then $\pmax$ forces $WRP(\omega_2)$.
\end{theorem}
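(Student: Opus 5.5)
The plan is to follow Woodin's analysis of $\pmax$ extensions of models of $\ADR$ in which $\Theta$ is regular. In $L(P(\R))[G]$ the relevant objects are all coded by sets of reals of the ground model. We take a name $\dot S$ for a stationary subset of $[\omega_2]^\omega$ and a condition forcing that $\dot S$ does not reflect to any $[\lambda]^\omega$ with $\omega_1\le\lambda<\omega_2$, and derive a contradiction.

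\textbf{Step 1 (coding).} In the extension, $\omega_2=\Theta^{L(\R)}$ is not the target; rather $\omega_2^{V[G]}=\omega_2^{V}$, which is $\delta^2_1$-like and much smaller than $\Theta$. Every subset of $\omega_2$ in $V[G]$ lies in $L(A,\R)[G]$ for some $A\subseteq\R$ with $w(A)<\Theta$. By weak homogeneity of $\pmax$ and $\ADR$, we can fix a set of reals $A$ such that $\dot S$, the condition, and $\pmax$ itself are definable in $L(A,\R)$. Regularity of $\Theta$ then gives a Suslin, co-Suslin set $B$ whose pointclass properly contains everything needed, so that the statement "$\dot S$ is forced stationary" is witnessed inside a countable structure.

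\textbf{Step 2 ($A$-iterable condition).} Using $\ADR$, we take a condition $(M,I,a)\in\pmax$ below $p$ that is $B$-iterable, so that $B\cap M\in M$ and $(M,B\cap M)$ is elementarily correct for a suitable fragment. Inside $M$, the stationary set $\dot S$ is interpreted via $\omega_2^M$ and the tower $I$. Since $M$ is countable and correct, $M$ satisfies a reflecting version: $M$ believes $\omega_2^M$ is small relative to its own Woodin-type closure. Its own stationary set $S^M$ then reflects to some $\lambda<\omega_2^M$, because $\omega_2^M$ is collapsed cofinally to $\omega_1$ in an iteration, and stationarity of $[\lambda]^\omega$-sets is preserved under the generic iterations given by $I$.

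\textbf{Step 3 (iterating and transferring back).} We iterate $(M,I,a)$ along a generic $G$ of length $\omega_1$ to $(M^*,I^*)$, so that $j(S^M)$ agrees with $\dot S_G$ on a club. The image of the reflection point $j(\lambda)$ is below $\omega_2^{V[G]}$, and reflection is preserved because $I^*$ is precipitous and $M^*$ is correct about stationarity of countable sets, being $\omega_1$-closed enough by $B$-iterability. This contradicts the choice of $p$.

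The main obstacle is Step 2: showing that the countable model $M$ itself witnesses reflection of its $S^M$. We expect this to rest on regularity of $\Theta$ through the existence of many $\Theta$-Woodin-like closure points. If the argument fails, the first fallback is to prove a weaker conclusion, namely reflection to $[\lambda]^\omega$ on a projective stationary set, since Theorem~\ref{projective stationary} gives reflection of projective stationary sets.
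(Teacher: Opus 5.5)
You have a genuine gap: Steps 2--3 carry the whole argument, and they do not work.

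\textbf{Why Steps 2--3 fail.} In a $\pmax$ condition $(M,I,a)$, $I$ is a normal ideal on $\omega_1^M$, not a tower. The length-$\omega_1$ iteration $j:(M,I)\to(M^*,I^*)$ determined by $G$ sends $\omega_1^M$ to $\omega_1$, but it sends $\omega_2^M$ to some $\gamma<\omega_2$, because $M^*$ has size $\omega_1$. For the same reason $M^*$ misses most of the $\omega_2$ many reals and is not closed under $\omega$-sequences, so ``$\omega_1$-closed enough'' is false.
\begin{itemize}
\item No $S^M$ can play the role you assign it. $j(S^M)$ consists of countable subsets of $\gamma$, and $[\gamma]^\omega$ is nonstationary in $[\omega_2]^\omega$ (club many $x$ contain $\gamma$). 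So $j(S^M)$ cannot agree with the stationary set $\dot S_G$ on a club.
\item You give no reason why $M\models$ ``$S^M$ reflects''. What happens to $\omega_2^M$ along an iteration has no bearing on what $M$ satisfies.
\item Even granting reflection in $M$, elementarity only yields reflection of $j(S^M)$ inside $M^*$. The genericity fact $I^*=\mathrm{NS}_{\omega_1}\cap M^*$ concerns subsets of $\omega_1$, and could at best make that reflection true in $V[G]$ for $j(S^M)$ itself.
\end{itemize}
What is missing is any mechanism tying the actual set $S=\dot S_G$ to objects inside countable iterable models, and that is the entire content of the theorem.

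Two further points. Regularity of $\Theta$ does no work in your sketch: under $\ADR$ every set of reals is already Suslin co-Suslin. Yet regularity of $\Theta$ is exactly what gives $\omega_2$-$\DC$ in $L(P(\R))[G]$, which you need even to handle stationary subsets of $[\omega_2]^\omega$ there. Also, the projective stationarity theorem of Section~1 is not a reflection principle, and a weaker conclusion is not the statement, so your fallback does not help.

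\textbf{A route that works.} The paper quotes this result from Woodin's book [6] without proof. The idea is to push the problem down to $H_{\omega_2}$, where the $\pmax$ iteration machinery legitimately applies, by way of a forcing axiom. Woodin shows that under $\ADR+\Theta$ regular, $L(P(\R))^{\pmax}\models\mathrm{MM}^{++}(\mathfrak{c})$. That proof is where $A$-iterable conditions, $\ADR$ and the regularity of $\Theta$ are really used. $WRP(\omega_2)$ then follows:
\begin{enumerate}
\item Given stationary $S\subseteq[\omega_2]^\omega$, shrink it to the stationary set $S'$ of those $x\in S$ closed under fixed surjections $e_\gamma:\omega_1\to\gamma$ ($0<\gamma<\omega_2$).
\item The poset $\mathrm{Coll}(\omega_1,\omega_2)$ has size $\omega_2=\mathfrak{c}$ and is $\sigma$-closed, so it keeps $S'$ stationary. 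Since $\langle g[\xi]:\xi<\omega_1\rangle$ is a club in $[\omega_2]^\omega$ for the generic $g$, the name $\dot T$ for $\{\xi<\omega_1: g[\xi]\in S'\}$ is forced to be stationary.
\item Apply the axiom to get a filter $h$ meeting the dense sets $\{p:\xi\in\mathrm{dom}(p)\}$ ($\xi<\omega_1$) and $\{p:\xi\in\mathrm{ran}(p)\}$ ($\xi\le\omega_1$), with $\dot T^h$ stationary. This gives $f=\bigcup h:\omega_1\to\omega_2$ with range $X\supseteq\omega_1+1$ of size $\omega_1$.
\item The set $\{f[\xi]:\xi\in\dot T^h\}$ is contained in $S'$ and is stationary in $[X]^\omega$. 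Indeed, if $p\in h$ forces $\xi\in\dot T$, then $p\cup f\restriction\xi$ forces $g[\xi]=f[\xi]$.
\item Closure under the $e_\gamma$ makes $X$ an ordinal $\lambda$ with $\omega_1<\lambda<\omega_2$, and $S\cap[\lambda]^\omega$ is stationary.
\end{enumerate}
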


\begin{theorem}
    Suppose $\Gamma$ is closed under continuous preimages and $L(\Gamma,\R)\models \AD^+$, then if $G\subseteq \pmax$ is $L(\Gamma,\R)$-generic :
    $L(\Gamma,\R)[G]\models \omega_2^V$=$\omega_2^{V[G]}$, $\Theta^{V} >\omega_2^{V[G]}$ and $|\R|=\omega_2$.
\end{theorem}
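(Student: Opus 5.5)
The plan is to prove the three conclusions separately, using only the listed properties of $\pmax$: it is $\omega$-closed, weakly homogeneous, $\pmax\subseteq H_{\omega_1}$, and being a condition is $\Pi^1_2$. The standard facts from the $\pmax$ analysis in [6] will be quoted where needed. Write $V=L(\Gamma,\R)$ throughout.

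\emph{Step 1: cardinals below $\omega_2$ and the easy inequality.} Since $\pmax$ is $\omega$-closed and $\DC$ holds in $V$, no reals or countable sequences of ordinals are added. Hence $\omega_1^{V[G]}=\omega_1^V$.

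For $\Theta^V>\omega_2^V$, I will work purely in $V$. By $\AD^+$ and closure of $\Gamma$ under continuous preimages, $\Gamma$ contains the $\Sigma^1_2$ sets. Hence $\omega_2^V=\delta^1_2$. Moreover $\delta^1_2$ is the length of a prewellordering of $\R$ lying in $V$, and $\Theta^V$ is a limit of such lengths, so $\delta^1_2<\Theta^V$. Once Step 2 shows $\omega_2^{V[G]}=\omega_2^V$, the inequality $\Theta^V>\omega_2^{V[G]}$ follows immediately.

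\emph{Step 2: $\omega_2^V$ is not collapsed.} I expect this to be the main obstacle. The generic $G$ is coded by $A_G\subseteq\omega_1$, and every subset of $\omega_1$ in $V[G]$ is of the form $j(a)$. Here $(M,a)$ is a condition in $G$, and $j:M\to M^*$ is the unique iteration of length $\omega_1$ sending $a$ into $A_G$.

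Suppose towards a contradiction that $f:\omega_1\to\omega_2^V$ is a surjection in $V[G]$. I would code $f$ by a subset $B$ of $\omega_1$ via pairing; this is possible because $\omega_1$ is preserved and the pairing function on $\omega_1$ is absolute. By the remark above, $B=j(b)$ for some $b\in M$ with $(M,a)\in G$. Then $f$ belongs to the iterate $M^*$. The key point is that the height of $M^*$ is below $\omega_2^V$. To see this, note that $M^*$ is an $\omega_1$-length iterate of a countable model, which is coded by a real. In $V$, its ordinals are bounded by a $\Sigma^1_2$-in-the-code rank below $\delta^1_2=\omega_2^V$; this is the standard boundedness argument from [6]. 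So $M^*$ cannot contain a surjection onto $\omega_2^V$, and we have the contradiction. The delicate part is checking that the iteration $j$ and its height are computed correctly in $V$ without full choice. I would handle this with $\DC$ and the $\Pi^1_2$ definability of conditions.

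\emph{Step 3: $|\R|=\omega_2$ in $V[G]$.} The lower bound comes from $\AD$ in $V$. There is no injection of $\omega_1$ into $\R$ there, and $\pmax$ adds no reals. If $|\R|=\omega_1$ held in $V[G]$, the wellorder of $\R$ of length $\omega_1$ would be coded by a subset of $\omega_1$. By Step 2's representation, this subset lies in an iterate $M^*$. That would give a wellorder of $\R^V$ of length $\omega_1$ definable in $V$ from a real, which is impossible. Hence $|\R|\geq\omega_2$.

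For the upper bound, I would use the density argument of [6] showing that $V[G]\models\psi_{AC}$. Concretely, for each real $x$ there is some $\alpha<\omega_2$ such that $x$ is coded by the pair of stationary sets indexed by $\alpha$, relative to a fixed partition of $\omega_1$ and the canonical functions below $\omega_2$. This produces an injection $\R\to\omega_2$, and therefore $|\R|\le\omega_2$.
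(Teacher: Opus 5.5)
The paper gives no proof of this statement; it is quoted from Woodin [6], so your outline has to stand on its own. The architecture is the standard one: no new countable sets, every $B\subseteq\omega_1$ in $V[G]$ lies in the iterate $M^*$ of some $(M,a)\in G$, and $\psi_{AC}$ for $|\R|\le\omega_2$. However, the lower bound in Step 3 fails as written. The iterate $M^*$ is determined by $A_G$ and is not an element of $V$, so a wellorder of $\R$ lying in $M^*$ is not ``definable in $V$ from a real''. The fact that $V$ has no injection $\omega_1\to\R$ is also irrelevant, since $V[G]$ has one anyway. The correct argument is immediate from Step 2: $V$ has a surjection $\R\to\delta^1_2=\omega_2^V$, so a surjection $\omega_1\to\R$ in $V[G]$ would collapse $\omega_2^V$. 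There is also a slip in Step 2: $f:\omega_1\to\omega_2^V$ is not a subset of $\omega_1\times\omega_1$, so pairing does not code it. Code the prewellorder $\{(\xi,\xi'):f(\xi)<f(\xi')\}\subseteq\omega_1\times\omega_1$ instead, and write it as $j(b)$ with $b\in M$ (not ``of the form $j(a)$'').

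The real gap is the claim $M^*\cap\mathrm{Ord}<\omega_2^V$, which is the whole content of $\omega_2$-preservation. Your justification misplaces the difficulty: $M^*$ is not coded by a real and $j$ is not in $V$, so nothing is being ``computed correctly in $V$''. What you need is a bound depending only on a real $x$ coding $M$ that applies to every $\omega_1$-iteration. For instance, consider pairs $(y,\eta)$, where $y$ is a real coding a countable iteration of $M$ and $\eta$ is an ordinal of its last model. Set $(y',\eta')\triangleleft(y,\eta)$ iff $y'$ end-extends $y$ and $\eta'$ lies below the image of $\eta$ under the iteration map from the last model of $y$ to that of $y'$. This relation is $\Sigma^1_2(x)$. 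It is wellfounded: by $\DC$, a descending sequence would give a descending sequence of ordinals in the direct limit of a countable iteration, contradicting iterability. By Kunen--Martin its rank is therefore $<\omega_2^V$. Since $\pmax$ adds no countable sequences, each initial segment $y_\alpha$ of the generic iteration lies in $V$. Induction on $j_{\alpha,\omega_1}(\eta)$ then gives $j_{\alpha,\omega_1}(\eta)\le\mathrm{rank}_{\triangleleft}(y_\alpha,\eta)$, hence $M^*\cap\mathrm{Ord}<\omega_2^V$. Finally, Steps 2 and 3 use far more than the four listed properties of $\pmax$. Over $L(\Gamma,\R)$, both the representation of $P(\omega_1)^{V[G]}$ by iterates and $\psi_{AC}$ need $B$-iterable conditions for every $B\in\Gamma$, which is where $\AD^+$ enters ([6], Ch.~9). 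In Step 1, it is $L(\R)\subseteq L(\Gamma,\R)$, not $\Gamma$ itself, that supplies the $\Sigma^1_2$ sets.
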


\begin{theorem}
     Suppose $\Gamma$ is closed under continuous preimages and $L(\Gamma,\R)\models \AD^+$, then if $G\subseteq \pmax$ is $L(\Gamma,\R)$-generic the following are equivalent :
     \begin{enumerate}
         \item $L(\Gamma,\R)[G]\models \omega_2-\DC$.
        \item $\Theta$ is regular.
     \end{enumerate}
\end{theorem}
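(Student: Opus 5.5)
The plan is to prove the two directions separately, using throughout three facts recalled above: $\pmax$ is $\omega$-closed, it is weakly homogeneous, and $\pmax\subseteq H_{\omega_1}$. Write $V=L(\Gamma,\R)$ and $V[G]=L(\Gamma,\R)[G]$. Two consequences of these facts will be used repeatedly.

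\begin{itemize}
\item $V[G]$ has the same reals and countable sequences as $V$, so $\DC$ holds in $V[G]$.
\item Every set in $V[G]$ is definable in $V[G]$ from $G$, a real, and ordinals (and elements of $\Gamma$).
\end{itemize}

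By the previous theorem we also know that $\omega_2^{V[G]}=\omega_2^V<\Theta^V$ and that $V[G]\models|\R|=\omega_2$.

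\textbf{(1)$\Rightarrow$(2).} Suppose toward a contradiction that $\Theta$ is singular in $V$, and put $\lambda=\mathrm{cf}(\Theta)<\Theta$. Fix in $V$ a cofinal sequence $\langle\alpha_\xi:\xi<\lambda\rangle$ in $\Theta$; each $\alpha_\xi$ is a surjective image of $\R$.

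\begin{enumerate}
\item Since $\lambda<\Theta$, there is a surjection $\R\to\lambda$. Composing with a bijection $\omega_2\to\R$ from $V[G]$ gives a surjection $\omega_2\to\lambda$ in $V[G]$.
\item In $V[G]$, $\omega_2$-$\DC$ yields $\AC_{\omega_2}$. Use it to choose, for each $\xi<\lambda$ (indexed through $\omega_2$), a surjection $\pi_\xi:\R\to\alpha_\xi$.
\item Gluing the $\pi_\xi$ gives a surjection $\R\times\omega_2\to\Theta^V$. Since $|\R|=\omega_2$ in $V[G]$, this is a surjection $\rho:\R\to\Theta^V$ in $V[G]$.
\item Pull $\rho$ back to $V$. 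Say $\rho$ is definable from $G$, a real $x$ and ordinals. By weak homogeneity of $\pmax$, the relation ``some condition forces $\dot\rho(y)=\beta$'' is definable in $V$ from $x$ and ordinals. This yields a surjection from $\R\times\pmax$, hence from $\R$, onto $\Theta$ in $V$, contradicting the definition of $\Theta$.
\end{enumerate}

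\textbf{(2)$\Rightarrow$(1).} Assume $\Theta$ is regular and let $R$ be a relation in $V[G]$ to which we want to apply $\omega_2$-$\DC$. We reduce to choosing sets of reals, and then do the choosing.

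\begin{enumerate}
\item By the definability fact and weak homogeneity, each partial sequence $s$ and each candidate successor can be coded by a set of reals in $\Gamma$ together with ordinals and $G$ (working with $\pmax$-names in $V$). Let $w(s)$ be the least Wadge rank of such a code.
\item The construction has length $\omega_2<\Theta$. Since $\Theta$ is regular, the Wadge ranks used along it are bounded below $\Theta$. Hence everything lives inside a single pointclass $\Gamma'\subsetneq\Gamma$ that is the surjective image of $\R$ in $V$.
\item Fix in $V$ a surjection $\R\to\Gamma'$. In $V[G]$ fix a well-order of $\R$ of type $\omega_2$. Now make each successor choice by taking the least real, in that well-order, that codes an admissible successor. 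Limit stages are handled by closure.
\end{enumerate}

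\textbf{Main obstacle.} I expect the hardest step to be step 1 of (2)$\Rightarrow$(1): ensuring that the \emph{set} of admissible successors at each stage is uniformly codable by reals below a fixed Wadge rank. This requires a definability and uniformization argument for $\pmax$-names under $\AD^+$. My first attempt would use the $\Sigma^2_1$ / Solovay-sequence analysis of $L(\Gamma,\R)$ to show that the codes can be taken uniformly.
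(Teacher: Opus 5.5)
The genuine gap is step 2 of (2)$\Rightarrow$(1), and this step carries all the content of that direction. Your (1)$\Rightarrow$(2) is essentially correct. In its step 4 you need neither weak homogeneity nor a definability claim; as stated, that claim also omits the set-of-reals parameter that elements of $L(\Gamma,\R)[G]$ require. For any name $\dot\rho$, sending $(y,p)$ to the $\beta$ with $p\Vdash\dot\rho(\check y)=\check\beta$ already gives, in $L(\Gamma,\R)$, a partial surjection from $\R\times\pmax$ onto $\Theta$.

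Step 1 of (2)$\Rightarrow$(1), which you single out as the obstacle, is routine coding of names by sets of reals; step 2 is the problem. Regularity of $\Theta$ bounds the range of a function that already exists and has a small domain, for example a function in $L(\Gamma,\R)$ defined on a surjective image of $\R$. ``The Wadge ranks used along the construction'' is not such a function, because the construction does not exist yet. Producing it requires, each time the needed rank goes up, choosing a new set of reals of larger Wadge rank. That is itself an instance of $\omega_2$-$\DC$ for sets of reals, so the step is circular. The same problem sits inside ``limit stages are handled by closure''. The hypothesis on $R$ gives each partial sequence a successor somewhere in $L(\Gamma,\R)[G]$, not necessarily one coded inside your fixed $\Gamma'$. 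So a least-real recursion inside $\Gamma'$ can get stuck before $\omega_2$.

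What is missing is a closure-point argument that fixes the bound before the recursion starts. One way to do it is as follows.
\begin{enumerate}
\item Reduce to relations on sets of reals; the ordinal parameters can be chosen canonically.
\item Use regularity in this form: every function in $L(\Gamma,\R)[G]$ from $\R$, hence from $\omega_2$, into $\Theta$ is bounded. This is proved exactly like your step 4.
\item Let $F(\gamma)$ be the supremum, over all partial sequences of length $<\omega_2$ of sets of Wadge rank $<\gamma$, of the least Wadge rank of an $R$-successor.
\item Iterate $F$ for $\omega_2$ steps to get $\gamma^*<\Theta$ closed under $F$ and of cofinality $\omega_2$.
\item Fix one set $B^*$ of Wadge rank $\gamma^*$. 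Only then run the least-real recursion over continuous preimages of $B^*$ and its complement.
\end{enumerate}
To know that $F(\gamma)<\Theta$, the set of such partial sequences must be a surjective image of $\R$ in the extension. At limit stages you also need $\omega_2$ to remain regular there. Both require knowing how $\omega_1$-sequences of reals in $L(\Gamma,\R)[G]$ arise. For instance, it would suffice that every subset of $\omega_1$ in the extension is the image of an element of some condition in $G$ under the iteration determined by $G$. This is information about $\pmax$ beyond the three properties you list, and your outline does not supply it.

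Finally, your first bullet is wrong as stated. $\AD^+$ gives only $\DC_\R$, and full $\DC$ fails in $L(\Gamma,\R)$ whenever $\mathrm{cf}(\Theta)=\omega$. The $\omega$-closure of $\pmax$ only preserves whatever $\DC$ the ground model already has.
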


We see that the properties used here are very general, this could indicate that other forcing notions might also force the conjecture over certain models of determinacy. An interesting point is that SCC is a statement about $H_{\omega_3}$ that can be forced with $\pmax$ while most of the usual consequences of $\pmax$ are focused on $H_{\omega_2}$. 

\subsection{Determinacy}

Here is a list of determinacy related results used in the proof. 

\begin{theorem}
    Suppose $\ADR$ holds, then every set $A\subseteq \R$ is $<\Theta$-homogeneous Suslin.
\end{theorem}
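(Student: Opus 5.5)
We must prove that under $\ADR$ every $A\subseteq\R$ is $<\Theta$-homogeneous Suslin; that is, for every $\kappa<\Theta$, $A$ is the projection of a tree $T$ on $\omega\times\lambda$ carrying a $\kappa$-complete homogeneity system. The route goes through the classical theory of scales and measures under $\AD$, combined with the extra strength of $\ADR$ to get uniformization and sufficiently complete ultrafilters. The plan has three steps.

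First, use $\ADR$ to see that every set of reals is Suslin. Under $\ADR$ every relation $R\subseteq\R\times\R$ can be uniformized. By the Martin--Woodin (or Woodin's) argument, uniformization for all relations yields that every $A\subseteq\R$ admits a scale. So every $A$ is Suslin, via the tree of a scale on $A$ whose norms map into $\Theta$. Applying the same reasoning to $\R\setminus A$, both $A$ and its complement are Suslin; that is, the pointclass of Suslin co-Suslin sets is all of $P(\R)$.

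Second, get the measures. Under $\AD$, Martin's theorem gives homogeneity trees for sets admitting scales of the right kind: if $A$ is $\Pi$-like relative to a pointclass with the scale property, one builds $\kappa$-complete measures on $\lambda^{n}$ from the fact that every $\kappa<\Theta$ is a surjective image of $\R$. Each measure $\mu_s$ is the image of a countably complete ultrafilter, namely a cone or supercompactness-type measure on $[\kappa]^{<\omega_1}$ or $\mathcal{P}_{\omega_1}(\kappa)$. Under $\ADR$, Solovay's theorem gives a normal fine measure on $\mathcal{P}_{\omega_1}(\R)$, and hence on $\mathcal{P}_{\omega_1}(\lambda)$ for every $\lambda<\Theta$. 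Concretely, take the scale $\vec\varphi$ on $\R\setminus A$ or on $A$. Then define the homogeneity measures on finite sequences of ordinals as pushforwards of the supercompactness measure on $\mathcal{P}_{\omega_1}(\lambda)$ under the maps $\sigma\mapsto$ (the norm values of the "generic" reals in $\sigma$), as in the Martin--Steel proof that $\Pi^1_{n}$ sets are homogeneous from Woodin cardinals, transposed to the determinacy setting. The completeness of these measures is governed by the fact that every countably complete ultrafilter on a set of size $<\Theta$ is in fact $\kappa$-complete for all $\kappa<\Theta$ in the relevant sense. More accurately, under $\AD$ every ultrafilter is countably complete, and we can choose the index sets of size $\geq\kappa$ so that the measures are $\kappa$-complete.

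Third, verify countable completeness of the tower: whenever $x\in A$, the tower $(\mu_{x\restriction n})_n$ must be countably complete. This is the main obstacle. Here I would use the fact that in the normal fine measure picture, a generic countable $\sigma$ is closed under the scale's witnesses. The relevant sequences therefore cohere, and a failure of wellfoundedness of the direct limit would produce a descending sequence in the norms. Completeness at arbitrary $\kappa<\Theta$ then follows by choosing $\lambda\ge\kappa$, since under $\AD$ all the measures involved are $\kappa$-complete by Kunen's/Martin's results that ultrafilters on sets of size $<\Theta$ are $\Theta$-complete modulo wellordered unions. I expect the delicate point to be matching the trees to the measures so that homogeneity, rather than mere weak homogeneity, is obtained. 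If the direct route fails, I would instead show every set is $<\Theta$-weakly homogeneous and then invoke the Martin--Solovay construction to convert weak homogeneity of $A$ into homogeneity of its complement, applied to $\R\setminus A$.
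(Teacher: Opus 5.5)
The paper does not prove this statement; it quotes it as a known consequence of $\ADR$. So your proposal has to stand on its own, and as written it has genuine gaps.

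Step 1 is a sound starting point: under $\ADR$ both $A$ and $\R\setminus A$ are Suslin. But Steps 2 and 3 never define a homogeneity system. The tree is ``the tree of a scale on $A$ or on $\R\setminus A$''. The measures are pushforwards of the fine normal measure on $P_{\omega_1}(\lambda)$ along maps that are never specified. Three things must be verified: that $\mu_s$ concentrates on $T_s$, that the $\mu_s$ project to one another, and that $(\mu_{x\restriction n})_n$ is countably complete for $x\in A$. All three are replaced by the remark that a failure would yield a descending sequence of norms.

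That cannot follow merely from having a scale. In $L(\R)$ under $\AD$, a $\Sigma^2_1$-complete set has a scale but is not homogeneously Suslin. Otherwise the Martin--Solovay construction would make the $\Pi^2_1$-complete set Suslin in $L(\R)$, which it is not; that construction is valid under $\AD+\DC$, where all ultrafilters are countably complete. So your argument must exploit something specific, such as the Suslin representation of the complement or particular norms and measures, and the proposal never says what. Appealing to Martin--Steel does not fill this in. That is a ZFC argument whose homogeneity comes from Woodin cardinals via extenders, and you give no substitute for that input in the $\ADR$ setting.

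The completeness step is wrong as stated. It is false that a countably complete ultrafilter on a set of size $<\Theta$ is $\kappa$-complete for every $\kappa<\Theta$, and there is no Kunen or Martin theorem saying so. Under $\AD$ the club ultrafilter on $\omega_1$ is countably complete, yet the $\omega_1$ singletons are null sets whose union is $\omega_1$, so it is not $\omega_2$-complete. The fine measure on $P_{\omega_1}(\lambda)$ fails the same way: the fibers of $\sigma\mapsto\sup(\sigma\cap\omega_1)$ are $\omega_1$ many null sets covering the whole space. Hence completeness beyond $\omega_1$ is not inherited by its pushforwards and would have to be proved for the specific maps you use. Choosing index sets of size $\geq\kappa$ is necessary but not sufficient. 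The tree has to be built over measures that are already $\kappa$-complete, and homogeneity then has to be checked for that tree.

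Finally, the fallback misdescribes Martin--Solovay. From a (weakly) homogeneous tree for one set, it produces only a tree projecting onto the complement, with no homogeneity system on it. Upgrading that tree to a homogeneous one is exactly the Martin--Steel step that needs Woodin cardinals. So the fallback leaves the same gap open, and you do not argue its first half either, namely that every set is $<\Theta$-weakly homogeneously Suslin.
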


\begin{theorem} (Sargsyan)
    Suppose $\ADR$ holds, then every set of real is $\Theta$-universally Baire.
\end{theorem}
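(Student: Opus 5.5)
The plan is to derive $\Theta$-universal Baireness from the previous theorem, that under $\ADR$ every set of reals is $<\Theta$-homogeneous Suslin, via the Martin--Solovay construction. Since we work in $\mathsf{ZF}+\DC$, I take "$\Theta$-universally Baire" to mean the tree version. For every poset $\mathbb{P}$ that is a surjective image of $\R$, and hence has size $<\Theta$ in the relevant sense, there are trees $T,S$ on ordinals $<\Theta$ with $p[T]=A$ and $p[S]=\R\setminus A$, such that $p[T]$ and $p[S]$ remain complementary in $V^{\mathbb{P}}$.

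Fix $A\subseteq\R$ and such a $\mathbb{P}$. First choose $\kappa<\Theta$ so large that $\mathbb{P}$ has size $<\kappa$. By the previous theorem, $A$ is $\kappa$-homogeneous Suslin, witnessed by a tree $T$ on $\omega\times\lambda$ with $\lambda<\Theta$ and a homogeneity system $\langle\mu_s : s\in\omega^{<\omega}\rangle$ of $\kappa$-complete measures. Next form the Martin--Solovay tree $S$ for $\R\setminus A$. Its nodes code sequences of ordinals below a suitable $\lambda'<\Theta$ that represent decreasing elements in the ultrapower towers $\prod\mu_{x\restriction n}$. In ZF this needs care. The point is that the relevant ultrapowers of ordinals $<\Theta$ by these measures stay below $\Theta$, because under $\AD$ ordinal ultrapowers by measures on sets that are images of $\R$ can be computed via prewellorderings. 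This gives $S$ as a genuine tree on ordinals $<\Theta$. In $V$, a standard countable-completeness argument shows $p[S]=\R\setminus A$: $x\notin p[T]$ iff the tower along $x$ is wellfounded, iff it admits an order-preserving rank function, iff $x\in p[S]$.

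The key step is absoluteness to $V[g]$ for $g\subseteq\mathbb{P}$ generic. Disjointness $p[T]\cap p[S]=\emptyset$ is absolute, because it amounts to wellfoundedness of the tree of attempts to build a common $x$ with branches through both trees, and wellfoundedness is absolute. For covering, I would lift each $\mu_s$ to $V[g]$, where it generates a measure $\bar\mu_s$. This is a Levy--Solovay argument: since $|\mathbb{P}|<\kappa$ and $\mu_s$ is $\kappa$-complete, every $\mathbb{P}$-name for a subset is decided on a measure-one set up to fewer than $\kappa$ alternatives. The lifted measures are still $\kappa$-complete and still form a homogeneity system for $T$, and the ultrapower embeddings agree on ordinals, so $S$ is still the Martin--Solovay tree computed in $V[g]$. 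Running the $V$-argument inside $V[g]$ then gives $p[T]\cup p[S]=\R^{V[g]}$.

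I expect the main obstacle to be carrying out the Levy--Solovay lifting and the Martin--Solovay bookkeeping without full $\AC$. Specifically, one must check that the $\kappa$-completeness of the measures is enough when $\mathbb{P}$ is only a surjective image of $\R$, and that all ordinals involved stay below $\Theta$. My first approach would be to use that, under $\ADR$, every ultrafilter on a set that is an image of $\R$ is $\Theta$-complete (a Kunen-style result). I would also pass to a countably-complete tower argument that uses only $\DC$, which is available throughout.
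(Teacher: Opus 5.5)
The paper gives no proof of this statement; it quotes it from Sargsyan. So your argument has to stand on its own, and it has a genuine gap exactly where you anticipated one.

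\textbf{The gap: non-wellorderable posets.} You define $\Theta$-universal Baireness using posets that are surjective images of $\R$. But your first step, choosing $\kappa<\Theta$ with $|\mathbb{P}|<\kappa$, presupposes that $\mathbb{P}$ is wellorderable. Under $\AD$ the natural surjective images of $\R$ are not wellorderable:
\begin{itemize}
\item $\mathrm{Coll}(\omega,\R)$ contains a copy of $\R$.
\item $\pmax$ cannot be wellorderable, because a wellorderable forcing never adds a wellordering of $\R$ over a model of $\AD$. Suppose $q$ forces $\dot r$ to inject $\check{\R}$ into the ordinals. Order the reals by the least $q'\le q$ deciding $\dot r(x)$, and then by the decided value. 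This wellorders $\R$ in the ground model.
\end{itemize}
Yet these are the posets the paper actually needs; for example, sets of reals of $N_\delta$ must remain universally Baire after forcing with $\pmax$.

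For such $\mathbb{P}$ your Levy--Solovay step has no content. Deciding a name for a subset of $\lambda^n$ on a measure-one set means intersecting a family of measure-one sets indexed by $\mathbb{P}$, and $\kappa$-completeness only controls wellordered families.

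The step does not merely lack a proof; it fails. Take $\mathbb{P}=\mathrm{Coll}(\omega,\R)$.
\begin{itemize}
\item Every $\lambda<\Theta$ becomes countable.
\item The pairs of conditions with equal domain form a dense copy of $\mathrm{Coll}(\omega,\R)$ inside $\mathrm{Coll}(\omega,\R)\times\mathrm{Add}(\omega,1)$.
\item So the extension has the form $V[G_1][c]$, where $c$ is Cohen over a model $V[G_1]$ in which $\lambda^n$ is already enumerated.
\item The image of $c^{-1}(1)$ under that enumeration is a subset of $\lambda^n$ that splits every infinite ground-model set.
\end{itemize}
Hence no nonprincipal $\mu_s$ generates an ultrafilter in $V[g]$, and your covering argument for the Martin--Solovay tree has nothing to run on.

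\textbf{The fallback is false.} Under $\AD$ the club filter on $\omega_1$ is an ultrafilter on a surjective image of $\R$, yet:
\begin{itemize}
\item it is not even $\omega_2$-complete (intersect the sets $\omega_1\setminus\{\alpha\}$);
\item it is not closed under intersections indexed by reals (take the final segments $(\varphi(x),\omega_1)$ for a norm $\varphi$ of $\R$ onto $\omega_1$).
\end{itemize}
Likewise every nonprincipal $\mu_s$ on $\lambda^n$ fails $\lambda^+$-completeness. So the non-wellorderable case needs a genuinely different mechanism. One example is in the spirit of Theorem~\ref{coll embedding}: replace $T$ by $j_\mu(T)$ for a fine measure $\mu$ on $P_{\omega_1}(\R)$, accepting that the ordinals no longer stay below $\Theta$. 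Alternatively, consult Sargsyan's argument.

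\textbf{What your outline does prove.} If you restrict to wellorderable $\mathbb{P}$, your outline is essentially sound in $\mathsf{ZF}+\DC$:
\begin{itemize}
\item the $|\mathbb{P}|$-indexed intersections can be taken canonically;
\item the ordinal ultrapowers agree;
\item covering only needs the lifted filters to be ultrafilters with the same ordinal ultrapowers, not a new homogeneity system.
\end{itemize}
But that yields only the $<\Theta$ version, the routine Martin--Solovay consequence of the preceding theorem. This is too weak for how the paper uses the result.

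\textbf{Minor slip.} In your chain of equivalences, $x\notin p[T]$ iff $T_x$ is wellfounded iff the tower along $x$ is \emph{ill}founded, not wellfounded.
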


As a corollary we have :

\begin{theorem} \label{ub}
    Suppose $\ADR$ holds and let $(\theta_\beta, \beta<\Omega)$ be the $\Theta$-sequence. For any $\alpha<\Omega$ and $\beta>\alpha$ such that $HOD\models$"$ \theta_\beta$ is regular", $V_{\theta_\beta}^{HOD(\Gamma)}\models$ "$ZF + \AD^+ +$ all sets of reals are universally baire " where $\Gamma=\{A\subseteq \R\ |\ w(A)<\alpha \}$.
\end{theorem}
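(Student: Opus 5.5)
The plan is to treat the statement as a reflection argument. The first step is to identify the sets of reals of the model. Write $H=HOD(\Gamma)$ and $\theta=\theta_\beta$. I would show $P(\R)\cap H=\Gamma$. The inclusion $\Gamma\subseteq H$ is immediate. For the converse I would use the standard $\AD_\R$ analysis: every set of reals in $H$ is ordinal definable from some $A\in\Gamma$ and a real. Since $\Gamma$ is an initial segment of the Wadge hierarchy of length $\theta_\alpha$, and $\theta_\alpha$ is a member of the $\Theta$-sequence, the sets $OD_{A,x}$ have Wadge rank below $\theta_\alpha$. Hence they lie in $\Gamma$. I am reading $w(A)<\alpha$ as Wadge rank below $\theta_\alpha$. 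A small consequence is that $H\cap\R=\R$.

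Next I would prove $V_\theta^H\models ZF$, and I expect this to be the main obstacle. It reduces to showing that $\theta$ is strongly inaccessible in $H$, that is, regular in $H$ and with $H\models |P(\gamma)|<\theta$ for $\gamma<\theta$. For regularity I would use Vopěnka: $H$ is a symmetric-type extension of $HOD$ by a forcing of size below $\theta_{\alpha+1}\le\theta$. I would need to check that this poset has size strictly less than $\theta$, using $\beta>\alpha$. A small forcing then preserves the $HOD$-regularity of $\theta$. For the power-set bound I would again go through Vopěnka. Every subset of $\gamma$ in $H$ is coded by a name in $HOD$ of size below $\theta$. Moreover $\theta$ is a limit of the $\Theta$-sequence at which $HOD$ has no surjections, since $\theta_\beta$ is a Wadge-gap ordinal. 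Hence $HOD\models 2^{\gamma}<\theta$. If this fails I would fall back on showing only that $V_\theta^H$ satisfies replacement, by cofinality and regularity arguments inside $H$.

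Then I would verify $\AD^+$ in $V_\theta^H$. Every $A\in\Gamma$ is determined in $V$, and strategies are coded by reals, so $\AD$ holds in $H$ and hence in $V_\theta^H$. For the full $\AD^+$ I would check that every $A\in\Gamma$ is $\infty$-Borel with a code in $H$, and that ordinal determinacy for games on $\lambda<\Theta^H$ holds. For the codes I would use the homogeneous Suslin theorem: for $A\in\Gamma$, take a scale on $A$ from a pointclass inside $\Gamma$ just above $A$; the associated tree is $OD$ from a set in $\Gamma$, so it lies in $H$. The ordinal-determinacy clause transfers downward because such games are determined in $V$ via these representations, and the winning strategies are definable from $\Gamma$-parameters.

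Finally, universal Baireness. Fix $A\in\Gamma$ and a poset $P\in V_\theta^H$. By the homogeneous Suslin theorem, together with the Martin–Solovay construction for $A$ and $\R\setminus A$, I would produce trees $T,U\in H$ of height below $\theta$ that project to $A$ and $\R\setminus A$. Each tree comes from a scale in $\Gamma$, and the measures come from $\AD$ restricted to $\Gamma$. The homogeneity measures are in $H$ and are $<\theta$-complete, because $\theta$ is below $\Theta$ and the $V$-measures restrict. The standard homogeneity argument then shows that $p[T]$ and $p[U]$ stay complementary in $H^P$, so $A$ is $<\theta$-universally Baire in $V_\theta^H$. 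Sargsyan's theorem guarantees that the trees exist in $V$ with enough closure. The remaining work is to check that the canonical choice of these trees is definable from parameters in $\Gamma$, so that it lands in $H$.
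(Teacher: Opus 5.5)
\textbf{Verdict: there is a genuine gap, in the universal Baireness step.} Your outline is sensible: compute $P(\R)\cap HOD(\Gamma)=\Gamma$, get $ZF$ from the $HOD$-regularity of $\theta_\beta$, check $\AD^+$, then get universal Baireness by homogeneity. But that last step, which carries the real content of the statement, fails as you describe it.

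\textbf{Why the trees you propose cannot work.} You must handle every poset $P\in V_{\theta_\beta}^{HOD(\Gamma)}$. This includes, for instance, $Coll(\omega,\theta_\alpha)$, whereas $\Theta^{HOD(\Gamma)}=\theta_\alpha$.
\begin{itemize}
\item Trees built from scales whose norms lie in $\Gamma$ are on $\omega\times\kappa$ with $\kappa<\theta_\alpha$.
\item A nonprincipal ultrafilter on the well-orderable set $\kappa^n$ is not $\kappa^+$-complete.
\item So the measures ``coming from $\AD$ restricted to $\Gamma$'' cannot be $<\theta_\beta$-complete, and the $V$-measures do not ``restrict'' to such.
\item Consequently the Martin--Solovay construction from that system gives absoluteness only for posets of size $<\kappa$.
\end{itemize}
Witnesses whose completeness is cofinal in $\theta_\beta$ must be built from sets of reals of $V$ lying strictly above $\Gamma$ in the Wadge hierarchy, of rank in $[\theta_\alpha,\theta_\beta)$. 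This is exactly where the $<\Theta$-homogeneous Suslin theorem and Sargsyan's theorem (under $\ADR$ every set of reals is $\Theta$-universally Baire) are needed. The paper states the result as a direct corollary of those two theorems; in your proposal Sargsyan's theorem appears only as a side remark.

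\textbf{The missing idea is the transfer into $HOD(\Gamma)$.} For each $A\in\Gamma$ and each $\kappa<\theta_\beta$ you need witnesses that:
\begin{itemize}
\item are constructed from objects outside $\Gamma$, yet belong to $HOD(\Gamma)$, e.g.\ by being ordinal definable from $A$ and a real;
\item have rank $<\theta_\beta$;
\item remain complementary over generic extensions of $HOD(\Gamma)$, not just of $V$.
\end{itemize}
You defer this to ``checking that the canonical choice of these trees is definable from parameters in $\Gamma$''. No canonical choice is exhibited, and producing one is the substance of the argument.

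\textbf{Two smaller issues.}
\begin{itemize}
\item When $\alpha=\gamma+1$ is a successor, not every set in $\Gamma$ has a scale with norms in $\Gamma$. Otherwise $L(\Gamma,\R)=L(B,\R)$, for $B$ of Wadge rank $\theta_\gamma$, would satisfy $\AD$ with all sets of reals Suslin, hence $\ADR$, which no model of the form $L(B,\R)$ does. So your $\infty$-Borel codes need a different source, for instance a basis-theorem argument giving codes ordinal definable from $A$ and a real.
\item Since $HOD(\Gamma)\models\AD$, the condition ``$|P(\gamma)|<\theta$'' is not the right form of inaccessibility there. What you need is that no function in $HOD(\Gamma)$ maps some $V_\gamma^{HOD(\Gamma)}$, with $\gamma<\theta$, cofinally into $\theta$.
\end{itemize}
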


Since $\pmax$ forces $\Theta$ to become $\omega_3$ when $\Theta$ is regular this can be use to obtain a model where $\Theta^{UB}=\omega_3$.

\begin{theorem}
    Suppose $\ADR$ and $\Theta$ is regular, then $\AD^+$ holds.
\end{theorem}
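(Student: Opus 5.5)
The plan is to verify the three clauses of $\AD^+$ one at a time: $\DC_{\R}$, every set of reals is $\infty$-Borel, and $<\Theta$-ordinal determinacy (for every $\lambda<\Theta$, every continuous $\pi:\lambda^\omega\to\omega^\omega$ and every $A\subseteq\R$, the game on $\lambda$ with payoff $\pi^{-1}[A]$ is determined). The main input is the earlier result that under $\ADR$ every $A\subseteq\R$ is $<\Theta$-homogeneous Suslin. Regularity of $\Theta$ plays only a background role. It makes sure that the homogeneity measures and the trees we choose can be taken below $\Theta$, and that the choices of codes made below can be done uniformly. I will use it only there and will not try to squeeze more out of it.

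\textbf{$\DC_{\R}$.} Under $\ADR$ every relation $R\subseteq\R\times\R$ can be uniformized. Given $R$ with $\forall x\,\exists y\,R(x,y)$, fix a uniformizing function $f$. Starting from any real $x_0$, the sequence $x_{n+1}=f(x_n)$ is then a branch through $R$. This gives $\DC_{\R}$ with no further choice.

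\textbf{$\infty$-Borel.} Fix $A$ and a tree $T$ on $\omega\times\kappa$ with $\kappa<\Theta$ and $A=p[T]$. I will write down an explicit $\infty$-Borel code using the ordinal parameter $T$. Consider the closed game $G_x$ in which one player tries to build a branch of $T_x$. Membership $x\in A$ is decided by the rank analysis of the wellfounded part of $T_x$. This rank analysis is a transfinite Boolean combination, over ordinals below $\kappa^+$, of the basic open conditions $x\restriction n=s$. So it gives a code $(S,\varphi)$ with $S$ a set of ordinals and $A=\{x : L[S,x]\models\varphi(S,x)\}$, where we can take $S=T$ and $\varphi$ the statement that $T_x$ is illfounded. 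Illfoundedness is absolute between $L[T,x]$ and $V$, which is what makes this code work.

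\textbf{Ordinal determinacy.} This is the step I expect to be the real work. Fix $\lambda<\Theta$, $\pi$ and $A$.
\begin{enumerate}
\item Choose a $\lambda^+$-homogeneous tree $T$ for $A$. This is possible since $\lambda^+<\Theta$ and $A$ is $<\Theta$-homogeneous Suslin.
\item Pull $T$ back along $\pi$. Continuity of $\pi$ lets us define a tree $T^\pi$ on $\lambda\times\kappa$ whose nodes are pairs $(u,t)$ with $(\pi(u)\restriction k,t)\in T$ for the appropriate $k$. We have $p[T^\pi]=\pi^{-1}[A]$.
\item Check that the homogeneity measures $\mu_s$ of $T$, reindexed through $\pi$, witness that $T^\pi$ is $\lambda^+$-homogeneous.
\item Run Martin's proof of determinacy of homogeneously Suslin games, now for a game on the well-orderable set $\lambda$. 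Player I plays the auxiliary game in which he also plays ordinals witnessing membership in $p[T^\pi]$. That auxiliary game is closed on $\lambda\times\kappa$, so it is determined in ZF by Gale--Stewart, because the move set is well-orderable. If I wins the auxiliary game, projecting his strategy gives a winning strategy in the original game. If II wins, integrate II's strategy against the measures. Countable completeness of the measures, guaranteed since the $\mu_s$ are $\lambda^+$-complete, gives II a winning strategy in the original game.
\end{enumerate}

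The delicate point is whether Martin's argument goes through without $\AC$. The only choices it needs are of strategies in closed games on well-orderable sets, and these are canonical: take the least winning move in the well-order. Countable completeness is used only to show that the resulting integrated strategy for II actually wins.
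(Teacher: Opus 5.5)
The paper gives no proof of this statement; it quotes it as background, so there is no argument of the paper to compare against. Your three-clause outline is the standard one. The $\DC_{\R}$ clause (iterate a uniformizing function) is fine. The $\infty$-Borel clause is also fine: code $A=p[T]$ by $(T,\varphi)$, where $\varphi$ says that $T_x$ is ill-founded, and use absoluteness of well-foundedness for $L[T,x]$. The preliminary remarks about rank analyses below $\kappa^+$ are unnecessary.

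Step 4, however, gives the two completeness properties the wrong jobs.

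\textbf{Countable completeness of single measures.} This is automatic under $\AD$, since there is no nonprincipal ultrafilter on $\omega$. If it were all you used, asking for $\lambda^+$ would be pointless, and in any case it does not show that the integrated strategy wins.

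\textbf{Where $\lambda^+$-completeness is needed.} It is needed to \emph{define} II's strategy. At a position of odd length, the measure-one set of I's possible auxiliary ordinal moves so far is split into at most $|\lambda|$ pieces, according to the auxiliary strategy's answer in $\lambda$. Only $\lambda^+$-completeness guarantees that some (hence exactly one) piece has measure one.

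\textbf{Why the strategy wins.} This comes from the tower clause of homogeneity. Write $\nu_u$ for the reindexed measures on $T^\pi$, and let $y$ be a play by the integrated strategy with $\pi(y)\in A$. Let $X_n\subseteq\kappa^n$ be the set of $h$ with $(y\restriction n,h)\in T^\pi$ along which the auxiliary strategy reproduces II's moves in $y$. Then $X_n\in\nu_{y\restriction n}$. Countable completeness of the tower $(\nu_{y\restriction n})_{n<\omega}$ gives one $h$ threading every $X_n$. That is a run of II's winning auxiliary strategy which I wins, a contradiction.

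So step 3 must verify exactly this: $T^\pi$ carries coherent $\lambda^+$-complete measures whose towers along points of $\pi^{-1}[A]$ are countably complete. This does hold, because the tower along $y$ is the tower of $T$ along $\pi(y)$, regrouped. But $\pi$ reveals digits of $\pi(y)$ at an irregular rate, so naively padding the unused coordinates destroys coherence. Either code each block of newly required ordinals as a single ordinal, or run Martin's auxiliary game directly with I supplying $T$-ordinals as $\pi$ reveals digits.

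Finally, your remarks about the role of regularity of $\Theta$ do not correspond to anything in the argument. No step uses it, and the homogeneity theorem you invoke is stated under $\ADR$ alone. Once repaired, your proof gives $\AD^+$ (with $\DC_{\R}$) from $\ADR$ alone, so drop those remarks. Regularity would matter only if $\AD^+$ is read with full $\DC$, as the paper's remark that $\AD^+$ ``includes $\DC$'' might suggest. Full $\DC$ is not provable from $\ADR$ alone (under $\AD$ it implies $\mathrm{cf}(\Theta)>\omega$), and your proposal does not address it.
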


Note that the definition of $\AD^+$ includes $\DC$. The next results are classic $\AD^+$ consequences.  Since the models of determinacy we consider will all be models of $\ADR$, in them all sets of reals will be suslin/co-suslin. The next vital result will be used to get ordinal definable sets from the strategies we get from $\ADR$.

\begin{theorem} \label{SBT}
    (Solovay basis theorem)
    Assume $\AD^+$ and let A be a Suslin co-Suslin set of reals, then any $\Sigma^2_1(A)$ non empty collection of sets of reals has a $\Delta^2_1(A)$ element.
\end{theorem}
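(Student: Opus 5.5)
The Solovay basis theorem is a classical fact about $\AD^+$. I would prove it by combining two ingredients. The first is that, since $A$ is Suslin and co-Suslin, the pointclass $\Sigma^2_1(A)$ admits a scale-type uniformization, equivalently a good $\Sigma^2_1(A)$ representation via trees on ordinals. The second is the standard reflection argument inside $L(A,\R)$.

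First I reduce to working in $L(A,\R)$, which is a model of $\AD^+$ by downward absoluteness. A $\Sigma^2_1(A)$ statement about a set of reals $B$ asserts the existence of a set of reals $C$ such that $(\R,B,C,A)\models\varphi$ for some projective-like formula $\varphi$; quantifying over sets of reals can be restricted to sets Wadge-reducible to some level. Let $\mathcal{C}$ be the nonempty $\Sigma^2_1(A)$ collection, defined by $\exists C\,\psi(B,C,A)$ with $\psi$ projective in its set parameters. Consider the least Wadge rank (or the least $\delta$ with $\mathcal{C}\cap P(\R)^{L_\delta(A,\R)}\neq\emptyset$) at which a pair $(B,C)$ witnessing membership appears.

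Next I show that such a witness exists at a level $\delta<(\delta^2_1)^{L(A,\R)}$ and is $\Delta^2_1(A)$. The key is the $\Sigma_1$-reflection property: $L_{\delta^2_1}(A,\R)\prec_{\Sigma_1} L(A,\R)$ for formulas with real and $A$ parameters. Under $\AD^+$ with $A$ Suslin co-Suslin, this is obtained through Woodin's analysis, in which $\Sigma^2_1(A)$ sets are exactly the sets Suslin via trees in $L(A,\R)$, together with the fact that $\delta^2_1$ is the least $\Sigma_1$-stable ordinal. The existence of some $(B,C)$ is a $\Sigma_1$ statement over $L(A,\R)$, so it reflects below $\delta^2_1$. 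Within $L_{\delta^2_1}(A,\R)$, every set of reals is $\Delta^2_1(A)$, since it is definable from an ordinal below $\delta^2_1$ and a real. To see this, use the $<_{L(A,\R)}$-least witness with respect to the canonical ordinal-definable-from-$A$ well order of the levels. By $\AD$, that witness is in fact definable from $A$ and a real, and the OD part collapses because every level below $\delta^2_1$ is $\Sigma_1$-definable with parameters in $\R\cup\{A\}$. So the first $B$ in the canonical order of $L(A,\R)$ satisfying the property is $\Delta^2_1(A)$: membership $x\in B$ holds iff there exists an initial segment of $L(A,\R)$ witnessing $x\in B$ for the least witness, which is $\Sigma^2_1(A)$, and the complement is defined symmetrically.

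The main obstacle is the reflection step, namely establishing $L_{\delta^2_1}(A,\R)\prec_1 L(A,\R)$ and the identification of $\Sigma^2_1(A)$ with $(\Sigma_1)^{L(A,\R)}$ under $\AD^+$ when $A$ is Suslin co-Suslin. This is precisely where the hypothesis is used: it guarantees, through the Martin–Steel/Woodin scale analysis, that $L(A,\R)$ has enough Suslin representations. My first attempt at this step would be to cite the $L(\R)$ argument of Solovay–Martin–Steel relativized to $A$. I would then verify that the $\AD^+$ hypothesis supplies the needed scales on $\Sigma^2_1(A)$ sets, so that the least-witness construction yields a $\Delta^2_1(A)$ element of $\mathcal{C}$.
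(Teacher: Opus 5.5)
The paper gives no proof of this theorem; it quotes it from the $\AD^+$ literature, so your argument has to stand on its own. It has a genuine gap at the first step. Both the hypothesis and the conclusion refer to $V$: $\mathcal{C}$ is nonempty in $V$, and the member you produce must be $\Delta^2_1(A)$ in $V$. Passing to $L(A,\R)$ loses both.

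The statement $\exists B\,\exists C\,\psi(B,C,A)$ is upward absolute from $L(A,\R)$ to $V$, but not downward, so $\mathcal{C}\cap L(A,\R)$ can be empty. For example, take $V\models\ADR$ (such as the paper's $L(\Gamma,\R)$), $A=\emptyset$, and $\mathcal{C}=\{\R^\sharp\}$. This collection is $\Sigma^2_1$, because $\R^\sharp$ is the unique set of reals satisfying a condition projective in it (a countably iterable EM blueprint for $L(\R)$ over $\R$). It is nonempty by $\ADR$, yet $\R^\sharp\notin L(\R)$.

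Similarly, a $\Sigma^2_1(A)$ definition evaluated in $V$ can define a larger set than it does in $L(A,\R)$. So $\Delta^2_1(A)^{L(A,\R)}$ does not transfer to $\Delta^2_1(A)^V$ without further argument. This is exactly the paper's situation: the function $h$ comes from a winning strategy for a game on reals supplied by $\ADR$ in $L(\Gamma,\R)$, and nothing places it in $L(A,\R)$.

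What you need is a reflection principle in $V$ itself. In Woodin's analysis this is the $\AD^+$ fact that true $\Sigma^2_1(A)$ statements, for Suslin co-Suslin $A$, are already witnessed by Suslin co-Suslin sets, together with the scale analysis of $\Sigma^2_1(A)$. This is where the hypothesis on $A$ actually enters; in your sketch it plays no identifiable role.

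Even when $V=L(A,\R)$, the step producing a lightface $\Delta^2_1(A)$ member fails as written. There is no ``first $B$ in the canonical order of $L(A,\R)$'': under $\AD$ there is no well-order of $\R$. The canonical order compares sets only through their definitions from ordinals, $A$, and reals, and distinct real parameters cannot be compared definably. Your own remark that the least witness is definable ``from $A$ and a real'' concedes this.

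For the same reason, the claim that every set of reals in $L_{\delta^2_1}(A,\R)$ is $\Delta^2_1(A)$ is false in the lightface sense. There are only countably many $\Delta^2_1(A)$ sets, but every singleton $\{x\}$ with $x\in\R$ lies in that level. The claim holds only with a real parameter.

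So your argument yields at best a $\Delta^2_1(A,z)$ member for some real $z$. Removing $z$ needs a further basis result for reals, for example that a nonempty $\Sigma^2_1(A)$ set of reals has a $\Delta^2_1(A)$ member, obtained from a $\Sigma^2_1(A)$ scale. This is not cosmetic. The paper needs $h$ to be ordinal definable from $s$ alone, so that $\R\cap HOD^{L(\Gamma,\R)}(X)$ is closed under $h$ for every countable $X\ni s$.
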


\begin{theorem} \label{coding}
    Suppose $\Gamma$ is a pointclass closed under continuous pre-images such that\\
    \\
    \centerline{$L(\Gamma,\R)\models \AD^{+}$.}
    \\
    \item Let $\langle \Theta_\alpha : \alpha < \Omega \rangle$ be the $\Theta-sequence$ of $L(\Gamma,\R)$. Suppose that $\Omega$ is a limit ordinal,
    then there is a surjection $\pi : \Theta^\omega\cap V_\Theta \rightarrow P(\R)\cap L(\Gamma,\R)$ that is $\Sigma_1$-definable in $L(\Gamma,\R)$ from $\{\R\}$.
\end{theorem}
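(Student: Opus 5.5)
The plan is to make $\pi$ read off a Suslin representation. I take $\Theta^\omega\cap V_\Theta$ to mean the $\omega$-sequences of elements of $V_\Theta$ (all computed in $L(\Gamma,\R)$), and let $\Theta$ denote $\Theta^{L(\Gamma,\R)}$. For $\sigma$ in this set, define $\pi(\sigma)=p[\sigma(0)]$ whenever $\sigma(0)$ is a tree on $\omega\times\kappa$ for some ordinal $\kappa$; otherwise set $\pi(\sigma)=\emptyset$. Here $p[T]=\{x\in\R\ |\ \exists f\in\kappa^\omega\ \forall n\ (x\restriction n,f\restriction n)\in T\}$ is computed in $L(\Gamma,\R)$. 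The extra coordinates of $\sigma$ are unused, so they only make the domain match the statement. If countable sequences of ordinals are preferred, I would instead let $\sigma(0)$ code the tree through a fixed $\Sigma_1$ pairing, which changes nothing below.

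\textbf{Step 1: every set of reals of $L(\Gamma,\R)$ is Suslin there, via a tree in $V_\Theta$.} I would use the $\AD^+$ analysis of the $\Theta$-sequence (Woodin). In $L(\Gamma,\R)\models\AD^+$, the Suslin sets form an initial segment of the Wadge hierarchy. If that segment were proper, the Suslin sets would have a largest pointclass, namely the $\Sigma^2_1$-type pointclass at some $\Theta_\alpha$ with $\alpha+1=\Omega$. So $\Omega$ would be a successor, against the hypothesis that $\Omega$ is a limit. Hence every $A\in P(\R)\cap L(\Gamma,\R)$ is Suslin in $L(\Gamma,\R)$.

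Moreover, each $A$ lies below some $\Theta_\alpha$ with $\alpha<\Omega$. By the usual scale-norm argument, $A$ then carries a scale whose norms map into ordinals $<\Theta$. The tree of that scale is a tree $T_A$ on $\omega\times\kappa$ with $\kappa<\Theta$, and $T_A\subseteq\omega^{<\omega}\times\kappa^{<\omega}\subseteq V_\kappa$, so $T_A\in V_\Theta$. Taking $\sigma(0)=T_A$ gives $\pi(\sigma)=A$, which proves surjectivity. The inclusion $\mathrm{ran}(\pi)\subseteq P(\R)\cap L(\Gamma,\R)$ is immediate, since $p[T]$ is computed inside $L(\Gamma,\R)$.

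\textbf{Step 2: $\Sigma_1$-definability from $\{\R\}$.} The relation $A=\pi(\sigma)$ says: $\sigma$ is a function with domain $\omega$, and either $\sigma(0)$ is a tree on $\omega\times\kappa$ with $A=p[\sigma(0)]$, or $\sigma(0)$ is not such a tree and $A=\emptyset$. For the first case I write $A=p[T]$ as ``$A\subseteq\R$, and there is a function $b$ assigning to each $x\in A$ a branch of $T_x$, and for every $x\in\R\setminus A$ the tree $T_x$ is well-founded, witnessed by a rank function into $\kappa^+$.'' Every clause is $\Sigma_1$ with parameter $\R$. Quantifiers over reals are bounded by $\R$, and the witnesses $b$ and the rank functions are asserted to exist. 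Being a tree on $\omega\times\kappa$ is $\Delta_0$, and well-foundedness is $\Sigma_1$ via rank functions. So $\pi$ is $\Sigma_1$-definable in $L(\Gamma,\R)$ from $\{\R\}$. Note that $\Gamma$ itself is not needed as a parameter.

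\textbf{Main obstacle.} The step I expect to be hardest is Step 1: the implication from ``$\Omega$ is a limit'' to ``all sets are Suslin, with trees on ordinals $<\Theta$.'' This requires the deep $\AD^+$ structure theory: Suslin sets form an initial segment of the Wadge hierarchy, the largest Suslin pointclass sits at a $\Theta_{\alpha}$ with $\alpha+1=\Omega$, and scales can be taken with norms below $\Theta$. I would cite these from Woodin's $\AD^+$ theory rather than reprove them, and then verify that the resulting tree lies in $V_\Theta$ of $L(\Gamma,\R)$.
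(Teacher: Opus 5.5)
There is a genuine gap, and it starts with how you read the domain. $\Theta^\omega\cap V_\Theta$ is a set of $\omega$-sequences of \emph{ordinals} below $\Theta$; intersecting with $V_\Theta$ only restricts to the sequences bounded below $\Theta$. It is not $(V_\Theta)^\omega$. The paper's use of the theorem confirms this. It takes $s\in\delta^\omega$ bounded in $\delta$, concludes that a $\Delta^2_1(\pi(s))$ witness is ordinal definable from $s$, and applies $j_\infty$ to $s$, which only makes sense for sets of ordinals.

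Under your reading the theorem is vacuous. Every set of reals of $L(\Gamma,\R)$ already belongs to $V_{\omega+2}\subseteq V_\Theta$. So $\sigma\mapsto\sigma(0)$ (when $\sigma(0)\subseteq\R$, and $\emptyset$ otherwise) is a $\Delta_0$-in-$\R$ surjection, with no use of $\AD^+$ or of $\Omega$ being a limit. That should signal that the intended domain is different. Your Steps 1 and 2 are correct as far as they go, and the $\Sigma_1$ rendering of $A=p[T]$ is a sensible ingredient. But they only deliver a tree $T_A\subseteq\omega^{<\omega}\times\kappa^{<\omega}$, an object of size $\kappa$, not a countable sequence of ordinals.

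Your fallback, letting $\sigma(0)$ code the tree through a fixed pairing function, does not work. Pairing codes finite tuples of ordinals, not arbitrary subsets of $\kappa$. In fact no $\pi$ that depends only on a single ordinal coordinate can be onto. Choosing, for each real $x$, the least $\gamma$ with $\pi(\gamma)=\{x\}$ would well-order $\R$, contradicting $\AD$.

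So the countably many coordinates must carry the set in an essential way. What has to be shown is that every $A\in P(\R)\cap L(\Gamma,\R)$ can be recovered from some bounded $s\in\Theta^\omega$, uniformly and by a $\Sigma_1$ procedure over $\{\R\}$. In particular, every such $A$ must be ordinal definable in $L(\Gamma,\R)$ from a countable sequence of ordinals. That is the entire content of the theorem, and it is where the hypothesis that $\Omega$ is a limit has to do real work, not merely give that all sets are Suslin.

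One natural reduction uses Moschovakis' Coding Lemma. A tree on $\omega\times\kappa$, coded as a subset of $\kappa$, is definable from a real together with any prewellordering of length $\kappa$. So it would suffice to produce, for cofinally many $\kappa<\Theta$, prewellorderings of length at least $\kappa$ obtained uniformly and $\Sigma_1$-definably from countable sequences of ordinals. Producing these along the $\Theta$-sequence is exactly the step your proposal does not address.
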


\begin{definition}
    For a given partial order $\mathbb{P}$, define a topology on the set of all filters $G\subseteq \mathbb{P}$ with basic open sets $N_p = \{G\ |\ p\in G\}$. In this topology a set E is dense if for any $p\in\mathbb{P}$ there is $G\in E$ such that $p\in G$ and is open if for any $G\in E$ there is $p\in G$ such that $N_p \subseteq E$.
\end{definition}

\begin{theorem} \label{comeager}
    Suppose $\Gamma$ is a pointclass closed under continuous pre-images such that\\
    \\
    \centerline{$L(\Gamma,\R)\models \AD^{+}$.}
    \\
    \\
    Let $\Theta = \Theta^{L(\Gamma,\R)}$ and suppose that $a\in P_{\omega_1}(\Theta^\omega)\cap V_\Theta$, then for $\mathbb{P}\in HOD^{L(\Gamma,\R)}(a)$ countable in V and X a comeager set of filters of $\mathbb{P}$ that is ordinal definable in $L(\Gamma,\R)$ with parameter a $\cup$ \{a\}, then any $HOD^{L(\Gamma,\R)}(a)$-generic filter of $\mathbb{P}$ is in X.

\end{theorem}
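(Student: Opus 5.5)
The plan is to reduce the statement to a forcing computation inside $HOD^{L(\Gamma,\R)}(a)$, using an ordinal definable code for $X$. Since $\mathbb{P}$ is countable in $V$, the space of filters on $\mathbb{P}$ is a closed subspace of $2^{\mathbb{P}}$, so it is essentially a Polish space. Thus $X$ is, up to a recursive coding, a set of reals in $L(\Gamma,\R)$. The naive argument would write $X\supseteq\bigcap_n D_n$ with the $D_n$ open dense and note that a generic filter meets each $D_n$. The problem is that the $D_n$ need not lie in $HOD(a)$, and choosing them requires countable choice over sets of reals that are not ordinal definable. So I will instead work with an $\infty$-Borel code for $X$ that is ordinal definable from $a$.

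The first step is to obtain such a code. Under $\AD^+$ every set of reals in $L(\Gamma,\R)$ has an $\infty$-Borel code. I would use Theorem \ref{coding}, together with Theorem \ref{SBT} applied to a Suslin co-Suslin set coding $X$, to pick a code canonically from $X$ and $a$: for example, the least pair (ordinal, element of $\Theta^\omega\cap V_\Theta$) in the canonical well order, or a $\Delta^2_1$-selected one. The result is a code $S$ for $X$ that is ordinal definable from $a\cup\{a\}$, hence $S\in HOD^{L(\Gamma,\R)}(a)$. I expect this to be the main obstacle. The difficulty is ensuring that the selection of $S$ uses only ordinal parameters and $a$, and that $S$ is a set in $HOD(a)$ rather than merely OD. If the $\Sigma_1$-definability of the surjection $\pi$ from $\{\R\}$ is not enough, I would fall back on the Vopenka algebra for OD$(a)$ subsets of the filter space, which lies in $HOD(a)$.

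The second step is to interpret the code $S$ inside $HOD(a)$. I would let $\dot G$ be the canonical name for the generic filter and compute, in $HOD(a)$, the Boolean value $b=\llbracket \dot G\in B_S\rrbracket$ by recursion along $S$. Here the basic sets $N_p$ have value $p$, complements go to complements, and well-ordered unions go to sups in the regular open algebra of $\mathbb{P}$. Standard absoluteness of $\infty$-Borel sets then gives, for any $H$ generic over $HOD(a)$, that $H\in X$ if and only if $b\in H$. This is because the interpretation of $S$ in $HOD(a)[H]$ agrees with $X$ on $H$, since the filter $H$ is a real coded in $L(\Gamma,\R)$.

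The third step is to show $b=1$. Suppose not, and pick $p\leq\neg b$. Every filter through $p$ that is generic over $HOD(a)$ then lies outside $X$. Since $\mathbb{P}$ and $P(\mathbb{P})\cap HOD(a)$ are countable in $V$, the filters through $p$ that are $HOD(a)$-generic form a comeager subset of $N_p$. Hence $N_p\setminus X$ is comeager in $N_p$, which contradicts the comeagerness of $X$. Therefore $b=1$, and every $HOD^{L(\Gamma,\R)}(a)$-generic filter of $\mathbb{P}$ lies in $X$.
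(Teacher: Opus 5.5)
The paper states this result without proof (it is one of the imported determinacy facts), so there is no internal argument to compare with. Judged on its own, your Steps 2 and 3 are the standard forcing-plus-category argument and are fine, with one missing justification. The countability of $P(\mathbb{P})\cap HOD^{L(\Gamma,\R)}(a)$ in $V$ does not follow from the countability of $\mathbb{P}$ alone. It uses that $HOD(a)$ is well-orderable in $L(\Gamma,\R)$ (after fixing an enumeration of $a$) and that under $\AD$ every well-orderable set of reals is countable. You are right that Step 1 is the crux, and that is where the genuine gap lies. It carries all of the determinacy content, and none of the routes you suggest produces a code for $X$ lying in $HOD(a)$.

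\emph{The well-order route.} There is no well-order of $\Theta^\omega$ in $L(\Gamma,\R)$ at all, since it contains $\omega^\omega$, so ``the least pair'' is meaningless. Also, an $s$ with $\pi(s)=X$ from Theorem \ref{coding} only makes things OD from $s$, not from $a\cup\{a\}$.

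\emph{The Solovay basis route.} Theorem \ref{SBT} needs a Suslin co-Suslin parameter in which the family of codes is $\Sigma^2_1$. Under $\ADR$ (where the paper applies the result) $X$ is such a set. Under the stated hypothesis $\AD^+$ it need not be: in $L(\R)$ only the $(\Delta^2_1)^{L(\R)}$ sets are Suslin co-Suslin, and an OD comeager set can code a $(\Sigma^2_1)^{L(\R)}$-complete set on a meager perfect set.

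\emph{The coding of the filter space.} Even under $\ADR$, your identification of the filter space with a set of reals goes through an enumeration of $\mathbb{P}$. But $\mathbb{P}$ is countable in $V$ and in general not in $HOD(a)$, e.g.\ $\mathbb{P}=\mathrm{Coll}(\omega,\omega_1^{HOD(a)})$. So a code selected for the coded set is OD from the enumeration and need not lie in $HOD(a)$. You need a code for subsets of $\mathbb{P}$ itself, e.g.\ obtained through $\mathrm{Coll}(\omega,\mathbb{P})$ and its homogeneity.

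\emph{The Vop\v{e}nka fallback.} This does not close the gap either. It is true that $H\in X$ iff $X\in G_H$. But the part of $G_H$ you can recover from $H$ inside $HOD(a)[H]$ is its trace on the complete subalgebra generated by the sets $N_p$. Showing that $X$ (restricted to the generic filters) belongs to that subalgebra is essentially the statement you are proving.

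So what your argument really needs is an $\infty$-Borel code for $X$ in $HOD^{L(\Gamma,\R)}(a)$. This is where $\AD^+$ must enter essentially, and it has to be proved or cited rather than extracted from Theorems \ref{coding} and \ref{SBT} as sketched.
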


\begin{theorem}
    Note $\mu$ the club filter of $P_{\omega_1}(\R)$.
    Suppose $\ADR$ holds and $\Theta$ is regular, then $\mu$ is a countably complete ultrafilter.
\end{theorem}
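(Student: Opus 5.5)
The plan is to follow Solovay's classical argument. The proof has two parts: countable completeness, which only needs a fragment of choice, and ultrafilterness, which uses determinacy of games on reals.

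For countable completeness, I first note that $\ADR$ together with the regularity of $\Theta$ gives $\AD^+$ by the theorem quoted above, and hence $\DC$, since $\DC$ is part of $\AD^+$. I will represent $\mu$ concretely: a set $A\subseteq P_{\omega_1}(\R)$ is in $\mu$ iff there is $F:\R^{<\omega}\rightarrow \R$ such that every countable $\sigma\neq\emptyset$ closed under $F$ lies in $A$. This is the usual equivalence between the club filter and the filter generated by closure sets of functions, and it goes through in $ZF+\DC$. Given $A_n\in\mu$ for $n<\omega$, I use countable choice, available from $\DC$, to pick witnessing functions $F_n$. I then amalgamate them into a single $F$, for instance by letting $F$ applied to the code $\langle n\rangle^\frown s$ return $F_n(s)$, with a coding of $\omega\times\R^{<\omega}$ into $\R^{<\omega}$ that is absorbed by closure. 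Every $\sigma$ closed under $F$ is closed under each $F_n$ and so lies in $\bigcap_n A_n$. This shows $\bigcap_n A_n\in\mu$. Properness of $\mu$ is clear, because $\R$ has countable subsets closed under any given $F$: close a single real under $F$ in $\omega$ steps.

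For the ultrafilter property, fix $A\subseteq P_{\omega_1}(\R)$ and consider the game $G_A$ in which players I and II alternately play reals $x_0,x_1,x_2,\dots$. Player II wins iff $\{x_i\,|\,i<\omega\}\in A$. This is a game of length $\omega$ on reals, so it is determined by $\ADR$. Suppose first that II has a winning strategy $\tau$. Define $F$ so that $F(x_0,\dots,x_{2k})=\tau(x_0,\dots,x_{2k})$, with moves merged into a single real where needed. Let $\sigma$ be countable, nonempty and closed under $F$. Enumerate $\sigma$ as $y_0,y_1,\dots$, with repetitions allowed, and let I play $y_0,y_1,\dots$ against $\tau$. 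By closure, all of II's answers lie in $\sigma$, and since I has enumerated all of $\sigma$, the set of reals played is exactly $\sigma$. Because $\tau$ wins, $\sigma\in A$, and so $A\in\mu$. If instead I has a winning strategy, the symmetric argument shows $P_{\omega_1}(\R)\setminus A\in\mu$. This time II plays the enumeration of $\sigma$ and I's moves stay inside $\sigma$.

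I expect the main care to be needed in two places. The first is the bookkeeping showing that the set of played reals is exactly $\sigma$; this uses closure of $\sigma$ under the strategy together with $\sigma$ being nonempty. The second is making sure that the choice of enumerations and of witnessing functions uses only $\DC$, which is guaranteed here by $\AD^+$. The regularity of $\Theta$ enters only through this route to $\DC$. Everything else is routine once $\ADR$ determines $G_A$.
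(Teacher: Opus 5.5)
The paper gives no proof of this statement; it is quoted as a background result (Solovay's theorem). What you wrote is Solovay's standard game argument, so the route is right, and the ultrafilter half is correct as written. Two of your side justifications are wrong as stated, although both can be repaired without changing the outcome.

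First, the identification of the club filter with the filter generated by the sets $C_F=\{\sigma\neq\emptyset : F[\sigma^{<\omega}]\subseteq\sigma\}$ is not a theorem of $\mathsf{ZF}+\DC$. Kueker's argument must choose a member of the club above every finite set of reals, which is a choice over continuum many sets. In fact it fails in $L(\R)$ under $\AD$, where $\DC$ holds. There the club $\{\sigma : \forall x\in\sigma\ \exists y\in\sigma\ y\notin OD(x)\}$ contains no $C_F$: if $F$ is $OD(z)$, the $F$-closure of $\{z\}$ consists of $OD(z)$ reals.

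Under $\ADR$ you can get the equivalence from uniformization, or you can bypass it. Each $C_F$ is a club, and the club filter is proper: two clubs meet, via an alternating chain built with $\DC_\R$ after coding countable sets of reals by enumerations. So once the filter generated by the $C_F$'s is shown to be an ultrafilter, it coincides with the club filter.

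Second, selecting the witnesses $F_n$ is countable choice for sets of sets of reals. $\AD^+$ as usually defined contains only $\DC_\R$, which does not directly provide this. You can derive it from $\mathrm{cf}(\Theta)>\omega$. The least Wadge ranks of witnesses are bounded below $\Theta$, so all needed witnesses are continuous preimages of one fixed set, and the choice reduces to $\AC_\omega(\R)$.

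It is cleaner to avoid choice altogether with a combined game. Player I opens by announcing $n$ together with his first real, and II wins iff the reals played form a set in $A_n$. A winning strategy for I would fix $n$ and then be a winning strategy for I in $G_{A_n}$. By your own argument, that would put the complement of $A_n$ in $\mu$, which is incompatible with $A_n\in\mu$. So II wins, and her strategy yields all the $\tau_n$ uniformly; you then amalgamate them into a single $F$ exactly as you did with the $F_n$. This gives countable completeness from $\ADR$ alone, so the regularity of $\Theta$ is not actually needed for this theorem.
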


From this theorem we pose the following :

\begin{definition}
    In the context of the previous theorem we define an embedding\\
    \centerline{$j_\mu : \cup\{L[S]\ |\ S\subset Ord\} \rightarrow V$}
    where $j_\mu | L[S]$ is the embedding associated to the ultrapower $\{f : P_{\omega_1}(\R) \rightarrow L[S]\ |\ f\in V\}/\mu$.
\end{definition}

Note that this embedding is not elementary, we are only interested in the fact that it is ordinal definable (since $\mu$ is definable) and its relation with the next result.

\begin{theorem} \label{coll embedding}
    Suppose $\Gamma$ is a pointclass closed under continuous pre-images such that
    \\
    \\
    \centerline{$L(\Gamma,\R)\models \ADR + \Theta\ is\ regular$}
    \\
    \item Suppose $G\subseteq Coll(\omega,\R)$ is $L(\Gamma,\R)$-generic, then there is a generic elementary embedding\\
    $j_G : L(\Gamma,\R) \rightarrow N \subseteq L(\Gamma,\R)[G]$
    such that :
    \begin{enumerate}
        \item $N^\omega \subseteq N$ in $L(\Gamma,\R)[G]$.
        \item for any set of ordinals S in $L(\Gamma,\R)$,
    $j_G | L[S] = j_\mu |L  [S]$.
    \end{enumerate}
    
\end{theorem}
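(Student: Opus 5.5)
The plan is to build $j_G$ from canonical extensions of sets of reals, and to use the ultrafilter $\mu$ only to say where the ordinals go. Work in $L(\Gamma,\R)\models\ADR+\Theta$ regular, so that $\AD^+$ holds.

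By the theorem of Sargsyan, every $A\in P(\R)\cap L(\Gamma,\R)$ is $\Theta$-universally Baire. Since $Coll(\omega,\R)$ has size $<\Theta$, each such $A$ has a canonical interpretation $A^G\subseteq\R^{L(\Gamma,\R)[G]}$, computed from any pair of trees projecting to $A$ and its complement that are absolutely complementing for the collapse. I set $j_G(A)=A^G$ and $j_G(\R)=\R^{L(\Gamma,\R)[G]}$. The target model will be
$$N=L(\Gamma^G,\R^{L(\Gamma,\R)[G]}),\qquad \Gamma^G=\{A^G\mid A\in\Gamma\}.$$
Since every set is $<\Theta$-homogeneous Suslin, the trees can be chosen homogeneous. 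Then $A\mapsto A^G$ commutes with continuous preimages, complements and real quantifiers, because these are witnessed by tree constructions that are absolute to the collapse.

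To define $j_G$ on all of $L(\Gamma,\R)$, I use that every element is definable there from a finite set of ordinals, a real and a set of reals. So I write $x=\tau^{L(\Gamma,\R)}(\vec\xi,r,A)$ and put
$$j_G(x)=\tau^{N}\bigl(j_\mu(\vec\xi),r,A^G\bigr).$$
This makes clause (2) automatic on ordinals. For a set of ordinals $S$, the same definition applied inside $L[S]$ gives $j_G\restriction L[S]=j_\mu\restriction L[S]$, provided $j_G(S)=j_\mu(S)$, and I intend to prove that equation directly. The point is that $j_\mu(S)$ is ordinal definable from $S$ via $\mu$, and by Theorem~\ref{SBT} the relevant Suslin representations can be taken $\Delta^2_1$ in a parameter coding $S$. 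Theorem~\ref{comeager} then shows that the HOD-generic filters obtained from $G$ land in the comeager ordinal definable set of filters that evaluate these representations correctly, and this should identify $j_G(S)$ with the ultrapower value $\{\xi\mid\{\sigma\mid \xi\in\text{(image of }S\text{ at }\sigma)\}\in\mu\}$.

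The main obstacle is showing that $j_G$ is well defined and elementary. Both reduce to a transfer statement: if $L(\Gamma,\R)\models\varphi(\vec\xi,r,A)$, then $N\models\varphi(j_\mu(\vec\xi),r,A^G)$. My approach would be to use Theorem~\ref{coding} (which needs $\Omega$ limit, following from $\ADR$) to reduce $\varphi$ to a $\Sigma^2_1$-type statement about a set of reals in the same Wadge rank region. I would then reflect it to a countable hull and use that $\mu$-almost every countable $\sigma\subseteq\R$ behaves like $\R$; since $\mu$ is a normal countably complete ultrafilter, the ordinals are moved exactly as $j_\mu$ moves them. Finally, $\R^V$ is countable in $L(\Gamma,\R)[G]$ and is a limit of the $\sigma$'s given by initial segments of $G$. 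That limit should give the transfer, with homogeneity of $Coll(\omega,\R)$ making the answer independent of $G$ and hence giving well-definedness.

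Closure $N^\omega\subseteq N$ in $L(\Gamma,\R)[G]$ I expect to be routine. An $\omega$-sequence of elements of $N$ in $L(\Gamma,\R)[G]$ has a name in $L(\Gamma,\R)$. By $\DC$ together with the coding surjection of Theorem~\ref{coding}, this name is coded by a single set of reals $B$ and a countable sequence of ordinals below $\Theta$. The sequence is then recovered in $N$ from $B^G$, from $G$ (which is a real of $L(\Gamma,\R)[G]$, so in $N$) and from $j_\mu$ of the ordinal parameters, which lies in $N$ because $j_\mu$ is continuous at $\omega$ by countable completeness of $\mu$.
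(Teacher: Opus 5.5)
The paper does not prove this statement. It lists it among background results from Woodin's book [6] and uses it as a black box, so your proposal has to stand on its own. Your choice of target, $j_G(A)=A^G$ and $N=L(\Gamma^G,\R^{L(\Gamma,\R)[G]})$, matches how the paper later uses the theorem ($j_\infty(A)=p[j_\mu(T_A)]$ is the generic interpretation). However, there is a genuine gap exactly at the step you call the main obstacle.

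The transfer ``$L(\Gamma,\R)\models\varphi(\vec\xi,r,A)\Rightarrow N\models\varphi(j_\mu(\vec\xi),r,A^G)$'' is the whole content of the theorem, and the mechanism you sketch for it does not work:

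\begin{enumerate}
\item Turning ``$\mu$-almost every $\sigma$ behaves like $\R$'' into a transfer principle amounts to a {\L}o\'s theorem for the ultrapower of $L(\Gamma,\R)$ by $\mu$ (or a choice of hulls for $\mu$-almost every $\sigma$). This fails without choice. Already ``$[\mathrm{id}]_\mu$ is countable'' would require choosing an enumeration of $\sigma$ for $\mu$-almost every $\sigma$, and normality plus countable completeness would make such enumerations $\mu$-a.e.\ constant, contradicting fineness. This is precisely why the paper defines $j_\mu$ only on $\bigcup_S L[S]$, where a definable wellorder supplies witnesses.
\item The ``limit of the $\sigma$'s given by initial segments of $G$'' is not meaningful, since initial segments of $G$ are finite. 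What is really needed is that $\R$ is a $j(\mu)$-typical countable set of reals from the point of view of $N$, and that presupposes $j$.
\item What the trees genuinely give is $(H_{\omega_1},A)\prec(H_{\omega_1}^{L(\Gamma,\R)[G]},A^G)$, hence upward transfer of $\Sigma^2_1$ statements with witnesses of the form $B^G$. The downward direction, which you need for negations and for well-definedness, requires control over all sets of reals of $N$, and many of them are not canonical extensions. For a new real $x$, $\{x\}$ is a continuous preimage of the canonical extension of the diagonal, yet $\{x\}\neq B^G$ for every $B$, because $B^G\cap\R=B$. So $\{A^G: A\in\Gamma\}$ is not closed under continuous preimages with new parameters and is not $j_G(\Gamma)$.
\item Nothing links the two halves of your definition. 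Declaring $j_G\upharpoonright\mathrm{Ord}=j_\mu\upharpoonright\mathrm{Ord}$ forces, for example, $\Theta^N=j_\mu(\Theta)$ and that the Wadge rank of $A^G$ in $N$ is $j_\mu$ of that of $A$. No argument for either is offered.
\end{enumerate}

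The remaining steps are also only asserted.

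\begin{enumerate}
\item Theorem \ref{comeager} concerns posets in $HOD^{L(\Gamma,\R)}(a)$ that are countable in $L(\Gamma,\R)$, which $Coll(\omega,\R)$ is not. Even combined with Theorem \ref{SBT}, it does not identify $\tau^N(j_\mu(\vec\xi),r,A^G)$ with $j_\mu(S)$. A more workable order is the reverse of yours. First obtain some elementary $j$ with $j(A)=A^G$ and $N^\omega\subseteq N$. Then $\R$ is countable in $N$ and closed under $j(F)$ for every $F:\R^{<\omega}\to\R$ in $L(\Gamma,\R)$, since $j(F)$ agrees with $F$ on $\R$. So $\{C : \R\in j(C)\}$ contains the club filter and hence equals $\mu$. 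The factor map $k([f]_\mu)=j(f)(\R)$ then satisfies $k\circ j_\mu=j\upharpoonright L[S]$, and clause (2) follows once $k$ is shown to be the identity.
\item Your closure argument is not routine either. The name lives in $L(\Gamma,\R)$ but names elements of $N$, which itself depends on $G$. It is not in general coded by one set of reals and ordinals below $\Theta$. Inside $N$, ordinal parameters are only available through their $j_G$-images.
\item ``$Coll(\omega,\R)$ has size $<\Theta$'' is false under $\AD$, since $\R$ is not wellorderable. So the quoted $\Theta$-universal Baireness does not apply verbatim, and even the existence of $A^G$ needs an extra argument.
\end{enumerate}
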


In particular, if S is a set of ordinals then $j_G(S)\in V$ and is OD[S].
\\
\\
We also note that by the universal property of the levy collapse any forcing of size $\R$ embeds into $coll(\omega,\R)$, as a result any extension by $coll(\omega,\R)$ will contain a $\pmax$ filter.
\\
\\
Recall that $\Q_{<\delta}$ is the stationary tower forcing below $\delta$, we only use it in a ZFC context. The conditions of $\Q_{<\delta}$ are stationary sets on  $[X]^\omega$ for some $X\in V_\delta$. 

\begin{theorem}
    If $G\subseteq \Q_{<\delta}$ is V-generic and $\delta$ is Woodin, then there is a generic elementary embedding $j_G : V \rightarrow N \subseteq V[G]$ such that :
    \begin{enumerate}
        \item $crit(j_G)=\omega_1$.
        \item $j_G(\omega_1)=\delta$.
        \item $N^{\omega}\subseteq N$ in $V[G]$.
    \end{enumerate}
\end{theorem}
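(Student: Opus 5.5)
The plan is to build $j_G$ as a generic ultrapower of $V$ by the tower filter $G$, following Woodin's standard argument. In $V[G]$, consider all functions $f\in V$ with $dom(f)=[X]^\omega$ for some $X\in V_\delta$. Set $f\sim g$ iff $\{M\in[X\cup X']^\omega \mid f(M\cap X)=g(M\cap X')\}\in G$, and define $\in_G$ in the same way. Let $N$ be the resulting ultrapower and put $j_G(x)=[c_x]$, where $c_x$ is the constant function. Since $\Q_{<\delta}$ is ordered by projection and restriction of stationary sets, Łoś' theorem follows from a genericity/normality argument. Given $S\in\Q_{<\delta}$ forcing $N\models\exists y\,\varphi(y,[f])$, one thins $S$ to a stationary set on which a witness can be chosen pointwise, using $\AC$ in $V$. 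The usual diagonal-intersection (normality) lemma for the tower then yields that $[id_X]=j_G[X]$ for every $X\in V_\delta$. This gives the embedding $j_G:V\to N\subseteq V[G]$, initially only as a possibly ill-founded structure.

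Next I will compute $crit(j_G)$ and $j_G(\omega_1)$, assuming well-foundedness for the moment. For $\alpha<\omega_1$, club many countable $M$ contain $\alpha$ as a subset. So any $f$ below $c_\alpha$ is regressive and, by normality of the club filter on $\omega_1$ (Fodor), is $G$-equal to some constant; hence $j_G(\alpha)=\alpha$. The function $M\mapsto M\cap\omega_1$ represents an ordinal below $j_G(\omega_1)$ but above every $\alpha<\omega_1$, so $crit(j_G)=\omega_1$.

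For the lower bound $j_G(\omega_1)\ge\delta$, fix $\gamma<\delta$. The class $[id_\gamma]=j_G[\gamma]$ lies in $N$, has order type $\gamma$, and is countable in $N$ because it is represented by a function with countable values. Hence $\gamma<j_G(\omega_1)$. For the upper bound, every ordinal below $j_G(\omega_1)$ is $[f]$ for some $f:[X]^\omega\to\omega_1$ with $X\in V_\delta$. Choosing an inaccessible $\kappa<\delta$ with $X\in V_\kappa$, comparing $f$ with $M\mapsto otp(M\cap\kappa)$ shows $[f]<\kappa$. So $j_G(\omega_1)=\delta$.

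The main obstacle is showing that $N$ is well-founded and that $N^\omega\subseteq N$ in $V[G]$; this is exactly where Woodinness of $\delta$ enters. I would first show that $V[G]$ correctly computes $j_G(x)$ for $x\in V_\delta$, and then prove the key density lemma. Given countably many maximal antichains $\langle A_n\mid n<\omega\rangle$ of $\Q_{<\delta}$ in $V$, every condition $S$ has an extension $S'$ below which each $A_n$ is met ``semi-generically''. Concretely, $S'$ should consist of countable $M$ that, for each $n$, belong to some $T\in A_n\cap M$. I would obtain such $S'$ by taking a $\kappa<\delta$ that is $\gamma$-strong for the relevant antichains (using Woodinness via a $j$ with critical point $\kappa$ and $V_{j(\kappa)}$ large enough). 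One then reflects, showing that stationarily many $M$ extending elements of $S$ have $M\cap V_\kappa$ capturing an element of each $A_n\cap V_\kappa$, where $A_n\cap V_\kappa$ is maximal below $\kappa$.

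Given this lemma, a countable sequence $\langle[f_n]\rangle\in V[G]$ is decided by such antichains. So the sequence is represented by the single function $M\mapsto\langle f_n(M\cap X_n)\rangle$, which gives $N^\omega\subseteq N$. Well-foundedness then follows, since any descending $\omega$-sequence would lie in $N$ and, by elementarity, contradict well-foundedness of $V$. Equivalently, one can argue directly with the same semi-genericity: a descending sequence would yield a condition forcing that a fixed countable $M$ carries a descending sequence of ordinals.
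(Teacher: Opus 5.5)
The paper does not prove this theorem; it quotes it as standard background (Woodin's stationary tower theorem). So the comparison is with the standard argument. Your outline follows it: generic ultrapower, \L{}o\'s, normality $[\mathrm{id}_X]=j_G[X]$, and Woodinness entering only through the semi-genericity lemma that gives $N^\omega\subseteq N$ and hence well-foundedness. The computations of $\mathrm{crit}(j_G)=\omega_1$ and of $j_G(\omega_1)\ge\delta$ are fine. There is, however, a genuine gap in the upper bound $j_G(\omega_1)\le\delta$.

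You claim that for an inaccessible $\kappa<\delta$ with $X\in V_\kappa$, comparing $f$ with $M\mapsto\mathrm{otp}(M\cap\kappa)$ gives $[f]<\kappa$. Since $[M\mapsto\mathrm{otp}(M\cap\kappa)]=\mathrm{otp}(j_G[\kappa])=\kappa$, this requires $\{M\in[V_\kappa]^\omega : f(M\cap X)<\mathrm{otp}(M\cap\kappa)\}\in G$ for every generic $G$. Equivalently, the complementary set must be nonstationary. That is a Chang-type end-extension principle at $\kappa$, and it fails in general. Here is a counterexample under $\diamondsuit$, which adding a Cohen subset of $\omega_1$ forces while preserving Woodinness.
\begin{enumerate}
\item Every club in $[V_\kappa]^\omega$ contains $\mathrm{cl}_F(\alpha)$ for club many $\alpha<\omega_1$, for a suitable $F$.
\item The sets $\mathrm{cl}_F(\alpha)$ are continuous in $\alpha$. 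Hence $\mathrm{otp}(\mathrm{cl}_F(\alpha)\cap\kappa)$ is determined by $R_F\cap\alpha$, where $R_F\subseteq\omega_1$ codes the comparisons between fixed enumerations of the countable sets $\mathrm{cl}_F(\beta)\cap\kappa$.
\item Let $f(\alpha)$ be the order type coded by the $\alpha$-th $\diamondsuit$-guess. Then for every $F$, $f(\alpha)=\mathrm{otp}(\mathrm{cl}_F(\alpha)\cap\kappa)$ on a stationary set of $\alpha$.
\item So $T=\{M\in[V_\kappa]^\omega: f(M\cap\omega_1)\ge\mathrm{otp}(M\cap\kappa)\}$ is stationary, and as a condition $T\Vdash [f]\ge\kappa$.
\end{enumerate}
Thus no $\kappa$ fixed in advance from $X$ works; the bound on $[f]$ has to depend on the condition.

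The standard repair uses the Woodin lemma you already need. For a maximal antichain $A$, a sufficiently $A$-strong $\kappa<\delta$ makes $A\cap V_\kappa$ predense, so $A\subseteq V_\kappa$. Hence $\mathbb{Q}_{<\delta}$ is $\delta$-c.c. and $\delta$ remains a cardinal in $V[G]$. Your lower-bound argument makes every $\gamma<\delta$ countable in $N\subseteq V[G]$. Therefore $\delta=\omega_1^{V[G]}$ and $j_G(\omega_1)=\omega_1^N\le\omega_1^{V[G]}=\delta$.

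Alternatively, argue by density: given $S$ and $f$, take a measurable $\kappa<\delta$ above $\bigcup S$ and $X$, with normal measure $\mu$. Start from a countable $M\prec H_\theta$ with $M\cap\bigcup S\in S$. End-extend it countably many times by ordinals in $\bigcap(\mu\cap M)$. This leaves $M\cap\bigcup S$ unchanged and pushes $\mathrm{otp}(M\cap\kappa)$ above $f(M\cap X)$. The result is a condition $T\le S$ forcing $[f]<\kappa$.
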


\begin{theorem}
    Suppose $Y\in V_\delta$, then for any $G\subseteq \Q_{<\delta}$ V-generic and $S\subseteq [Y]^\omega$ stationary, then $S\in G$ iff $j_G[Y]\in j_G(S)$.
\end{theorem}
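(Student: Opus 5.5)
The plan is to realize $j_G$ concretely as the generic ultrapower of $V$ by $G$, and to reduce the equivalence to Łoś's theorem. Identify the representative of $j_G[Y]$ in that ultrapower. Then the membership $j_G[Y]\in j_G(S)$ unwinds to the statement ``$S\in G$''.

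First we fix the ultrapower presentation. Elements of $N$ are classes $[f]_G$ with $f:[X]^\omega\rightarrow V$ for some $X\in V_\delta$, and $f\in V$. We set $[f]_G\in_N[g]_G$ iff $\{x\in[X\cup X']^\omega : f(x\cap X)\in g(x\cap X')\}\in G$, where sets living on different supports are compared by lifting/projecting. That is, a stationary $T\subseteq[X]^\omega$ is identified with $\{x\in[X']^\omega: x\cap X\in T\}$ for $X\subseteq X'$. We take $j_G(a)=[c_a]_G$ with $c_a$ the constant function. Łoś's theorem holds for this ultrapower because $G$ is generic. Given a condition $T$ forcing $\exists v\,\varphi(v,[f])$ in $N$, we choose a witness function pointwise by choice in $V$, and then use density below $T$. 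I take this presentation as the standard construction behind the previous theorem, so $j_G$ is this map and $N$ is its (well-founded, since $\delta$ is Woodin) transitive collapse.

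Second, and this is the main step, we show that $[\mathrm{id}_Y]_G=j_G[Y]$, where $\mathrm{id}_Y:[Y]^\omega\rightarrow V$ is $x\mapsto x$. For $y\in Y$, the set $\{x\in[Y]^\omega : y\in x\}$ is club, hence in $G$, so $j_G(y)\in[\mathrm{id}_Y]$ and $j_G[Y]\subseteq[\mathrm{id}_Y]$. Conversely, suppose $[f]\in_N[\mathrm{id}_Y]$. Then $f(x)\in x\cap Y$ on some $T\in G$, after projecting to a common support $[X]^\omega$ with $Y\subseteq X$. Take any stationary $T'\leq T$. The map $x\mapsto f(x)$ is regressive on $T'$, so by Fodor's lemma for $[X]^\omega$ (normality of the club filter on $[X]^\omega$) it is constant, with value some $y\in Y$, on a stationary $T''\subseteq T'$. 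Thus the set of conditions forcing $[f]=j_G(y)$ for some $y\in Y$ is dense below $T$. By genericity, $[f]=j_G(y)$. I expect this to be the only delicate point: the pressing-down argument must be carried out on stationary subsets of $[X]^\omega$ for arbitrary supports, and it must be compatible with the lifting/projection identifications.

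Finally, by Łoś, $j_G[Y]=[\mathrm{id}_Y]\in[c_S]=j_G(S)$ iff $\{x\in[Y]^\omega : x\in S\}\in G$. Since $S\subseteq[Y]^\omega$, this set is just $S$ itself, so the condition is $S\in G$. For stationary $S$ this gives the equivalence. If $S$ is not stationary, both sides fail: $S\notin G$, and its complement contains a club, which lies in $G$.
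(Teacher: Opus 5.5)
The paper gives no proof of this statement; it quotes it as a standard fact about the countable stationary tower from the cited literature (Larson's monograph). Your argument is correct and is exactly the standard proof: realize $j_G$ as the generic ultrapower, show $[\mathrm{id}_Y]_G=j_G[Y]$ (one inclusion because clubs lie in $G$, the other by normality/Fodor on $[X']^\omega$ plus density), and conclude by \L{}o\'s that $j_G[Y]\in j_G(S)$ iff $S\in G$.
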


\begin{theorem} \label{tower}
    Let $\delta$ be a Woodin cardinal, A $\subseteq \R$ be a $\kappa$-homogeneous set with $\kappa > \delta$, $G\subseteq \Q_{<\delta}$ be generic and $j_G$ the associated generic embedding, then $j_G (A)=A^G$ where $A^G$ is the canonical interpretation of A in $V[G]$.
\end{theorem}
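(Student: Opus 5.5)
The plan is to realise both $A$ and its complement as projections of trees, then transfer these trees along $j_G$ using closure of $N$ and absoluteness of well-foundedness. Since $A$ is $\kappa$-homogeneous, fix a $\kappa$-homogeneous tree $T$ on $\omega\times\gamma$ with $A=p[T]$ and homogeneity measures $\langle\mu_s\rangle$. Let $\tilde T$ be the Martin--Solovay tree built from these measures, so that $\R\setminus A=p[\tilde T]$. By definition $A^G=p[T]^{V[G]}$. The first step is to show that in $V[G]$ the trees $T$ and $\tilde T$ still project to complementary sets of reals. Here we use $|\Q_{<\delta}|=\delta<\kappa$. The $\kappa$-complete measures $\mu_s$ extend to measures on $V[G]$, so $T$ remains homogeneous there. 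The Martin--Solovay argument, which is a direct limit of ultrapowers by the lifted measures, then shows that $p[\tilde T]^{V[G]}=\R^{V[G]}\setminus p[T]^{V[G]}$. I expect this step to be the main obstacle; I would handle it by quoting the standard fact that homogeneity and the Martin--Solovay construction are preserved by forcing of size less than the completeness of the measures.

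Next, compare $j_G(A)$ with $A^G$. By elementarity, in $N$ we have $j_G(A)=p[j_G(T)]$ and $\R\setminus j_G(A)=p[j_G(\tilde T)]$; in particular $p[j_G(T)]^N\cap p[j_G(\tilde T)]^N=\emptyset$. Because $N^\omega\subseteq N$ in $V[G]$, we get $\R^N=\R^{V[G]}$, and every countable sequence of $V[G]$ lies in $N$.

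Now let $x\in p[T]^{V[G]}$, witnessed by a branch $f$ with $(x,f)\in[T]$. Since $j_G[T]\subseteq j_G(T)$ pointwise on nodes, the pair $(x,j_G\circ f)$ is a branch through $j_G(T)$. This branch is a countable sequence in $V[G]$, so it lies in $N$, and therefore $x\in p[j_G(T)]^N=j_G(A)$. The same argument gives $p[\tilde T]^{V[G]}\subseteq p[j_G(\tilde T)]^N=\R^N\setminus j_G(A)$.

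Combining these inclusions with the complementarity from the first step and $\R^N=\R^{V[G]}$, we obtain
\[
A^G=p[T]^{V[G]}\subseteq j_G(A)\subseteq \R^{V[G]}\setminus p[\tilde T]^{V[G]}=A^G .
\]
Hence $j_G(A)=A^G$. The only genuinely nontrivial input is the first step, the preservation of homogeneity and Martin--Solovay complementation under small forcing. The rest follows from the closure $N^\omega\subseteq N$ and the fact that membership in $p[\cdot]$ is witnessed by countable branches.
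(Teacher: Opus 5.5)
The paper gives no proof of this statement; it quotes it as a standard stationary-tower fact (as in Larson's notes on Woodin's course). Your argument is exactly the standard proof, and it is correct: since $|\Q_{<\delta}|=\delta<\kappa$, the tree $T$ and its Martin--Solovay tree $\tilde T$ still project to complementary sets in $V[G]$, and $N^\omega\subseteq N$ then lets you push branches through $j_G$ to squeeze $j_G(A)$ between $p[T]^{V[G]}$ and $\R^{V[G]}\setminus p[\tilde T]^{V[G]}$. One small remark on your first step: you do not need $T$ to remain homogeneous in $V[G]$. It suffices that disjointness of $p[T]$ and $p[\tilde T]$ is absolute, and that the lifted ultrapowers agree with those of $V$ on ordinals, so the rank function of $T_x$ for $x\notin p[T]$ yields a branch of $\tilde T_x$.
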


We also introduce a generalized version of the stationary tower at $\omega_2$ : $\Q_{<\lambda}^{\omega_2}$. Conditions are stationary sets on $[X]^{\omega_2}$ for some $X\in V_\lambda$.

\begin{theorem}
    If $H\subseteq \Q_{<\lambda}^{\omega_2}$ is V-generic, $\lambda$ is Woodin and the set $\{X\in [\omega_3]^{\omega_2}\ |\ X\cap\omega_3\in\omega_3\}$ is in H, then there is a generic elementary embedding $j_H : V \rightarrow N \subseteq V[H]$ such that :
    \begin{enumerate}
        \item $crit(j_H)=\omega_3$.
        \item $j_H(\omega_1)=\lambda$.
        \item $N^{\omega_2}\subseteq N$ in $V[H]$.
    \end{enumerate}
\end{theorem}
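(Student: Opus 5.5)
The plan is to imitate the standard construction of the generic ultrapower by the countable stationary tower $\Q_{<\delta}$, replacing $[X]^\omega$ by $[X]^{\omega_2}$ throughout. I read clause (2) as $j_H(\omega_3)=\lambda$, which is what the analogy with $\Q_{<\delta}$ (where $j_G(\mathrm{crit})=\delta$) requires; the literal $j_H(\omega_1)=\lambda$ is incompatible with $crit(j_H)=\omega_3$.

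\textbf{Construction and \L o\'s.} Given $H$, I would form the direct limit ultrapower. Its elements are classes of functions $f:[X_f]^{\omega_2}\to V$ with $X_f\in V_\lambda$. Set $f\sim g$ iff $\{Z\in[X_f\cup X_g]^{\omega_2} : f(Z\cap X_f)=g(Z\cap X_g)\}\in H$, and define $\in$ similarly. The usual density arguments give \L o\'s's theorem. The facts needed are that stationarity in $[X]^{\omega_2}$ is preserved under lifting and projecting between $X\subseteq X'$, and a normality (Fodor-type) lemma for $[X]^{\omega_2}$: a regressive function on a stationary set is constant on a stationary subset. Then $j_H(x)=[Z\mapsto x]$ is elementary. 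As in the countable case, $[\mathrm{id}_X]=j_H[X]$, and hence $S\in H$ iff $j_H[X]\in j_H(S)$.

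\textbf{Critical point.} Every $Z$ with $Z\cap\omega_3\in\omega_3$ and $|Z|=\omega_2$ satisfies $\omega_2\subseteq Z\cap\omega_3$. By the displayed hypothesis and normality, $j_H(\alpha)=\alpha$ for all $\alpha\le\omega_2$. Moreover $[Z\mapsto Z\cap\omega_3]$ represents $j_H[\omega_3]\cap j_H(\omega_3)=\omega_3$ as an ordinal below $j_H(\omega_3)$, so $crit(j_H)=\omega_3$.

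\textbf{Closure and the value of $j_H(\omega_3)$.} For $N^{\omega_2}\subseteq N$, take $\langle [f_i] : i<\omega_2\rangle$ in $V[H]$ and use a maximal-antichain argument to decide it below conditions of bounded rank. Woodinness of $\lambda$ gives that antichains are small enough for the relevant supports to sit inside some $V_\kappa$, $\kappa<\lambda$. One then amalgamates into a single $f$ on $[V_\kappa]^{\omega_2}$ with $f(Z)=\langle f_i(Z\cap X_{f_i}) : i\in Z\cap\omega_2\rangle$; since $\omega_2\subseteq Z$ this indexes all $i$. The same ingredients show that $N$ is well-founded. For $j_H(\omega_3)\ge\lambda$: for each $\kappa<\lambda$, the sets $\{Z\in[V_\kappa]^{\omega_2} : |Z\cap\omega_3|=\ldots,\ \mathrm{otp}(Z\cap\kappa)<\omega_3\}$ are dense, giving $\kappa<j_H(\omega_3)$. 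The reverse inequality comes from representing ordinals with functions from $V_\lambda$.

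\textbf{Main obstacle.} I expect the real difficulty to be the uses of Woodinness at $\omega_2$: the $\lambda$-chain condition and well-foundedness, together with the density argument for $j_H(\omega_3)=\lambda$. For the countable tower these come from Woodin's semiproperness/self-genericity arguments. For $[X]^{\omega_2}$ I would try Foreman's generalized stationary tower theory, checking that the hypothesis $\{Z : Z\cap\omega_3\in\omega_3\}\in H$ provides the needed "internally approachable"/closure conditions.
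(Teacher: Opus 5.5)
\textbf{Context and verdict.} The paper does not prove this statement. It quotes it as a background fact about the generalized stationary tower, so there is no argument in the paper to compare yours with. You are right that clause (2) must be read as $j_H(\omega_3)=\lambda$. Your formal part is the standard one: the direct-limit ultrapower, \L o\'s, lifting and normality on $[X]^{\omega_2}$, $[\mathrm{id}_X]=j_H[X]$, and $\mathrm{crit}(j_H)=\omega_3$ because $j_H[\omega_3]=[Z\mapsto Z\cap\omega_3]$ is an ordinal. The gap is that the one step carrying the real content is only asserted, and your last paragraph concedes it.

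\textbf{The missing lemma.} You need that for Woodin $\lambda$ this particular tower is $\lambda$-presaturated: for any $\omega_2$ maximal antichains, densely many conditions are compatible with fewer than $\lambda$ members of each. Three of your claims depend on it: the amalgamation giving $N^{\omega_2}\subseteq N$, the well-foundedness of $N$ (which comes from $\omega$-closure), and the bound $j_H(\omega_3)\le\lambda$.

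This is not a formality. In Woodin's proof for $\mathbb{P}_{<\lambda}$ and $\Q_{<\lambda}$, one fixes a condition $a$ and antichains $\vec A=\langle A_i : i<\gamma\rangle$, and uses an $\vec A$-strong cardinal $\kappa<\lambda$. The point is to show that the following set is stationary: the $Z$ in the lift of $a$ to $V_\kappa$ that capture each $A_i$, i.e.\ contain some $b\in A_i$ with $Z\cap\bigcup b\in b$. That stationary set is the stronger condition you need.

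For $\Q^{\omega_2}_{<\lambda}$ you must produce these capturing sets with three properties:
\begin{enumerate}
\item $|Z|=\omega_2$;
\item $\omega_2\subseteq Z$, so that every index $i<\omega_2$ is covered;
\item $Z\cap\bigcup a$ is unchanged, in particular $Z\cap\omega_3$ is still an ordinal.
\end{enumerate}
Enlarging a set of size $\omega_2$ by Skolem hulls will typically add new ordinals below $\omega_3$, and controlling this is exactly where Woodinness has to be used. Until you prove this capturing lemma for $[X]^{\omega_2}$, or cite presaturation of this specific tower, your closure, well-foundedness and $j_H(\omega_3)$ arguments are unsupported.

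\textbf{Smaller corrections.}
\begin{enumerate}
\item \emph{Upper bound on $j_H(\omega_3)$.} Counting representing functions in $V_\lambda$ only gives $|j_H(\omega_3)|\le\lambda$ in $V[H]$. The right argument is as follows. Each $\kappa<\lambda$ has size $\omega_2$ in $V[H]$, since $j_H[V_\kappa]\in N$ has $N$-cardinality $j_H(\omega_2)=\omega_2$. Also $\lambda$ stays regular by presaturation. Hence $j_H(\omega_3)=\omega_3^N\le\omega_3^{V[H]}=\lambda$, which again uses the missing lemma.
\item \emph{Lower bound.} No density argument is needed for $\lambda\le j_H(\omega_3)$. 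Every $Z$ in every condition has size $\omega_2$, so $\kappa=\mathrm{otp}(j_H[\kappa])=[Z\mapsto\mathrm{otp}(Z\cap\kappa)]<j_H(\omega_3)$ once $N$ is well-founded.
\item \emph{Why $\omega_2\subseteq Z$.} This holds because $Z\cap\omega_3$ is an ordinal of size $\omega_2$, as the displayed set lies in $[\omega_3]^{\omega_2}$. It does not follow from $|Z|=\omega_2$ together with $Z\cap\omega_3\in\omega_3$.
\item \emph{Projections.} A set of size $\omega_2$ can project to a smaller set, so $Z\cap X_f$ need not lie in $[X_f]^{\omega_2}$. Your representing functions should be defined on $P(X_f)$, or you should require $\omega_2\subseteq X_f$ for all supports.
\end{enumerate}
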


\section{The Strong Chang Conjecture in $\pmax$ models}

We improve the following theorem from Woodin.

\begin{theorem}
    Suppose $\Gamma$ is a pointclass closed under continuous pre-images such that
    \\
    \\
    \centerline{$L(\Gamma,\R)\models \ADR + \Theta\ is\ regular$}
    \\
    \item Let $\langle \Theta_\alpha : \alpha < \Omega \rangle$ be the $\Theta-sequence$ of $L(\Gamma,\R)$, pose for each $\delta<\Omega$,
    
    \centerline{$\Gamma_\delta = \{A\subseteq \R\ |\ w(A)< \Theta_\delta\}$}
    
    \item set
    
    \centerline{$N_{\delta}=HOD(\Gamma_\delta)$}
    
    \item if $\delta<\Omega$ is such that 
    \begin{enumerate}
        \item $\delta=\Theta_\delta$.
        \item $N_\delta\models \delta\ is\ regular$.
    \end{enumerate}
    then for $G_0 \subseteq \pmax$ $L(\Gamma,\R)$-generic :
     $N_\delta [G_0]\models ZF+\omega_2-\DC+SCC$.
\end{theorem}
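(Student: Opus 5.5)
We will prove the statement in the cofinal form already prepared by the paper. The route is through Lemma \ref{scc} (respectively its non-cofinal analogue). So we must show that in $N_\delta[G_0]$ there are club many countable $X\subseteq M_0$ which can be extended as described there.

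The first step concerns $N_\delta$ itself. By Theorem \ref{ub} and the hypotheses on $\delta$, $N_\delta$ satisfies $\ADR$ with $\Theta^{N_\delta}=\delta$ regular. Hence $N_\delta=L(P(\R)\cap N_\delta)$ in the relevant sense. Then Theorem \ref{wrp} gives $N_\delta[G_0]\models WRP(\omega_2)$, and the characterization of $\omega_2$-$\DC$ gives $N_\delta[G_0]\models \omega_2$-$\DC$. By the last lemma of Section 1, $N_\delta[G_0]\models WCC^+_{cof}$. What remains is to reflect this weak statement to the club statement of Lemma \ref{scc}. The difficulty is that $WCC^+_{cof}$ gives, for each single $F$, a $G$ whose closure points extend. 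Lemma \ref{scc} instead needs a countable $X$ containing countably many $f$'s, together with one $Y$ closed under all of them.

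Step two is to code functions $\omega_2^{<\omega}\to\omega_2$ by sets of reals. Since $|\R|=\omega_2$ in $N_\delta[G_0]$ and $\pmax$ adds no reals, each such $f$ has a $\pmax$-name coded by a set of reals in $N_\delta$. Such a set has Wadge rank $<\delta$, and by Theorem \ref{coding} it is coded by a countable sequence of ordinals $a\in P_{\omega_1}(\Theta^\omega)\cap V_\Theta$. Given a countable $X$, we amalgamate the countably many $f\in X$ into one $F$ (e.g. by a pairing on indices, so that $F$-closure implies closure under every $f\in X$). We then apply $WCC^+_{cof}$ to $F$. The point is to choose the witness $G$ uniformly and definably. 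By weak homogeneity of $\pmax$ and the Solovay basis theorem \ref{SBT}, the set of names for witnesses $G$ that work is a nonempty $\Sigma^2_1(A)$ collection, so there is a $\Delta^2_1(A)$, hence $OD(a)$, choice of such a $G$. The club is then the set of $X$ closed under this definable assignment $F\mapsto G_F$, intersected with elementarity. Using $\omega_2$-$\DC$ and Lemma \ref{esm}, this is a club in $[M_0]^\omega$.

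Step three is the main obstacle: verifying that for $X$ in this club the extension really exists. The danger is that $X$ need only be closed under $G_F$ for the amalgamated $F$, and this $F$ depends on all of $X$, so it is not itself in $X$. Here I would use the generic embedding of Theorem \ref{coll embedding}. Take $X$ countable and force with $Coll(\omega,\R)$ over $N_\delta$, obtaining $j_G$ that agrees with the $OD$ embedding $j_\mu$ on $L[S]$. This makes $j_G[X]$ a member of $j_G$ of the relevant set. A $\pmax$-filter in the collapse extension can be chosen generic over $HOD^{N_\delta}(a)$, and Theorem \ref{comeager} guarantees that the comeager, $OD(a)$ set of good filters contains it. So the desired $Y$, with $X\cap\omega_2\subsetneq Y$, the same intersection with $\omega_1$, $\sup Y>\alpha$, and closure under every $f\in X$, exists in $N$. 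Its existence then pulls back by elementarity, because the statement about $X$ is absolute for the $\omega$-closed $N$. The cofinal parameter $\alpha$ is carried along since $j_G$ is the identity on $\omega_2$-bounded parameters below the critical point (in the relevant sense), with Theorem \ref{projective stationary} supplying cofinally many end extensions.

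Finally, Lemma \ref{scc} yields $SCC^{cof}$, hence $SCC$, in $N_\delta[G_0]$, and ZF together with $\omega_2$-$\DC$ was obtained in step one.
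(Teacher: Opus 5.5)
\textbf{There is a genuine gap.} Your step three does not resolve the difficulty you correctly isolate there, and as sketched it cannot.

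The embedding $j_G$ of Theorem~\ref{coll embedding} is stated for $L(\Gamma,\R)$, not $N_\delta$. On sets of ordinals it is the ultrapower by the club filter on $P_{\omega_1}(\R)$. It is defined on the ground model and does not act on $N_\delta[G_0]$, where your $X$ and the functions $f\in X$ live. So ``$j_G[X]\in j_G(\dots)$'' and ``pull back by elementarity'' have no content for an arbitrary $X$ in your club.

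Nor can you \emph{choose} a filter generic over $HOD(a)$. The $f\in X$ are evaluated by $G_0$, so Theorem~\ref{comeager} helps only if $G_0$ itself is generic over $HOD(X)$, and you cannot ensure this for a given countable $X$. Also, $X$ is still not closed under $G_{F_X}$, since $F_X\notin X$. Finally, $j_G$ has critical point $\omega_1$, so it does not fix $\alpha<\omega_2$.

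The paper never verifies the extension property directly for members of a club. It argues by contradiction:
\begin{itemize}
\item It assumes the set $S$ of failures is stationary.
\item It forces with the stationary tower $\Q_{<\kappa}$, where $\kappa=\Theta_{\delta+1}$ is Woodin in $HOD^{L(\Gamma,\R)}$ and hence in $N_\delta[G_0][H_0]$, with $S\in G_S$. This gives (Y): $j_S[\omega_2]$ cannot be extended.
\item Separately it obtains (X): $j_\infty[\omega_2]$ can be extended, with closure under $j_\infty(f)$ for every $f$ with $A_f\in\Gamma_\delta$.
\item (X) and (Y) are the same formula and its negation over $\langle H_{\omega_1},B\rangle$. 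Here $B$ is the projection of a weakly homogeneous tree merging trees for all sets in $\Gamma_\delta$, so the two structures are elementarily equivalent, which is a contradiction.
\end{itemize}

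The amalgamation problem is solved by the hypotheses on $\delta$, which your proposal never uses. Since $\Gamma_\delta$ is Wadge bounded in $L(\Gamma,\R)$ by some $\pi(t)$, there is a single $F$ in $L(\Gamma,\R)[G_0]$ (not in $N_\delta[G_0]$), ordinal definable from $t$, with $F(\alpha^\frown\bar u)=f(\bar u)$. This one $F$ codes \emph{all} functions of $N_\delta[G_0]$ at once. $WCC^+_{cof}$, the game, and the Solovay basis theorem are applied to this $F$ in $L(\Gamma,\R)[G_0]$. Taking $X_\infty$ to consist of the $j_\infty$-images of bounded $s\in\delta^\omega$ together with $j_\infty(t)$ makes $\R\cap HOD(X_\infty)=\R$ and $G_0$ generic over $HOD(X_\infty)$, and that is what yields (X).

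Your step one is also wrong. $N_\delta=HOD(\Gamma_\delta)$ is not $L(P(\R)\cap N_\delta)$: it contains $HOD^{L(\Gamma,\R)}$, in particular the Woodin cardinal $\Theta_{\delta+1}$ needed for the tower. So Theorem~\ref{wrp} and the $\omega_2$-$\DC$ characterization cannot be applied to $N_\delta[G_0]$ this way; the paper uses $WRP(\omega_2)$ in $L(\Gamma,\R)[G_0]$.

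As a sanity check, your argument uses only ``$\ADR+\Theta$ regular in an $L(P(\R))$''. It would therefore give $SCC$ after $\pmax$ over $L(\Gamma,\R)$ itself, which the paper explicitly does not claim. The hypotheses on $\delta$ and the passage to $HOD(\Gamma_\delta)$ enter precisely at the step you are missing.
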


Note that the existence of such a $\delta$ is not implied by $\ADR + \Theta$ regular and requires slightly stronger hypotheses. For example $\ADR + \Theta$ is Mahlo suffices. Another remark is that we do not force the Strong Chang Conjecture over a model of the form $L(\Gamma,\R)$ but over a model of the form $HOD(\Gamma_\delta)$. 
\\

We show that in the conclusion SCC can be strengthened to $SCC^{cof}$, the proof is the same where we start with $WRP(\omega_2)$ implies $WCC^+_{cof}$ instead of just $WCC^+$ and show how the parameter $\alpha$ is carried through it.

\begin{theorem} \label{scc pmax}
    Suppose $\Gamma$ is a pointclass closed under continuous pre-images such that
    \\
    \\
    \centerline{$L(\Gamma,\R)\models \ADR + \Theta\ is\ regular$}
    \\
    \item Let $\langle \Theta_\alpha : \alpha < \Omega \rangle$ be the $\Theta-sequence$ of $L(\Gamma,\R)$, pose for each $\delta<\Omega$,
    
    \centerline{$\Gamma_\delta = \{A\subseteq \R\ |\ w(A)< \Theta_\delta\}$}
    
    \item set
    
    \centerline{$N_{\delta}=HOD(\Gamma_\delta)$}
    
    \item if $\delta<\Omega$ is such that 
    \begin{enumerate}
        \item $\delta=\Theta_\delta$
        \item $N_\delta\models \delta\ is\ regular$
    \end{enumerate}
    then for $G_0 \subseteq \pmax$ $L(\Gamma,\R)$-generic :
     $N_\delta [G_0]\models ZF+\omega_2-\DC+SCC^{cof}$.
\end{theorem}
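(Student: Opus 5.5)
We follow Woodin's argument for SCC, carrying the extra parameter $\alpha$ along. The goal is to verify the hypothesis of Lemma \ref{scc} in $N_\delta[G_0]$. The $\omega_2$-$\DC$ part needs no new work. Since $N_\delta\models$ ``$\delta=\Theta$ is regular'' and $N_\delta\models\ADR$, the equivalence theorem for $\omega_2$-$\DC$ applies with $\Gamma_\delta$ in place of $\Gamma$. The same hypotheses give $\AD^+$ in $N_\delta$, and they let us view $V_\delta^{N_\delta}$ as in Theorem \ref{ub}.

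\textbf{Step 1: $WCC^+_{cof}$ in the small model.} We first work in $L(\Gamma_\delta,\R)[G_0]$, or in $L(P(\R)\cap N_\delta)[G_0]$. There Theorem \ref{wrp} gives $WRP(\omega_2)$, and then the lemma ``$WRP(\omega_2)$ implies $WCC^+_{cof}$'' gives $WCC^+_{cof}$. Since $\pmax$ adds no reals, the functions $F:\omega_2^{<\omega}\to\omega_2$ are coded by sets of reals of Wadge rank $<\delta$ via Theorem \ref{coding}. So $WCC^+_{cof}$ holds in $N_\delta[G_0]$ for those $F$ that lie in the small model.

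\textbf{Step 2: reflect from $WCC^+_{cof}$ to the hypothesis of Lemma \ref{scc}.} Fix a countable $X\prec M_0$ in the relevant club, living in $N_\delta[G_0]$. For each $f\in X$ we would like the $G_f$ given by $WCC^+_{cof}$ to be in $X$, and then we would amalgamate the countably many requirements. The plan, as in Woodin's proof, is to show that the set $D_\alpha$ of $Y\supsetneq X\cap\omega_2$ with $Y\cap\omega_1=X\cap\omega_1$, $\sup Y>\alpha$, and $Y$ closed under all $f\in X$ is nonempty. This has three parts.

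First, $D_\alpha$ is a $\Sigma^2_1$-type statement about a countable object. By the Solovay basis theorem (Theorem \ref{SBT}), we may take the $\pmax$-name data and the witnessing sets to be $\Delta^2_1$, hence $OD$ in $N_\delta$.

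Second, we pass to a $Coll(\omega,\R)$-extension and use $j_G=j_\mu$ on $L[S]$ (Theorem \ref{coll embedding}). Here $S$ is an $OD$ set of ordinals coding the relevant $\pmax$-names and the functions in $X$. Since $j_G(S)$ is $OD[S]$ and lies in $V$, and the extension contains a $\pmax$ filter, the required $Y$ can be obtained by applying $WCC^+_{cof}$ inside $j_G(N_\delta)[j_G(G_0)]$ to $j_G[X]$. Then $j_G\restriction\omega_1$ is the identity, so $Y\cap\omega_1$ is preserved.

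Third, we pull $Y$ back using Theorem \ref{comeager}. The set of $HOD(a)$-generic filters producing such a $Y$ is comeager and $OD(a)$, so we can find the filter, and with it $Y$, inside $N_\delta[G_0]$.

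The cofinal parameter is handled by elementarity. $\alpha<\omega_2$ is fixed before the embedding is applied. Because $j_G\restriction\omega_2^{N_\delta[G_0]}$ is computed by $j_\mu$, the condition $\sup Y>\alpha$ becomes $\sup Y>j_G(\alpha)\ge\alpha$, and pulling back preserves this. This gives cofinally many $\alpha$ for each $X$ in the club.

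\textbf{Main obstacle.} The delicate step is the second part of Step 2. We must check that the witness $Y$ produced in the generic ultrapower, together with the requirement $\sup Y>\alpha$, really lies in $N_\delta[G_0]$ and not only in the $Coll(\omega,\R)$-extension. This needs the definability of $j_\mu$ and the fact that $\delta$ is regular in $N_\delta$. Regularity is what bounds the $\omega_2$-many $\alpha$'s, and it keeps the countably many $f\in X$ coded below $\Theta_\delta$. I would first attempt the reflection for a single $\alpha$ as an $OD$ comeager-set argument, and then note that the club of $X$ is independent of $\alpha$. After that, Lemma \ref{scc} yields $SCC^{cof}$ in $N_\delta[G_0]$.
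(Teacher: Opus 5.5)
There is a genuine gap in your Step 2. Nothing in it produces a single $Y$ closed under \emph{all} $f\in X$ at once, and that is exactly what separates $WCC^+_{cof}$ from the hypothesis of Lemma \ref{scc}.
\begin{itemize}
\item $WCC^+_{cof}$ handles one function at a time. The countably many $f\in X$ cannot be amalgamated by coding them into one function: such a coding function depends on $X$, while the club has to be chosen first. Nor does one function in $N_\delta[G_0]$ help, since by diagonalization no $F:\omega_2^{<\omega}\to\omega_2$ in $N_\delta[G_0]$ codes all of that model's functions.
\item Your use of $j_G$ adds nothing. Insofar as $j_G$ applies to $X$ at all, $X$ is countable, so $j_G(X)=j_G[X]$. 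Applying $WCC^+_{cof}$ to it in the target model and pulling back is just the elementary image of the ground-model statement about $X$.
\item Theorem \ref{comeager} does not place any $Y$ inside $N_\delta[G_0]$. It only says that filters generic over $HOD(a)$ belong to $OD(a)$ comeager sets.
\end{itemize}

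The paper's proof uses two ideas that are absent from your plan.

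First, it works in the larger model $L(\Gamma,\R)[G_0]$, not the small one. There $\Gamma_\delta$ is Wadge-bounded by some $\pi(t)$; this is where $\delta<\Omega$ matters. Since $|\R|=\omega_2$, a single name $\tau$, OD from $t$, for $F(\alpha^\frown\bar u)=f_\alpha(\bar u)$ codes every function of $N_\delta[G_0]$.
\begin{itemize}
\item $WCC^+_{cof}$ for $\tau$ is turned into a winning strategy for II in a game on reals, and Theorem \ref{SBT} makes that strategy OD.
\item It is then evaluated via Theorem \ref{comeager} in the $Coll(\omega,\R)$-extension on $X_\infty$. There $\R\cap HOD(X_\infty)=\R$, so the countable set in play is $j_\infty[\omega_2]$, an image of all of $\omega_2$, not your countable $X$.
\item The result is (X): for every $p_0<G_0$ and every $\alpha$ there is $Z\supsetneq j_\infty[\omega_2]$ with $Z\cap j_\infty(\omega_1)=\omega_1$, $\sup Z>\alpha$, and $Z$ closed under every $j_\infty(f)$ with $A_f\in\Gamma_\delta$.
\end{itemize}

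Second, the paper never constructs $Y$ in $N_\delta[G_0]$. It argues by contradiction:
\begin{itemize}
\item Assume the set $S$ of counterexamples is stationary.
\item Collapse to obtain choice, then force with the stationary tower $\Q_{<\Theta_{\delta+1}}$ with $S\in G_S$. This yields (Y), the negation of (X) for $j_S$.
\item Both (X) and (Y) are the same first-order statement over $H_{\omega_1}$, with a predicate $B$ projected from a merged weakly homogeneous tree for $\Gamma_\delta$.
\item The structures $\langle H_{\omega_1}^{N_S},B\cap N_S\rangle$ and $\langle H_{\omega_1}^{L(\Gamma,\R)[G_\infty]},B\rangle$ are elementarily related, which gives the contradiction.
\end{itemize}
Without this comparison of two generic embeddings through a universally Baire predicate, you have no way to get from an embedding-level statement back to $SCC^{cof}$ in $N_\delta[G_0]$.
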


\begin{proof}
    We begin by fixing $G_\infty \subseteq Coll(\omega,\R)$ $L(\Gamma,\R)$-generic, $j_\infty$ be the derived embedding given by theorem \ref{coll embedding}. Fix also $G_0 \subseteq \pmax$ $L(\Gamma,\R)$-generic such that $G_0\in L(\Gamma,\R)[G_\infty]$.
    \\
    \\
    In the rest of the proof $\R$ will always refer to the "original" set of reals of the base model, the reals of any other model will always be noted differently (such as $\R^\infty$).
    \\
    \\
    By theorem \ref{wrp} $WRP(\omega_2)$ holds in $L(\Gamma,\R)[G_0]$ and thus also $WCC^+_{cof}$.\\
    Fix a surjection $\rho : \R \rightarrow H_{\omega_2}\cap L(\R)$ definable in $\langle L(\R)\cap H_{\omega_2},\in\rangle$.\\
    For a term $\tau \in L(\Gamma,\R)^{\pmax}$ for a function $F :\omega_2^{<\omega} \rightarrow \omega_2$, by weak-homogeneity we can suppose $1 \Vdash  \tau :\omega_2^{<\omega} \rightarrow \omega_2$.
    \\
    \\
    Using determinacy we turn $WCC^+_{cof}$ into a statement about reals in the ground model : 
    \\
    \\
    Consider the following game : players choose pairs of reals $(x_i,y_i)$ such that $\rho (x_i)\in\pmax$ and $\rho(x_{i+1})<\rho(x_i)$ for any $i<\omega$.
    Set $\sigma = \{x_i,y_i\ |\ i<\omega\}$, II wins if for each $p_0\in\pmax$ such that $p_0<\rho(x_i)$ for every i and all $\alpha\in\omega_2$, there is $p\in\pmax$, $p<p_0$ and $Z\in [\omega_2]^\omega$ such that :
    \begin{enumerate}
        \item $\rho[\sigma]\cap\omega_2\subsetneq Z$.
        \item $\rho[\sigma]\cap\omega_1 = Z\cap\omega_1$.
        \item sup $Z>\alpha$.
        \item $p \Vdash \tau[Z^{<\omega}]\subseteq Z$.
    \end{enumerate}
    By $\ADR$ the game is determined and $WCC^+_{cof}$ implies that I does not have a winning strategy :
    \\
    \\
    Otherwise suppose s is a winning strategy for I, we play as II and suppose I plays according to s. Since $WCC^+_{cof}$ holds in $L(\Gamma,\R)[G_0]$ there is $\dot{g}$ in $L(\Gamma,\R)^{\pmax}$ and $q\in\pmax$ such that $\forall X\in[\omega_2]^\omega$
    $q\Vdash \dot{g}[X^{<\omega}]\subseteq X \implies \forall \alpha\in\omega_2$ there is Z that witnesses $WCC^+_{cof}$ for X and $\alpha$.
    Fix $\langle t_i, i<\omega \rangle$ an enumeration of $\omega^{<\omega}$ such that for all i, $t_i\subseteq i$.
    At turn i choose $x_{i+1}$ such that $\rho(x_{i+1})<\rho(x_i),q$ and such that $\rho(x_{i+1})$ fixes the value of $\dot{g}$ on $(\{\rho(y_j)\ |\ j<i\}\cap\omega_2)^{<\omega}$ (we can assume that each $\rho(x_{i})$ is in $\omega_2$, otherwise we just delay the enumeration). Then choose $y_{i+1}$ such that $\rho(x_{i+1})\Vdash\rho(y_{i+1})=\dot{g}(\langle \rho(y_{t_{i}(n)}),n\in dom(t_i) \rangle)$.
    Now for any $p_0$ such that $\forall i<\omega,  p_0<\rho(x_i)$ we have $p_0\Vdash \dot{g}[\rho[\sigma]\cap\omega_2]\subseteq \rho[\sigma]\cap\omega_2$.
    \\
    \item By the choice of $\dot{g}$ we then have that for all $\alpha\in\omega_2$ there is $p<p_0$ and Z that satisfies (1)-(4), which should not be possible since we played against I using his winning strategy s.
    \\
    \\
    The game being determined II must now have a winning strategy, let $h\in L(\Gamma,\R)$ be the function $\R^{<\omega}\rightarrow\R$ defined from this strategy.
    \\
    \\
    $h\in L(\Gamma,\R)$ is then such that for all $\sigma\in P_{\omega_1}(\R)$, if $h[\sigma^{<\omega}]\subseteq \sigma$, then for comeager many filter $g\subseteq \pmax\cap\rho[\sigma]$, for all $p_0<g$ and any $\alpha\in\omega_2$ there is $p<p_0$ and Z that satisfies (1)-(4).
    \\
    \\
    To see this fix $\langle \sigma_i : i<\omega\rangle$ an enumeration of $\sigma$. We play as I against II with his winning strategy s used to define h, assume that at turn i we always choose $y_{i}$ to be $\sigma_i$. Since $\sigma$ is closed by h, as long as we choose a correct $x_{i+1}$ (meaning $\rho(x_{i+1})<\rho(x_i)$) the answer of II will be in $\sigma$. We can consider this as an auxiliary game where both players play decreasing $\pmax$ conditions. Let s' be the wining strategy for II obtained from s.
    \\
    \\
    Suppose $t\in \sigma^{<\omega}$ is a partial play of even length. Set $U_t=\{G \subseteq \pmax\cap\rho[\sigma]\ | \ t\in G^{<\omega} \implies \exists p\in G\ s'(t^\frown p)\in G \}$. Each $U_t$ is open dense hence U=$\bigcap_t U_t$ is a comeager set of filter and any $G\in U$ is as required.
    \\
    \\
    For any name $\tau \in L(\Gamma,\R)^{\pmax}$ for a function $\omega_2^{<\omega} \rightarrow \omega_2$ set $A_\tau =\{(x,a,b)\ |\ \rho(x) \Vdash \tau (a)=b\}$, this way such names can be coded by sets of reals. Define $\Sigma$ as the set of all sets of reals coding such a name.
    \\
    \\
    Fix a surjection $\pi : \Theta^\omega \rightarrow \Gamma$ that is $\Sigma_1$ definable in $L(\Gamma,\R)$ given by theorem \ref{coding}.
    \\
    \\
    For A in $\Sigma$ coding $\tau$
    let $s \in \delta^\omega$ be such that for $B=\pi(s)$, A and $\R-A$ have scales projective in B. 
    \\
    \\
    Notice that the statment "$\sigma \in P_{\omega_1}(\R)$ is such that for comeager many filter $g\subseteq \pmax\cap\rho[\sigma]$, for all $p_0<g$ and any $\alpha\in\omega_2$ there is $p<p_0$ and Z that satisfies (1)-(4)." is first order over $\langle L(\R)\cap H_{\omega_2},\in,A\rangle$, hence the statement "there is h such that for all $\sigma\in P_{\omega_1}(\R)$, if $h[\sigma^{<\omega}]\subseteq \sigma$, then for comeager many filter $g\subseteq \pmax\cap\rho[\sigma]$, for all $p_0<g$ and any $\alpha\in\omega_2$ there is $p<p_0$ and Z that satisfies (1)-(4)." is $\Sigma^2_1(A)$.
    \\
    \\
    By the Solovay basis theorem (theorem \ref{SBT}), there is a $\Delta^2_1(A)$ witness h and such an h is thus ordinal definable from s. Hence for any countable X and $s\in X$, $\R \cap HOD^{L(\Gamma,\R)}(X)$ is closed for such an h.
    \\
    \\
    let $\sigma = \R \cap HOD^{L(\Gamma,\R)}(X)$, using theorem \ref{comeager} we have :
    \\
    \\
    For all $g\subseteq HOD^{L(\Gamma,\R)}(X)\cap \pmax$ that is $HOD^{L(\Gamma,\R)}(X)$-generic and any $p_0\in\pmax$ with $p_0<g$ and $\alpha <\omega_2$, there is $p<p_0$ and $Z\in[\omega_2]^{\omega}$ such that :
    \begin{enumerate}
        \item $\rho[\sigma]\cap\omega_2 \subsetneq Z$.
        \item $\rho[\sigma]\cap\omega_1 = Z\cap\omega_1$.
        \item sup $Z>\alpha$.
        \item $p\Vdash \tau[Z^{<\omega}]\subseteq Z$.
    \end{enumerate}
    ~\\Fix $\delta$ and recall that $\Gamma_\delta=\{ A\subseteq \R\ |\ w(A)<\delta \}$.
    \\
    \\
    Remark : The following differs from the proof given in [6] as we realized that the previous solution does not work, this is a correction given by Woodin.
    \\
    \\
    Pick $t\in \Theta^\omega$ such that $\pi(t)$ is Suslin co-Suslin and wadge above every set of $\Gamma_\delta$
    define $X_\infty=\{j_\infty(s)\ |\ s\in \delta^\omega\ is\ bounded\ in\ \delta\}\cup \{j_\infty(t)\}$ (note that $\R\subseteq X_\infty$).
    \\
    \\
    Using the $\omega_2$ wellordering of $\R$ $\langle x_i,\ i\in \omega_2 \rangle$ given by $\pmax$, define a function $F: \omega_2^{<\omega} \rightarrow\omega_2$ such that if the wadge reduct of $\pi(t)$ by the function coded by $x_\alpha$ is $A_f\in\Sigma$, then for all $\bar{u},\ F(\alpha^\frown\bar{u})=f(\bar{u})$. We can pick $\tau$ a name for such an F ordinal definable from t.
    \\
    \\
    Applying the previous result in $L(\Gamma^\infty,\R^\infty)$ for $j_\infty(t)$ and $X_\infty$ using the embedding $j_\infty$ we have :
    \\
    \\
    For all generic $g\subseteq HOD^{L(\Gamma^\infty,\R^\infty)}(X_\infty)\cap \pmax$ and any $p_0\in j_\infty(\pmax)$ with $p_0<g$ and any $\alpha<j_\infty(\omega_2)$, there is $p<p_0$ and $Z\in j_\infty([\omega_2]^{\omega})$ such that :
    
    \begin{enumerate}
        \item $j_\infty (\rho)[\sigma]\cap \omega_2 \subsetneq Z$.
        \item $j_\infty (\rho)[\sigma]\cap j_\infty(\omega_1^{L(\Gamma,\R)})=Z\cap j_\infty(\omega_1^{L(\Gamma,\R)})$.
        \item sup $Z > \alpha$.
        \item $p\Vdash j_\infty (\tau)[Z^{<\omega}]\subseteq Z$.
    \end{enumerate}
    ~\\
    First notice that by the homogeneity of $Coll(\omega,\R)$, $\R^{L(\Gamma^\infty,\R^\infty)}\cap{HOD^{L(\Gamma^\infty,\R^\infty)}}(X_\infty)=\R$.
    From this we have $j_\infty (\rho)[\sigma]\cap\omega_2=j_\infty (\rho)[\R]\cap\omega_2=j_\infty (\rho[\R])\cap\omega_2=j_\infty[\omega_2^{L(\Gamma,\R)}]$ and $j_\infty (\rho)[\sigma]\cap j_\infty(\omega_1^{L(\Gamma,\R)})=j_\infty[\omega_2^{L(\Gamma,\R)}]\cap j_\infty(\omega_1^{L(\Gamma,\R)})=j_\infty[\omega_1^{L(\Gamma,\R)}]=\omega_1^{L(\Gamma,\R)}$. 
    \\
    \\
    Also $\pmax^{HOD^{L(\Gamma^\infty,\R^\infty)}(X_\infty)}=\pmax^{L(\Gamma,\R)}$ and $G_0\subseteq HOD^{L(\Gamma^\infty,\R^\infty)}(X_\infty)\cap \pmax^{HOD^{L(\Gamma^\infty,\R^\infty)}(X_\infty)} $.
    Again by homogeneity of $Coll(\omega,\R)$, $G_0$ is $HOD^{L(\Gamma^\infty,\R^\infty)}(X_\infty)$-generic.
    \\
    \\
    Finally, by the construction of $\tau$, if $p\Vdash f[Z^{<\omega}]\subseteq Z$ then for any $\alpha\in Z$ if p forces that the wadge reduct coded by $x_\alpha$ is $A_f\in\Sigma$, then $p\Vdash f[Z^{<\omega}]\subseteq Z$. Since $G_0$ is $L(\Gamma,\R)$-generic, for any $x\in \R$ there is $v\in G_0$ and $\alpha \in \omega_2$ such that $v \Vdash x_\alpha=x$.
    Thus if $f \in \Gamma_\delta$, then in $L(\Gamma^\infty,\R^\infty)$ when $q<G_0$, there is $\alpha\in \omega_2^{L(\Gamma,\R)}$ such that $q\Vdash "j_\infty(f)$ is coded by the wadge reduct of $j_\infty(\pi(t))$ by $x_{j_\infty(\alpha)}"$.
    \\
    \\
    Combining everything we have :
    \\
    \\
    (X) For any $p_0\in j_\infty(\pmax)$ with $p_0<G_0$ and any $\alpha<j_\infty(\omega_2)$, there is $p<p_0$ and $Z\in j_\infty([\omega_2]^{\omega})$ such that
    \begin{enumerate}
        \item $j_\infty [\omega_2^{L(\Gamma,\R)}] \subsetneq Z$.
        \item $\omega_1^{L(\Gamma,\R)}=Z\cap j_\infty(\omega_1^{L(\Gamma,\R)})$.
        \item sup $Z > \alpha$.
        \item for any $f$ such that $A_f \in \Gamma_\delta$, $p\Vdash j_\infty (f)[Z^{<\omega}]\subseteq Z$.
    \end{enumerate}
    ~\\
    Next we look at what happens when $SCC^{cof}$ fails : 
    \\
    \\
    Set $M_0$ defined in $N_\delta[G_0]$ :
    $M_0=\{f : \omega_2^{<\omega}\rightarrow\omega_2\ |\ f\in N_\delta[G_0]\} \cup H_{\omega_2}^{N_\delta[G_0]}$.
    \\
    \\
    Set T the set of $X\in [M_0]^\omega$ such that for all $\alpha <\omega_2$ there is $Y\in [M_0]^\omega$ with :
    \begin{enumerate}
        \item $X\cap \omega_2 \subsetneq Y$.
        \item $X\cap\omega_1 = Y\cap \omega_1$.
        \item sup $Y > \alpha$.
        \item for each $f\in X, f : \omega_2^{<\omega} \rightarrow \omega_2$, Y is closed by $f$.
    \end{enumerate}
    ~\\
    By lemma \ref{scc} if T is a club then $SCC^{cof}$ holds, suppose then that T is not a club so that $S=[M_0]^{\omega}-$T is stationary.
    \\
    \\
    Let $H_0\subseteq Coll(\omega_3,P(\omega_2))^{N_\delta[G_0]}$ be $N_\delta[G_0]$-generic such that $H_0\in L(\Gamma,\R)[G_\infty]$. Remark then that $N_\delta[G_0][H_0]\models ZFC$.
    \\
    \\
    Since $\kappa=\Theta_{\delta+1}$ is Woodin in $HOD^{L(\Gamma,\R)}$ it follows that $\kappa$ is Woodin in $N_\delta[G_0][H_0]$. We can then apply the stationary tower forcing $\mathbb{Q}_{<\kappa}$:
    Choose $G_{S}\subseteq (\mathbb{Q}_{<\kappa})^{N_\delta[G_0][H_0]}$ $N_\delta[G_0][H_0]$-generic with $S\in G_S$ such that $G_S\in L(\Gamma,\R)[G_\infty]$.
    \\
    \\
    Choose also $g_0\subseteq Coll(\omega,P(\delta)\cap N_\delta[G_0][H_0])$ an $N_\delta[G_0][H_0]$-generic filter such that $g_0 \in N_\delta[G_0][H_0][G_S]$. 
    \\
    \\
    For $j_S : N_\delta[G_0][H_0] \rightarrow N^S\subseteq N_\delta[G_0][H_0][G_S]$ the stationary tower embedding defined from $G_S$, since $M_0=\cup S$ and $S\in G_S$ we have $j_S[M_0]\in j_S(S)$, it follows that in $j_S[N_\delta[G_0]]$ :
    \\
    \\
    (Y) There is $p_0\in j_S(G_0)$ such that $p_0<G_0$ and such that there is an $\alpha<j_S (\omega_2)$ such that for all $Z\in j_S(P_{\omega_1}(\omega_2))$, if :
    \begin{enumerate}
        \item $j_S[\omega_2^{L(\Gamma,\R)}]\subsetneq Z$.
        \item $\omega_1^{L(\Gamma,\R)}= Z\cap j_S(\omega_1)^{L(\Gamma,\R)}$.
        \item sup $Z> \alpha$.
    \end{enumerate}
    then there is $p\in j_S(G_0)$ and $\tau$ with $A_\tau \in \Gamma_\delta$ such that
    \begin{enumerate}
        \item $p \Vdash j_S(\tau)[Z^{<\omega}]\not \subseteq Z$.
        \item $p<p_0$.
    \end{enumerate}
    ~\\
    To derive a contradiction we show how the statements (X) and (Y) can be coded by sets of reals (in $N_S$ and $L(\Gamma,\R)[G_\infty]$ respectively) using the same formula.
    \\
    \\
    For any $A\in \Gamma_\delta$ there is a weakly homogeneous tree $T_A\in N_\delta$ such that $A=p[T_A]$, pose $T^\infty_A=j_\infty(T_A)=j_\mu(T_A)\in N_\delta$ and $A^\infty=p[T^\infty_A]$ computed in $L(\Gamma,\R)[G_\infty]$.
    \\
    \\
    We have $A^\infty=j_\infty(A)$ and $j_S(A)=j_\infty(A)\cap N_S$:
    We show that $j_S(A)=p[T^\infty_A]^{N_{S}}$.
    \\
    \\
    First observe that in $N_\delta$ $A=p[T_A]=p[T_A^\infty]$: 
    \\
    
        Since $T_A\subseteq T_A^\infty$ we have $p[T_A]\subseteq p[T_A^\infty]$.
    \\
    
    Suppose $x \in p[T_A^\infty]$ and $x \in \R^{N_\delta}$, then $x=j_\infty(x)\in p[T_A^\infty]=j_\infty(A)$ and thus $x\in A = p[T_A]$.
    \\
    \\
    Hence in $N_\delta$, $A=p[T^\infty_A]$, by theorem \ref{tower} we have $j_S(A)=p[T^\infty_A]^{N_S}$.
    \\
    \\
    Set $g_0 \subseteq Coll(\omega,P(\delta)\cap N_\delta[G_0][H_0])$ be an $N_\delta[G_0][H_0])$-generic filter in $N_\delta[G_0][H_0][G_S]$. Note that $g_0\in N_S$.
    \\
    \\
    Using the enumeration $\langle A_i,i<\omega\rangle $ of $\Gamma_\delta$ given by $g_0$  define B as the set of $x\in \R^\infty$ coding $\langle x_i,i < \omega\rangle$ such that $x_i\in p[T^\infty_{A_i}]$, merging the trees $\langle T^\infty_{A_i},i<\omega\rangle $ we obtain a tree $T^\infty$ in $N_\delta[G_0][H_0][G_S]$ such that $B=p[T^\infty]$.
    \\
    \\
    Since every tree $T^\infty_A$ is $<\Theta$-weakly homogeneous, the tree $T^\infty$ is $<\Theta$-weakly homogeneous in $N_\delta[G_0][H_0][g_0]$.
    \\
    \\
    Claim : $\langle H_{\omega_1}^{N_S},B\cap N^S,\in \rangle \prec \langle H_{\omega_1}^{L(\Gamma,\R)[G_\infty]},B,\in \rangle$
    (see [6])
    \\
    \\
    Using a projective prewellordering A of $\R$ (thus $A\in \Gamma_\delta$) to represent $\omega_2$ (meaning $\omega_2=\{r_A(x)\ |\ x\in \R \}$, where $r_A$ is the rank function of A), $j_S[\omega_2^{L(\Gamma,\R)}]$ and $j_\infty[\omega_2^{L(\Gamma,\R)}]$ are both represented by $\R$ in $N_S$ and $L(\Gamma,\R)[G_\infty]$ respectively. ($j_S[\omega_2^{L(\Gamma,\R)}]=\{ r_{j_S(A)}(x)\ |\ x\in \R \}$ and $j_\infty[\omega_2^{L(\Gamma,\R)}]=\{ r_{j_\infty(A)}(x)\ |\ x\in \R \}$).
    \\
    \\
    Since $G_0$ and $\R$ are countable in $N_S$ and $L(\Gamma,\R)[G_\infty]$, (X) is expressible in $\langle H_{\omega_1}^{N_S},B\cap N^S,\in \rangle$ with a formula with parameters $G_0 ,\R$ (actually $G_0$ is enough since $\R$ is the set of reals occurring in conditions of $G_0$)  and (Y) is expressible by the negation of the same formula in $ \langle H_{\omega_1}^{L(\Gamma,\R)[G_\infty]},B,\in \rangle$ we have a contradiction.
    \\
    \\
    Consequently S must be nonstationary and $SCC^{cof}$ holds in $N_\delta[G_0]$.
       
\end{proof}

The main interest in this stronger version of the conjecture is the following theorem from Shelah. 

\begin{theorem}(Shelah)
    $SCC^{cof}$ is equivalent to Namba forcing being semi-proper.
\end{theorem}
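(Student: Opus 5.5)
We need to prove Shelah's theorem: $SCC^{cof}$ is equivalent to Namba forcing being semiproper. We work in ZFC, with Namba forcing $\mathbb{N}$ taken as the set of trees $T\subseteq\omega_2^{<\omega}$ in which every node above the stem has $\omega_2$ many immediate successors. Semiproperness means that for club many countable $M\prec H_\theta$ and every $T\in M\cap\mathbb{N}$, there is an $(M,\mathbb{N})$-semigeneric $T'\le T$. Such a $T'$ forces that for every name $\dot\beta\in M$ for a countable ordinal, $\dot\beta\in M$; equivalently, $T'\Vdash M[\dot G]\cap\omega_1=M\cap\omega_1$.

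\textbf{From $SCC^{cof}$ to semiproperness.} By the equivalence theorem above and its cofinal version, fix $\theta$ large and a well ordering $\lhd$. Then every countable $M\prec\langle H_\theta,\in,\lhd\rangle$ has cofinally many $\alpha<\omega_2$ for which there is a countable $N\supseteq M$ with $N\cap\omega_1=M\cap\omega_1$, $N\cap\omega_2\neq M\cap\omega_2$ and $\sup N\cap\omega_2>\alpha$. Given $M$ and $T\in M$, build a subtree $T'\le T$ together with countable models $M_s\prec H_\theta$ indexed by the nodes $s\in T'$ extending the stem. Set $M_{\mathrm{stem}}=M$, require $M_s\subseteq M_t$ for $s\subseteq t$, require $M_s\cap\omega_1=M\cap\omega_1$, and require the tail of $T$ at $s$ to lie in $M_s$.

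At node $s$ we must find $\omega_2$ many successors $s^\frown\xi$, each attached to a model $M_{s^\frown\xi}\supseteq M_s$ that decides more of an enumeration of the names $\dot\beta_n\in M_s$. We do this in two steps.
\begin{enumerate}
\item Decide inside $M_s$ via a fusion/rank argument. Either some successor tree decides $\dot\beta_n$ below $M\cap\omega_1$ using an ordinal from $M_s$, or we use $SCC^{cof}$ to move to an end extension $N$ containing a new successor ordinal $\xi$ above any given $\alpha$.
\item The cofinality clause is exactly what supplies $\omega_2$ many (unboundedly many) successor ordinals $\xi\in N\setminus M_s$ with $\xi$ a successor of $s$ in $T$. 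Since the set of successors of $s$ in $T$ lies in $M_s$ and is unbounded, elementarity of $N$ gives such $\xi\in N$ beyond $\alpha$.
\end{enumerate}
At stage $n$ the new model is $N$ (or $N[\{\xi\}]$ by Lemma \ref{esm}, which keeps $\omega_1$ fixed since $\xi\in N$). We then strengthen inside $N$ to decide $\dot\beta_n$; because $N\cap\omega_1=M\cap\omega_1$, the decided value lies below $M\cap\omega_1$. The resulting $T'$ is a Namba tree, and along every branch the union of the $M_s$ witnesses semigenericity.

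\textbf{From semiproperness to $SCC^{cof}$.} Let $M$ be in the club given by semiproperness and fix $\alpha<\omega_2$. Take the condition $T_\alpha\in M$ whose successors are all above $\alpha$ (e.g. $\omega_2^{<\omega}$ restricted to ordinals above $\alpha$, with $\alpha$ absorbed by enlarging $M$ to $M(\alpha)$ when $M(\alpha)\cap\omega_1=M\cap\omega_1$). A more robust choice is to use the generic cofinal sequence itself. Let $T'\le T_\alpha$ be semigeneric and let $G$ be generic containing $T'$. In $V[G]$, $M[G]\cap\omega_1=M\cap\omega_1$, and $M[G]$ contains $\dot g(0)>\alpha$, which is not in $M$ because it is a new value of the generic. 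Now $N=M[G]\cap H_\theta^V$ satisfies the end-extension conditions in $V[G]$, and we pull it back to $V$ via absoluteness: the existence of a countable $N\prec H_{\omega_3}$ with these properties is witnessed in $V[G]$, and $\omega_1$ is preserved. We can also avoid absoluteness issues by working with the tree of attempts to build $N$, which is ill-founded in $V$ iff in $V[G]$.

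\textbf{Main obstacle.} The hard step is the fusion construction in the forward direction. Each model extension must keep $\omega_1$ fixed while still producing $\omega_2$ many splitting points at every node. This is the point where the cofinal version, rather than plain SCC, is essential: SCC alone gives only a single new ordinal, possibly bounded, so it cannot produce a splitting set of size $\omega_2$.
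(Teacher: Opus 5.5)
The paper gives no proof of this statement (it is quoted from Shelah), so your proposal has to stand on its own. Its skeleton is the standard one, but the forward direction leaves out the only Namba-specific fact it needs. In Namba forcing, a strengthening that decides $\dot\beta_n$ will in general lengthen the stem, from $s^\frown\xi$ to some $t_\xi$. Under your definition (every node above the stem splits), the nodes $u$ with $s^\frown\xi\subseteq u\subsetneq t_\xi$ would then have a single successor, so $T'$ would not be a condition. Your ``either/or'' in step 1 does not address this.

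What you need is the pure decision lemma: for every condition $R$ and every name $\dot\beta$ for a countable ordinal, there is $Q\le R$ \emph{with the same stem} deciding $\dot\beta$. Sketch of proof:
\begin{itemize}
\item Call a node $t$ good if $R_t$ has such a $Q$.
\item Suppose unboundedly many successors of $t$ are good. A union of $\omega_1$ bounded subsets of $\omega_2$ is bounded, so unboundedly many of them decide the same value. Amalgamating their witnesses shows that $t$ is good.
\item Hence a non-good node has unboundedly many non-good successors. So if the stem were not good, the non-good nodes would form a condition. Any extension of it deciding $\dot\beta$ would have a stem that is non-good yet good, a contradiction.
\end{itemize}
Apply this by elementarity inside $N=M_{s^\frown\xi}$. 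It gives a pure extension in $N$ deciding $\dot\beta_n$, with value in $N\cap\omega_1=M\cap\omega_1$, which is exactly what your construction needs.

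Semigenericity then follows because the subtrees $T'_u$, for $u$ at level $n+1$ above the stem, are predense below $T'$, and each forces $\dot\beta_n<M\cap\omega_1$. Ground-model branches of $T'$ say nothing about the generic, so the argument ``along every branch'' should be replaced by this predensity argument. Note also that the step you call the main obstacle is immediate. The successor set of $s$ in the current condition lies in $M_s$ and is unbounded, so any end extension of $M_s$ containing an ordinal above $\alpha$ already contains a successor of $s$ above $\alpha$.

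Your converse works in its second form. The first form is circular, since $T_\alpha\in M$ requires $\alpha\in M$. You also need no absoluteness argument. Each value $\xi=g(n)$ of the generic sequence lies in $M[G]$, so the ground-model hull $M(\xi)=\{f(\xi): f\in M\}$ is contained in $M[G]$ and satisfies $M(\xi)\cap\omega_1=M\cap\omega_1$. Hence the set $\{\xi<\omega_2 : M(\xi)\cap\omega_1=M\cap\omega_1\}$, which lies in $V$, contains the cofinal set $\mathrm{ran}(g)$ and is therefore unbounded in $\omega_2$.
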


Here we apply the result of the theorem to get a model of "ZFC + Namba forcing is semiproper + $\Theta^{UB}=\omega_3$". Using an appropriate $\delta$ in the $\Theta$-sequence of $L(\Gamma,\R)$ we will have a model. 

\begin{theorem}
    Suppose $\Gamma$ is a pointclass closed under continuous pre-images such that
    \\
    \\
    \centerline{$L(\Gamma,\R)\models \ADR + \Theta\ is\ regular$}
    \\
    \item Let $\langle \Theta_\alpha : \alpha < \Omega \rangle$ be the $\Theta-sequence$ of $L(\Gamma,\R)$, pose for each $\delta<\Omega$,
    
    \centerline{$\Gamma_\delta = \{A\subseteq \R\ |\ w(A)< \Theta_\delta\}$}
    
    \item set
    
    \centerline{$N_{\delta}=HOD(\Gamma_\delta)$}
    
    \item if $\delta<\Omega$ is such that 
    \begin{enumerate}
        \item $\delta=\Theta_\delta$.
        \item $N_\delta\models \delta\ is\ regular$.
    \end{enumerate}
    then for $G_0 \subseteq \pmax$ $L(\Gamma,\R)$-generic and $H_0$ is $L(\Gamma,\R)[G_0]$-generic for $coll(\omega_3,P(\omega_2))^{N_\delta[G_0]}$, then $V_{\theta_{\delta +1}}^{N_\delta[G_0][H_0]}\models ZFC+SCC^{cof}+ \Theta^{UB}=\omega_3$.
\end{theorem}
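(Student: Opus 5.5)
We combine three ingredients: Theorem \ref{scc pmax}, the preservation of $SCC^{cof}$ under $\omega_2$-closed forcing, and Theorem \ref{ub}. Write $\kappa=\theta_{\delta+1}$ and $W=V_{\kappa}^{N_\delta[G_0][H_0]}$. The verification splits into three parts: that $W\models ZFC$, that $W\models SCC^{cof}$, and that $\Theta^{UB}=\omega_3$ holds in $W$.

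\textbf{$W\models ZFC$.} By Theorem \ref{scc pmax}, $N_\delta[G_0]\models ZF+\omega_2\text{-}\DC+SCC^{cof}$. Forcing with $Coll(\omega_3,P(\omega_2))^{N_\delta[G_0]}$ well-orders $P(\omega_2)$. Since $|\R|=\omega_2$ after $\pmax$, this also well-orders $P(\R)$, and so (as remarked before the corollary on $\omega_2$-closed forcings) $N_\delta[G_0][H_0]\models ZFC$. As in the proof of Theorem \ref{scc pmax}, $\kappa$ is Woodin in $HOD^{L(\Gamma,\R)}$ and hence in $N_\delta[G_0][H_0]$. In particular $\kappa$ is inaccessible there, so $W\models ZFC$.

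\textbf{$W\models SCC^{cof}$.} The forcing is $\omega_2$-closed. By the corollary following the inner-model theorem (in its cofinal version, which the paper notes holds equally), $SCC^{cof}$ passes from $N_\delta[G_0]$ to $N_\delta[G_0][H_0]$. The collapse adds no new subsets of $\omega_2$ beyond the generic well-order, and $\omega_2$, $\omega_3$ are preserved. More precisely, $\omega_3^{N_\delta[G_0][H_0]}=\delta$: we have $\Theta^{N_\delta}=\delta$, $\pmax$ makes $\Theta$ equal to $\omega_3$ when $\Theta$ is regular, and the collapse is $\omega_3$-closed-ish, so it preserves $\omega_3$. Since $SCC^{cof}$ is a statement about $H_{\omega_3}\in W$, it is absolute to $W$.

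\textbf{$\Theta^{UB}=\omega_3$ in $W$.} Apply Theorem \ref{ub} with $\beta=\delta+1$ and any $\alpha\le\delta$. Here $\theta_{\delta+1}$ is regular (indeed Woodin) in $HOD$, so $V_\kappa^{HOD(\Gamma_\delta)}$ satisfies that all sets of reals are universally Baire. Lifting this through the two forcings, both of size less than $\kappa$, requires checking the following for every $A\in\Gamma_\delta$:
\begin{enumerate}
\item $A$ has a $<\kappa$-homogeneous (hence universally Baire) tree representation in $N_\delta$;
\item these trees still project to complementary sets after small forcing, so $A$ remains universally Baire in $W$;
\item the sets of reals of $W$ that are universally Baire are exactly the interpretations of sets in $\Gamma_\delta$, up to reals added by the forcing, which are none except via $\pmax$, which adds no reals.
\end{enumerate}
This third point is the \emph{main obstacle}: a new set of reals in $W$ that is universally Baire must come from $\Gamma_\delta$. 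I would argue it via the stationary tower $\mathbb Q_{<\kappa}$ in $W$ and Theorem \ref{tower}, together with the fact that universally Baire sets in the presence of a Woodin cardinal are determined and lie in a model of $\AD^+$ whose Wadge hierarchy is bounded by $\delta$. Granting this, the Wadge ranks of universally Baire sets in $W$ are cofinal in $\delta$, so $\Theta^{UB}=\delta=\omega_3^W$.
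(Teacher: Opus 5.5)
Your treatment of $W\models ZFC$ and of $SCC^{cof}$ follows the paper: Theorem \ref{scc pmax}, preservation under the $\omega_2$-closed collapse, and absoluteness of a statement about $H_{\omega_3}$ to $V_\kappa$. Your points 1 and 2 give exactly the paper's lower bound,
\[
\Theta^{UB}\ \ge\ (\Theta^{UB})^{V_\kappa^{N_\delta}}\ =\ \Theta^{N_\delta}\ =\ \delta\ =\ \omega_3^W .
\]
The gap is the upper bound. You get $\Theta^{UB}\le\delta$ only from point 3, and you explicitly leave that as ``granting this''. Point 3 is a sealing-type classification of all universally Baire sets of $W$, and the sketch you give for it does not go through as stated. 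First, $\mathbb{Q}_{<\kappa}$ is not a set forcing of $W=V_\kappa^{N_\delta[G_0][H_0]}$: $\kappa$ is the height of $W$, so the tower is a proper class there. Second, the appeal to ``universally Baire sets in the presence of a Woodin cardinal are determined'' needs a Woodin cardinal inside $W$, and nothing in the hypotheses supplies one. As written, the proof of $\Theta^{UB}=\omega_3$ is therefore incomplete.

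The missing observation is that the upper bound costs nothing. $W\models ZFC$, and $W\models|\R|=\omega_2$: the $\pmax$ well-order of $\R$ of length $\omega_2$ lies in $N_\delta[G_0]$, and neither forcing adds reals or collapses $\omega_2$. Hence every prewellordering of $\R$ in $W$ has length $<(2^{\aleph_0})^+=\omega_3$, so $\Theta^{UB}\le\Theta^W\le\omega_3$. Replace point 3 by this one line. Together with your lower bound this gives $\omega_3\ge\Theta^{UB}\ge\delta=\omega_3$, which is precisely the paper's argument. No identification of the universally Baire sets of $W$ with $\Gamma_\delta$ is needed.
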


\begin{proof}
    By theorem \ref{scc pmax} we have $V_{\theta_{\delta+1}}^{N_\delta[G_0][H_0]}\models ZFC+SCC^{cof}$ and by theorem \ref{ub} all sets of reals in $N_\delta$ are universally baire in $V_{\theta_{\delta+1}}^{N_\delta}$. Since $\pmax$ forces $|\R|=\omega_2$ we have $V_{\theta_{\delta+1}}^{N_\delta[G_0][H_0]}\models \omega_3\geq \Theta^{UB}\geq (\Theta^{UB})^{V_{\theta_{\delta+1}}^{N_\delta}}=\Theta^{V_{\theta_{\delta+1}}^{N_\delta}}=\omega_3$.
\end{proof}

\begin{corollary}
    The theory "ZFC + Namba forcing is semiproper + $\Theta^{UB}=\omega_3$" is consistent.
\end{corollary}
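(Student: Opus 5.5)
The corollary follows by combining the preceding theorem with Shelah's equivalence. I will work under the hypothesis that there is a pointclass $\Gamma$, closed under continuous preimages, with $L(\Gamma,\R)\models \ADR + \Theta$ regular, together with some $\delta<\Omega$ satisfying $\delta=\Theta_\delta$ and $N_\delta\models$ "$\delta$ is regular". As remarked after Woodin's theorem, such a $\delta$ exists for instance under $\ADR + \Theta$ is Mahlo. So the consistency claim is relative to that determinacy hypothesis.

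Fix $G_0\subseteq\pmax$ generic over $L(\Gamma,\R)$ and $H_0$ generic for $coll(\omega_3,P(\omega_2))^{N_\delta[G_0]}$. By the previous theorem, $W=V_{\theta_{\delta+1}}^{N_\delta[G_0][H_0]}$ satisfies $ZFC+SCC^{cof}+\Theta^{UB}=\omega_3$.

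The next step is to apply Shelah's theorem inside $W$. Since $W\models ZFC$, the equivalence between $SCC^{cof}$ and semiproperness of Namba forcing holds there, so $W\models$ "Namba forcing is semiproper". I need to check that this statement is correctly evaluated in the rank initial segment $W$. Both $SCC^{cof}$ (a statement about $H_{\omega_3}$) and semiproperness of Namba forcing (a statement about countable elementary submodels of some $H_\theta$ with $\theta$ small relative to $\omega_3$) are local. Since $\theta_{\delta+1}$ is a limit of cardinals above $\omega_3^W$, $W$ contains all the relevant $H_\theta$. Therefore the statements have the same meaning in $W$ as in $N_\delta[G_0][H_0]$. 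In any case we only need that $W$ is a model of ZFC in which Shelah's theorem, being a ZFC theorem, applies.

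The main point requiring care is therefore not a new argument but bookkeeping. First, $W$ must genuinely be a ZFC model; this needs $\theta_{\delta+1}$ to be, for example, inaccessible (indeed Woodin) in $N_\delta[G_0][H_0]$, which is used in the proof of Theorem~\ref{scc pmax}. Second, one must make the standard move from a forcing extension to a consistency statement. We work in the ground model $L(\Gamma,\R)$, and the existence of a model of the theory is forced. Alternatively, one takes countable elementary submodels, or works with Boolean-valued models, so that generic filters exist. Either way this yields Con("ZFC + Namba forcing is semiproper + $\Theta^{UB}=\omega_3$") relative to $\ADR$ + $\Theta$ regular + the existence of such a $\delta$.
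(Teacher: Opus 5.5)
Your proposal is correct and matches the paper's implicit argument: apply the preceding theorem to get $W=V_{\theta_{\delta+1}}^{N_\delta[G_0][H_0]}\models ZFC+SCC^{cof}+\Theta^{UB}=\omega_3$, then invoke Shelah's ZFC equivalence inside $W$. One harmless slip: semiproperness of Namba forcing is tested on submodels of $H_\theta$ for $\theta$ large enough that Namba forcing and its power set lie in $H_\theta$, not for $\theta$ small relative to $\omega_3$; $W$ contains these anyway because $\theta_{\delta+1}$ is inaccessible there, and as you note, only $W\models ZFC$ is actually needed.
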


The main theorem shows that $SCC^{cof}$ holds in a specific case of a $\pmax$ extension. The next theorem isolate a first order and more general characterization of what is needed to get the conjecture.

\begin{theorem}
    Suppose there exist two Woodin cardinals $\lambda>\delta$ and that there is a class $\Gamma\subseteq P(\R)$ such that $L(P(\R))=L(\Gamma,\R)[G]$ where $G\subseteq \pmax$ is generic, $L(\Gamma,\R)\models \ADR + \Theta$ is regular, $\Gamma=P(\R)\cap L(\Gamma,\R)$ and for some $\sigma>\lambda$ every set in $\Gamma$ is $\sigma-$weakly homogenous. Then $SCC^{cof}$ holds. 
\end{theorem}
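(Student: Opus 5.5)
We mirror the proof of Theorem \ref{scc pmax}, but work in $V$ (a ZFC model) and replace the two auxiliary embeddings by embeddings that exist under the stated large cardinal hypotheses. The two Woodin cardinals $\delta<\lambda$ play the roles previously played by $\delta$ (closure of the relevant pointclass) and $\kappa=\Theta_{\delta+1}$ (the stationary tower). The $\sigma$-weak homogeneity of every set in $\Gamma$, with $\sigma>\lambda$, replaces the trees $T_A\in N_\delta$ and the identity $j_\infty|L[S]=j_\mu|L[S]$.

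\textbf{Step 1: reduction to a club of good hulls.} Since $L(P(\R))=L(\Gamma,\R)[G]$ and $L(\Gamma,\R)\models\ADR+\Theta$ regular, Theorem \ref{wrp} gives $WRP(\omega_2)$ in $L(P(\R))$. Hence $WCC^+_{cof}$ holds there by the lemma above. Since $P(\omega_2)\subseteq L(P(\R))$ (every subset of $\omega_2$ is coded by a real in the $\pmax$ extension, as $|\R|=\omega_2$), every function $\omega_2^{<\omega}\to\omega_2$ of $V$ lies in $L(P(\R))$. So $WCC^+_{cof}$ holds in $V$ as well. It therefore suffices to verify the hypothesis of Lemma \ref{scc} in $V$. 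We argue by contradiction: suppose the set $S$ of $X\in[M_0]^\omega$ failing the conclusion of Lemma \ref{scc} is stationary.

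\textbf{Step 2: the statement (X).} We rerun the determinacy argument inside $L(\Gamma,\R)$ exactly as before. This uses the game with names $\tau$, the winning strategy $h$ for II, the Solovay basis theorem (Theorem \ref{SBT}) to make $h$ ordinal definable from a parameter, and Theorem \ref{comeager}. The conclusion is that generics $g$ over $HOD(X)$ admit witnesses $Z$ with $\sup Z>\alpha$ for all $\alpha$. Instead of $Coll(\omega,\R)$ and $j_\mu$, we force with the stationary tower $\Q_{<\delta}$ and take $j_\delta:V\to N_1$. By Theorem \ref{tower} and $\sigma$-weak homogeneity, $j_\delta(A)=A^{G}$ for every $A\in\Gamma$, so $j_\delta\restriction L(\Gamma,\R)$ is an embedding into the derived model $L(\Gamma^\infty,\R^\infty)$. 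This yields statement (X) for $j_\delta$ and $G$, with $G$ generic over the relevant $HOD$ by homogeneity. The function $F$ coding all names via the $\pmax$ wellorder of length $\omega_2$ is defined as before from a parameter $t$ chosen Wadge above the relevant sets.

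\textbf{Step 3: the statement (Y) and the comparison.} Next we force with $\Q_{<\lambda}$ below $S$ to get $j_S:V\to N^S$ with $j_S[M_0]\in j_S(S)$. This yields (Y): for some $p_0\in j_S(G)$ and some $\alpha$, every candidate $Z$ above $j_S[\omega_2]$ with the same $\omega_1$-trace and $\sup Z>\alpha$ is refuted by some name. As in the earlier proof, we code (X) and (Y) by the same first-order formula over $\langle H_{\omega_1},B,\in\rangle$. Here $B$ is the set coding $\langle x_i\rangle$ with $x_i\in A_i^*$, for an enumeration of the relevant sets in $\Gamma$ made countable by a collapse. Both sides compute $B$ through the $\sigma$-homogeneous trees, and $\lambda<\sigma$ guarantees that $j_S(A)$ and $j_\delta(A)$ are the canonical interpretations of the same trees. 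We then need $\langle H_{\omega_1}^{N^S},B\cap N^S\rangle\prec\langle H_{\omega_1}^{V[G_S]},B\rangle$, together with an absoluteness comparison between the $\delta$-tower and $\lambda$-tower interpretations; this can be arranged by forcing $\Q_{<\delta}$ over $V[G_S]$, or by factoring. Given this, (X) holds while (Y) asserts its negation, which is a contradiction.

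\textbf{Main obstacle.} The main obstacle is this last step. We must make the two embeddings compatible so that both (X) and (Y) are evaluated in one structure $\langle H_{\omega_1},B\rangle$. We also need the generic $G$ (countable after collapsing) to be generic over the relevant $HOD$ in the target model. I would first try to place everything inside a single $\Q_{<\lambda}$-extension, using that $\delta<\lambda$ is Woodin and that $\sigma$-homogeneity makes all $\Gamma$-sets universally Baire. Generic absoluteness for $L(\Gamma,\R)$ between the two derived models then substitutes for Theorem \ref{coll embedding}.
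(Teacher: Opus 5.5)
There is a genuine gap, in the step you treat most briefly: ``a parameter $t$ chosen Wadge above the relevant sets.'' In the proof of Theorem \ref{scc pmax} this works only because clause (4) of (X) has to cover names coded in $\Gamma_\delta$, which is a proper Wadge initial segment of $\Gamma$. So some $t$ with $\pi(t)$ Wadge above $\Gamma_\delta$ exists in $L(\Gamma,\R)$, and the single OD name built from it through the $\pmax$ well-order of $\R$ encodes every relevant $f$. Here, refuting the failure of $SCC^{cof}$ in $V$ requires handling every $f:\omega_2^{<\omega}\to\omega_2$ of $V$, i.e. names coded by arbitrary members of $\Gamma=P(\R)\cap L(\Gamma,\R)$. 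But $\Gamma$ has no Wadge upper bound inside $L(\Gamma,\R)$, and nothing in your Step 2 supplies one.

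The missing idea is how the paper uses the smaller Woodin cardinal. It is not a countable tower replacing $Coll(\omega,\R)$. Instead the paper forces with the $\omega_2$-tower $\Q^{\omega_2}_{<\delta}$ below $\{X\in[\omega_3]^{\omega_2} : X\cap\omega_3\in\omega_3\}$, giving $j_H:V\to N$ with critical point $\omega_3$. Because $|\R|=\omega_2$, $j_H$ fixes $\R$ and every set of reals. Hence $j_H: L(\Gamma,\R)[G]\to L(j_H(\Gamma),\R)[G]$ is elementary and $L(j_H(\Gamma),\R)\models\ADR+\Theta$ regular. Since $|\Gamma|\ge\omega_3$, $\Gamma$ is a proper Wadge initial segment of $j_H(\Gamma)$. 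Only then is $t$ chosen, inside $L(j_H(\Gamma),\R)$, and the original argument with $Coll(\omega,\R)$ and Theorem \ref{coll embedding} is run there verbatim.

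Replacing $Coll(\omega,\R)$ by $\Q_{<\delta}$ also discards exactly the features that argument uses:
\begin{itemize}
\item the identity $j_\infty | L[S]=j_\mu | L[S]$, which places the elements of $X_\infty$ in the ground model as OD objects;
\item the homogeneity of $Coll(\omega,\R)$, which gives $\R^\infty\cap HOD(X_\infty)=\R$ and the $HOD(X_\infty)$-genericity of $G$. Your ``by homogeneity'' has no evident analogue for the stationary tower.
\end{itemize}

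Finally, the paper handles the comparison you leave open as follows. It fixes $K\subseteq\Q_{<\lambda}$ first and chooses $H$ and the $Coll(\omega,\R)$-generic $G_\infty$ inside $V[K]$. Then (X) passes from $L(j_H(\Gamma)^\infty,\R^\infty)$ to $V[H][G_\infty]$, by countable closure and the fact that these models have the same reals. (Y) is read off in the ultrapower $M$, using $H_{\omega_1}^M=H_{\omega_1}^{V[K]}$. The merged tree defining $B$ is $\sigma$-weakly homogeneous with $\sigma>\lambda$, so $\langle H_{\omega_1}^{V[H][G_\infty]},B,\in\rangle\prec\langle H_{\omega_1}^{V[K]},B^*,\in\rangle$, which yields the contradiction. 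Your instinct to put everything inside one $\Q_{<\lambda}$-extension is right. But without the critical-point-$\omega_3$ embedding there is no uniform name $\tau$ covering all of $\Gamma$ to feed into (X) in the first place.
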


\begin{proof}
    Fix K $V-generic$ for $Q_{<\lambda}$ and H $V-$generic for $\Q_{<\delta}^{\omega_2}$ in V[K] with $\{X\in [\omega_3]^{\omega_2}\ |\ X\cap\omega_3\in\omega_3\}\in H$ and $j_H : V \rightarrow N \subseteq V[H]$ the associated embedding. Note that $L(P(\R))=L(\Gamma,\R)[G]$ implies that $|\R|=\omega_2$ and $|\Gamma|\geq \omega_3$.
    \\
    \\
    Hence we have $j_H(\R)=\R$ and $\Gamma \subsetneq j_H(\Gamma)$ since $crit(j_H)=\omega_3$, this also implies $j_H(A)=A$ for any $A\subseteq \R$. We then have $j_H : L(\Gamma,\R)[G] \rightarrow L(j_H(\Gamma),\R)[G]$ elementary. Further since $j_H(\R)=\R$, $\Gamma$ is wadge closed in $j_H(\Gamma)$ and $\Gamma = \{A\subseteq \R\ |\ w(A)<\Theta^{\Gamma}\}$ where $\Theta^{\Gamma}$ is the $\Theta$ of $L(\Gamma,\R)$.
    \\
    \\
   Fix $G_\infty \subseteq coll(\omega,\R)$ $V[H]$-generic in V[K] such that $G\in L(j_H(\Gamma),\R)[G_\infty]$ and $j_\infty : L( j_H(\Gamma),\R) \rightarrow L(j_H(\Gamma)^\infty,\R^\infty)$ the associated embedding.
   \\
    \\
    By elementarity of $j_H$ we get $N\models L(P(\R))=L(j_H(\Gamma),\R)[G]$, since names for reals in an extension by $coll(\omega,\R)$ can be coded by sets of reals and $G\in L(j_H(\Gamma),\R)[G_\infty] $ we get $\R^{N[G_\infty]}=\R^{L(j_H(\Gamma),\R)[G_\infty]}$. Again since $N^{\omega_2}\subseteq N$ in V[H] and $|\R|=\omega_2$ we get $P(\R)^N=P(\R)^{V[H]}$ and then $\R^{N[G_\infty]}=\R^{V[H][G_\infty]}$.
    \\
    \\
   Applying the first part of the proof of the main theorem in $L(j_H(\Gamma),\R)$ we easily get : 
   \\
    
    For sequence s where $\pi(s)$ codes $\tau$ ($\pi : \Theta^\omega \rightarrow \Gamma$ is a surjection $\Sigma_1$ definable in $L(\Gamma,\R)$ given by theorem \ref{coding}.) and any countable X such that $s\in X$ and $\sigma = \R \cap HOD^{L(j_H(\Gamma),\R)}(X)$:
    \\
   \\
    For all $g\subseteq HOD^{L(j_H(\Gamma),\R)}(X)\cap \pmax$ that is $HOD^{L(j_H(\Gamma),\R)}(X)$-generic and any $p_0\in\pmax$ with $p_0<g$ and $\alpha <\omega_2$, there is $p<p_0$ and $Z\in[\omega_2]^{\omega}$ such that :
    \begin{enumerate}
        \item $\rho[\sigma]\cap\omega_2 \subsetneq Z$.
        \item $\rho[\sigma]\cap\omega_1 = Z\cap\omega_1$.
        \item sup $Z>\alpha$.
        \item $p\Vdash \tau[Z^{<\omega}]\subseteq Z$.
    \end{enumerate}
    where $\rho : \R \rightarrow H_{\omega_2}\cap L(\R)$ is a surjection definable in $\langle L(\R)\cap H_{\omega_2},\in\rangle$.
    \\
    \\
     We apply the same process as with the main theorem but for the full $\Gamma$ using the fact that $\Gamma$ becomes wadge bound in $L(j_H(\Gamma),\R)$. Pick $t\in \Theta^\omega$ such that $\pi(t)$ is Suslin co-Suslin and wadge above every set of $\Gamma$
    define $X_\infty=\{j_\infty(s)\ |\ s\in {\Theta^\Gamma}^\omega\ is\ bounded\ in\ \Theta^\Gamma\}\cup \{j_\infty(t)\}$ (note that $\R\subseteq X_\infty$).
    \\
    \\
    Using the $\omega_2$ wellordering of $\R$ $\langle x_i,\ i\in \omega_2 \rangle$ given by $\pmax$, define a function $F: \omega_2^{<\omega} \rightarrow\omega_2$ such that if the wadge reduct of $\pi(t)$ by the function coded by $x_\alpha$ is $A_f\in\Gamma$, then for all $\bar{u},\ F(\alpha^\frown\bar{u})=f(\bar{u})$. We can pick $\tau$ a name for such an F ordinal definable from t.
    \\
    \\
    Applying the previous result in $L(j_H(\Gamma)^\infty,\R^\infty)$ for $j_\infty(t)$ and $X_\infty$ using the embedding $j_\infty$ we have :
    \\
    \\
    For all generic $g\subseteq HOD^{L(j_H(\Gamma)^\infty,\R^\infty)}(X_\infty)\cap \pmax$ and any $p_0\in j_\infty(\pmax)$ with $p_0<g$ and any $\alpha<j_\infty(\omega_2)$, there is $p<p_0$ and $Z\in j_\infty([\omega_2]^{\omega})$ such that :
    
    \begin{enumerate}
        \item $j_\infty (\rho)[\sigma]\cap \omega_2 \subsetneq Z$.
        \item $j_\infty (\rho)[\sigma]\cap j_\infty(\omega_1^{L(j_H(\Gamma),\R)})=Z\cap j_\infty(\omega_1^{L(j_H(\Gamma),\R)})$.
        \item sup $Z > \alpha$.
        \item $p\Vdash j_\infty (\tau)[Z^{<\omega}]\subseteq Z$.
    \end{enumerate}
    \bigskip

    Pick a set $U\subseteq \R$ in $L(\Gamma,\R)$ that codes a prewellordering of length $\omega_2$ and define $l(U)$ as the length of this ordering (hence in $L(\Gamma,\R)$ $l(U)=\omega_2$), with the same arguments as the main theorem we get in $L(j_H(\Gamma)^\infty,\R^\infty)$ :
    \\
    \\
    (X) For any $p_0\in j_\infty(\pmax)$ with $p_0<G$ and any $\alpha\in l(j_\infty(U))$, there is $p<p_0$ and $Z\in j_\infty(P_{\omega_1}(\omega_2)^{L(j_H(\Gamma),\R)})$ such that
    \begin{enumerate}
        \item $j_\infty [\omega_2^{L(j_H(\Gamma),\R)}] \subsetneq Z$.
        \item $\omega_1^{L(j_H(\Gamma),\R)}=Z\cap j_\infty(\omega_1^{L(j_H(\Gamma),\R)})$.
        \item sup $Z > \alpha$.
        \item for any $f$ coded by $A_f \in \Gamma$, $p\Vdash j_\infty (f)[Z^{<\omega}]\subseteq Z$.
    \end{enumerate}
    ~\\
    Since $L(j_H(\Gamma)^\infty,\R^\infty)$ is closed by countable sequences (X) holds in $L(j_H(\Gamma),\R)[G_\infty]$ and then also in $N[H][G_\infty]$ and then in $V[H][G_\infty]$ since those models have the same reals.
    \\
    \\
    Finally for any $A\in \Gamma$ choose a tree $T_A\in L(\Gamma,\R)$ such that in V $A=p[T_A]$ (possible by $\ADR$), then in V[H] $A=p[j_H(T_A)]$ since $j_H(A)=A$. First notice that $T^\infty_A=j_\infty(j_H(T_A))=j_\mu(j_H(T_A))\in L(j_H(\Gamma),\R)$, then $T^\infty_A$ represents A in $L(j_H(\Gamma),\R)$ and $j_\infty(A)=p[T^\infty_A]$ is just the generic interpretation of A in $V[H][G_\infty]$.
    \\
    \\
    Next in $V[H]$ the cardinality of $\Gamma$ is collapsed to $\omega_2=|\R|$, since in $V[H][G_\infty]$ $\omega_2$ is collapsed to $\omega$ there exists a countable enumeration of $\Gamma$. We can use it to merge $\sigma$-weakly homogeneous trees representing elements of $\Gamma$ into one $\sigma$-weakly homogeneous tree T and define B=p[T].
    \\
    \\
    Back in V set $M_0=\{f : \omega_2^{<\omega}\rightarrow\omega_2\ |\ f\in V\} \cup H_{\omega_2}$.
    \\
    \\
    Let T be the set of $X\in [M_0]^\omega$ such that for all $\alpha <\omega_2$ there is $Y\in [M_0]^\omega$ with :
    \begin{enumerate}
        \item $X\cap \omega_2 \subsetneq Y$.
        \item $X\cap\omega_1 = Y\cap \omega_1$.
        \item sup $Y > \alpha$.
        \item for each $f\in X, f : \omega_2^{<\omega} \rightarrow \omega_2$, Y is closed by $f$.
    \end{enumerate}
    ~\\
    If $SCC^{cof}$ fails then $S=[M_0]^\omega-$T is stationary, assume this is the case. Recall that $K\subseteq Q_{<\lambda}$ is such that H,$G_\infty\in V[K]$, note $j_K :V \rightarrow M$ the associated generic embedding.
    \\
    \\
    Then in the generic ultrapower M given by K, we have $j_K[M_0]\in j_K(S)$. This implies in M :
    \\
    \\
    There is $\alpha < \omega_2$ such that for any $Z\subseteq \omega_2$, if :
    \begin{enumerate}
        \item $j_K[\omega_2] \subsetneq Z$.
        \item $\omega_1=Z\cap j_K(\omega_1)$.
        \item sup $Z > \alpha$.
    \end{enumerate}
    Then there is f such that $j_K(f)[Z^{<\omega}]\not \subseteq Z$.
    \\
    \\
    But since V verifies $L(P(\R))=L(\Gamma,\R)[G]$ this implies in M : 
    \\
    \\
    (Y) There is $p_0\in j_K(G)$ such that $p_0<G$ and such that there is an $\alpha<j_K (\omega_2^{L(\Gamma,\R)})$ such that for all $Z\in j_K(P_{\omega_1}(\omega_2)^{L(\Gamma,\R)})$, if :
    \begin{enumerate}
        \item $j_K[\omega_2^{L(\Gamma,\R)}]\subsetneq Z$.
        \item $\omega_1^{L(\Gamma,\R)}= Z\cap j_K(\omega_1)^{L(\Gamma,\R)}$.
        \item sup $Z> \alpha$.
    \end{enumerate}
    then there is $p\in j_K(G)$ and $\tau$ coded by $A_\tau \in \Gamma$ such that
    \begin{enumerate}
        \item $p \Vdash j_K(\tau)[Z^{<\omega}]\not \subseteq Z$.
        \item $p<p_0$.
    \end{enumerate}
     
    To conclude, (X) can be expressed in $\langle{H_{\omega_1}}^{V[H][G_\infty]},B,\in \rangle$ by a formula with parameter G and (Y) in $\langle H_{\omega_1}^{V[K]},B^{*},\in \rangle$ by the negation of the same formula where quantification on $\omega_2$ is done using $j_K(U)$ and $B^{*}$ is the projection of T in V[K] (since the stationary tower sends a homogeneous set to its generic interpretation and $H_{\omega_1}^{V[K]}=H_{\omega_1}^{M}$).
    \\
    \\
    Since V[K] is a generic extension of $V[H][G_\infty]$ and B is weakly homogeneous we have
    $\langle{H_{\omega_1}}^{V[H][G_\infty]},B,\in \rangle \prec \langle H_{\omega_1}^{V[K]},B^{*},\in \rangle$, this is contradictory and thus implies that $SCC^{cof}$ must hold in V.

\end{proof}

Using this new result we can get an interesting relation with Woodin $(\ast)_{UB}$. Recall first its definition, we assume a ZFC context. We note $\Gamma^\infty$ for the class of universally baire sets of reals.

\begin{definition}
    The axiom $(\ast)_{UB}$ is the conjunction of the following statements :
    \begin{enumerate}
        \item There is a proper class of Woodin cardinals.
        \item $\Gamma^\infty=P(\R)\cap L(\Gamma^\infty,\R)$.
        \item $L(\Gamma^\infty,\R)\models \AD^+$.
        \item $L(P(\R))=L(\Gamma^\infty,\R)[G]$ for some filter $G\subseteq \pmax$.
    \end{enumerate}
\end{definition}

First remark a few things in the $(\ast)_{UB}$ context. Since there is proper class many Woodin cardinals, for any ordinal $\lambda$ there is a cardinal $\sigma>\lambda$ such that every set in $\Gamma^\infty$ is $\sigma$-weakly homogeneous. Also the proper class of Woodins imply that every set in $\Gamma^\infty$ has a scale in $\Gamma^\infty$, hence since $\Gamma^\infty=P(\R)\cap L(\Gamma^\infty,\R)$ every set in $L(\Gamma^\infty,\R)$ is Suslin in $L(\Gamma^\infty,\R)$, together with $\AD^+$ we get $L(\Gamma^\infty,\R)\models \ADR$. Finally since choice holds and $L(P(\R))=L(\Gamma^\infty,\R)[G]$ we have that $\omega_2-\DC$ holds in $L(\Gamma^\infty,\R)[G]$, this is equivalent to the $\Theta$ of $L(\Gamma^\infty,\R)$ being regular. We then immediately get : 

\begin{theorem}
    $(\ast)_{UB}$ implies $SCC^{cof}$.
\end{theorem}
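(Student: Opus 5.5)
The plan is to derive the statement directly from the preceding theorem, the first-order characterization: two Woodin cardinals $\lambda>\delta$, a class $\Gamma$ with $L(P(\R))=L(\Gamma,\R)[G]$ for a $\pmax$-generic $G$, $L(\Gamma,\R)\models \ADR + \Theta$ regular, $\Gamma=P(\R)\cap L(\Gamma,\R)$, and $\sigma$-weak homogeneity of all sets in $\Gamma$ for some $\sigma>\lambda$. We take $\Gamma=\Gamma^\infty$ and check each hypothesis in turn from the four clauses of $(\ast)_{UB}$, working in ZFC throughout.

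\textbf{Woodins and weak homogeneity.} Clause (1) gives a proper class of Woodin cardinals, so we pick any two $\lambda>\delta$. Universal Baireness in the presence of a proper class of Woodins yields, for every ordinal, a larger $\sigma$ such that each universally Baire set is $\sigma$-weakly homogeneous. Since $\Gamma^\infty$ is a set, we can take a single $\sigma>\lambda$ working uniformly, using the supremum over $\Gamma^\infty$ and the fact that homogeneity degrees can be pushed up. The equality $\Gamma=P(\R)\cap L(\Gamma,\R)$ is clause (2), and the decomposition $L(P(\R))=L(\Gamma^\infty,\R)[G]$ is clause (4). Genericity of $G$ over $L(\Gamma^\infty,\R)$ is part of the intended reading of clause (4), and if needed it follows from the standard $(\ast)$ analysis.

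\textbf{$\ADR$.} Under the large cardinals every universally Baire set admits a scale whose prewellorderings are universally Baire. By clause (2) these scales lie in $L(\Gamma^\infty,\R)$, so every set of reals there is Suslin in $L(\Gamma^\infty,\R)$. Combining this with $\AD^+$ from clause (3), the standard fact that $\AD^+$ plus ``all sets Suslin'' gives $\ADR$ yields $L(\Gamma^\infty,\R)\models\ADR$.

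\textbf{Regularity of $\Theta$.} Since $V\models\AC$ and $L(P(\R))$ contains every $\omega_2$-sequence of sets of reals, $\omega_2$-$\DC$ holds in $L(P(\R))=L(\Gamma^\infty,\R)[G]$. The cited theorem equating $\omega_2$-$\DC$ in the $\pmax$ extension with regularity of $\Theta$ (applicable since $\Gamma^\infty$ is closed under continuous preimages and $L(\Gamma^\infty,\R)\models\AD^+$) then gives that $\Theta^{L(\Gamma^\infty,\R)}$ is regular.

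\textbf{Main obstacle.} The delicate step is justifying that $\omega_2$-$\DC$ really transfers into $L(\Gamma^\infty,\R)[G]$. The point is that relevant $\omega_2$-sequences of elements of $L(P(\R))$ can be coded by sets of reals, since $|\R|=\omega_2$ after $\pmax$. I would argue via $P(\omega_2)\subseteq L(P(\R))$. Once all hypotheses are checked, the preceding theorem yields $SCC^{cof}$ directly.
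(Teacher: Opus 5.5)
Your proposal is correct and is essentially the paper's argument: instantiate the preceding theorem with $\Gamma=\Gamma^\infty$, get weak homogeneity from the Woodin cardinals, $\ADR$ from universally Baire scales plus $\AD^+$, and regularity of $\Theta$ from $\omega_2$-$\DC$ in $L(\Gamma^\infty,\R)[G]$. Your handling of the $\omega_2$-$\DC$ transfer is somewhat more careful than the paper's one-line assertion. One small fix: weak homogeneity is downward monotone in the degree, so taking a supremum is the wrong operation. It is also unnecessary, since with a proper class of Woodin cardinals every universally Baire set is $\kappa$-homogeneously Suslin for every $\kappa$, so any $\sigma>\lambda$ works uniformly.
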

~\\
$Acknowledgments$. The author thanks Grigor Sargsyan for his supervision and guidance throughout this research, and Hugh Woodin for his valuable input regarding the correction of an earlier proof of the main theorem.

\bibliographystyle{plain}
\bibliography{References}

\nocite{Wo10}
\nocite{coxcc}
\nocite{ADRUB}
\nocite{SargMu}
\nocite{ESSealing}
\nocite{La04}

\begin{figure}[h!]
    \centering
    \includegraphics[width=0.5\linewidth]{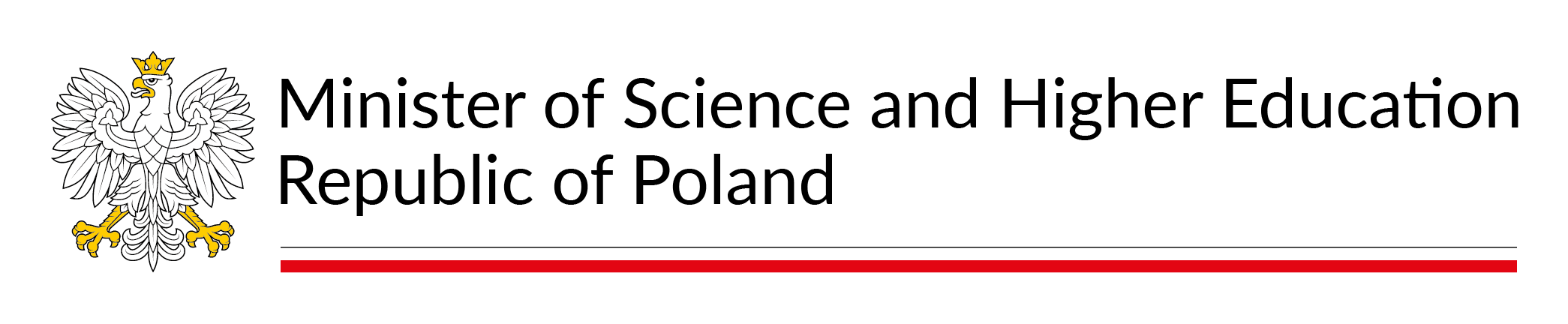}
    \label{fig:placeholder}
\end{figure}

\begin{figure}[h!]
    \centering
    \includegraphics[width=0.5\linewidth]{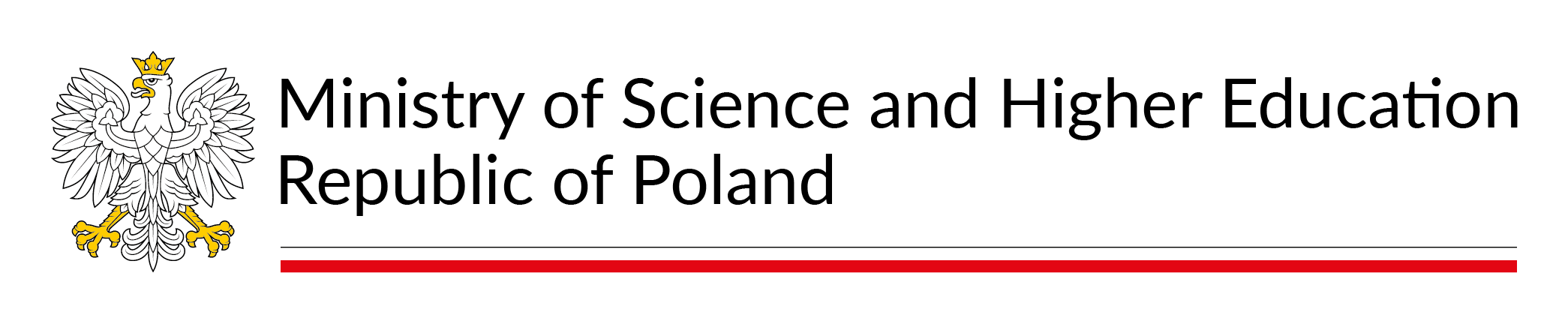}
    \label{fig:placeholder}
\end{figure}

\end{document}